\let\mathcal\mathscr
\numberwithin{equation}{section}
\newtheorem{theorem}{Theorem}[section] 
\newtheorem{lemma}[theorem]{Lemma}
\newtheorem{proposition}[theorem]{Proposition}
\newtheorem{corollary}[theorem]{Corollary}
\theoremstyle{definition}
\newtheorem*{acknowledgements}{Acknowledgements}
\newtheorem{remark}[theorem]{Remark}
\newtheorem{definition}[theorem]{Definition}
\newtheorem*{notation}{Notation}
\renewcommand{\phi}{\varphi}
\renewcommand{\leq}{\leqslant}
\renewcommand{\le}{\leqslant}
\renewcommand{\geq}{\geqslant}
\renewcommand{\c}{\mathbf{c}}
\renewcommand{\b}{\mathbf{b}}
\renewcommand{\r}{\mathbf{r}}
\newcommand{\Zp}{\mathbb{Z}_{\text{prim}}}
\DeclareSymbolFont{bbold}{U}{bbold}{m}{n}
\DeclareSymbolFontAlphabet{\mathbbold}{bbold}
\renewcommand{\P}{\mathbb{P}}
\newcommand{\Q}{\mathbb{Q}}
\newcommand{\R}{\mathbb{R}}
\newcommand{\Z}{\mathbb{Z}}
\renewcommand{\l}{\left}
\renewcommand{\r}{\right}
\renewcommand{\b}{\mathbf}
\renewcommand{\c}{\mathcal}
\renewcommand{\epsilon}{\varepsilon}
\renewcommand{\leq}{\leqslant}
\renewcommand{\geq}{\geqslant}
\title{Finite saturation for unirational varieties}  
\author{Efthymios Sofos}
\address{
Max Planck Institute for Mathematics\\
Vivatsgasse 7, Bonn, 53111, Germany}
\email{sofos@mpim-bonn.mpg.de}
 \author{Yuchao Wang}
\address{Department of Mathematics, Shanghai University, Shanghai, 200444, China}
\email{yuchaowang@shu.edu.cn}
\subjclass[2010]{11D45 (11D25)}
\date{\today}
\begin{document}

\begin{abstract}
We import ideas
from  
geometry 
to settle Sarnak's
saturation problem 
for a large class of algebraic varieties. 
\end{abstract}

\maketitle

\setcounter{tocdepth}{1}
\tableofcontents

\section{Introduction}
\label{introduction}
A topic of central importance in number theory regards
prime solutions of Diophantine equations.
A principal result 
in this area
is 
Vinogradov's three primes theorem~\cite{vinivedi},
proved via the Hardy--Littlewood method.
This approach has been extended
by Hua
to representations of integers by powers of primes,
see, for example,
the work of
Kawada and Wooley~\cite{MR1829558}
for further
developments
and references.
A recent prominent
work
is due to
Cook
and
Magyar~\cite{MR3279535}.
They obtained asymptotic
estimates
for the number of prime solutions
of
general
systems of Diophantine equations
under the assumption that the system has a large number of variables
compared to the degrees of the polynomials.
This method
has also been
utilised
in the subsequent work
of  Xiao--Yamagishi~\cite{xiya}
and Yamagishi~\cite{yama}.

An approach based on the circle method
demands that
the number of variables is rather large compared to the
degree. Thus when there are fewer variables,
investigations have focused on solutions with few prime factors,
a prototype result being that of Chen~\cite{MR0434997}
on the binary Goldbach problem, a result using the weighted sieve.
One can utilise
again the circle method
to cover cases where the number of
variables is moderately large,
see the
recent work of
Magyar--Titichetrakun~\cite{MR3656205},
Schindler--Sofos~\cite{scso}
and
Yamagishi~\cite{yamag}.
The natural barrier
for cancellation in exponential sums
prevents the
circle method
from working in a small number of variables;
for example,
in the case of hypersurfaces of degree 
exceeding $2$
it has never been used 
when the number of variables is less than twice the degree. 
In this realm one must necessarily
combine sieve techniques
with analytic methods other than the circle method.
There is only a small number of results
available in this range of variables.
Examples include
the work of
Marasingha~\cite{marasingha}
on homogeneous
ternary quadratic equations,
Wang's work
on the Fermat and the Cayley cubic surfaces
via universal torsors~\cite{MR3360328}
and the result of Liu and Sarnak~\cite{sarnak}
covering
non-homogeneous
ternary quadratic equations 
via the use of group actions
and the weighted sieve.
An important earlier work
is due to
Nevo
and 
Sarnak~\cite{MR2746350},
which covers
\textit{prime} entries of matrices
of fixed determinant.

Our aim in the present work is to
provide almost prime results
for varieties
in very few
variables
and with no underlying group structure.
We shall do so
by combining
sieve tools
with geometric arguments.
We first introduce the necessary notation.
Recall that the
function
$\Omega:\Z\to \mathbb{N} \cup \infty$
counting the
number of prime divisors with multiplicity
is defined via
\[
\Omega(m):=\sum_{p} \nu_p(m)
,\]
where $\nu_p$ is the standard $p$-adic
valuation,
the sum is over all primes $p$
and $\Omega(m)$ is infinite if and only if $m=0$.
We can extend $\Omega$ to
a function $\Omega_{\P^n(\Q)}$
defined on
$\P^n(\Q)$
in the obvious way,
namely
by finding for each $x\in \P^n(\Q)$
an element
$\b{x} \in \Zp^{n+1}$
with $x=[\b{x}]$
and letting
\[
\Omega_{\P^n(\Q)}
(x):=\Omega\l(
\prod_{i=0}^n
x_i\r)
.\]
Here
$\Zp^{n+1}$
stands for integer vectors $\b{x}\in \Z^{n+1}$
with $\gcd(x_0,\ldots,x_n)=1$;
it is easy to see that
$\Omega_{\P^n(\Q)}$ is well-defined.
\begin{definition}
[Saturation number]
\label{saturation}
Let $X\subset \P^n$ be
a variety defined over $\Q$.
The
saturation number
of $X$,
denoted by
$r(X)$,
is defined as the least
$r \in \mathbb{N} \cup \infty$
such that the set of
points
$x  \in X(\Q)$ with
$\Omega_{\P^n(\Q)}(x)\leq r$
forms a Zariski dense subset of $X$.
\end{definition}
This definition is essentially due to
Bourgain, Gamburd and Sarnak~\cite{bourg}.
Varieties $X$ with $X(\Q)$
being not Zariski dense will necessarily have
$r(X)=\infty$.
Let us note
that Zariski density is the geometric
analog of
the existence of infinitely many almost prime elements in a sequence of integers.
Indeed, if the Zariski density condition was relaxed
to existence of infinitely many points
$x  \in X(\Q)$ with
$\Omega_{\P^n(\Q)}(x)\leq r$ 
in the definition above
then the possible presence of (linear, for example)
subspaces defined over $\Q$
and contained in $X$ makes
the problem
more tractable, see~\cite{birch}. 
Recall that
a variety $X$
defined over $\Q$ is said to be
$\Q$-unirational
if there exists
a positive integer $m$
and
a dominant morphism
$\pi:\P^m \dashrightarrow X$
defined over $\Q$.
\medskip
\begin{theorem}
\label{unira}
Any smooth
projective
variety defined over $\Q$
has finite saturation
if it is
$\Q$-unirational.
\end{theorem}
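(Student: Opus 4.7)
The plan is to lift the saturation problem from $X$ to $\P^m$ via unirationality, reduce it to finding many primitive integer vectors at which an auxiliary polynomial takes almost-prime values, and then invoke a sieve. Fix a dominant rational map $\pi:\P^m\dashrightarrow X$ defined over $\Q$. After clearing denominators and removing any common polynomial factor one may represent $\pi$ as $\mathbf{y}\mapsto [f_0(\mathbf{y}):\cdots:f_n(\mathbf{y})]$ with $f_0,\ldots,f_n\in \Z[y_0,\ldots,y_m]$ homogeneous of a common degree $d\geq 1$ and without common polynomial factor, so that the indeterminacy locus $B=\{f_0=\cdots=f_n=0\}\subset\P^m$ has codimension at least $2$. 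For $\mathbf{y}\in \Zp^{m+1}$ outside the cone over $B$, put $g(\mathbf{y})=\gcd(f_0(\mathbf{y}),\ldots,f_n(\mathbf{y}))$; then $(f_i(\mathbf{y})/g(\mathbf{y}))_i$ is a primitive representative of $\pi([\mathbf{y}])\in X(\Q)$ and one computes
\[
\Omega_{\P^n(\Q)}(\pi([\mathbf{y}]))
= \Omega(F(\mathbf{y})) - (n+1)\Omega(g(\mathbf{y}))
\leq \Omega(F(\mathbf{y})),\qquad F:=\prod_{i=0}^n f_i.
\]
Hence it suffices to produce $r_0=r_0(n,m,d)\in\N$ and a set $S$ of primitive integer vectors, Zariski-dense in $\P^m$, such that $\Omega(F(\mathbf{y}))\leq r_0$ for every $\mathbf{y}\in S$.

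The second step supplies $r_0$ and $S$ by a sieve. Applied to the integer sequence $\{F(\mathbf{y}):\mathbf{y}\in [-Y,Y]^{m+1}\cap \Zp^{m+1},\,F(\mathbf{y})\neq 0\}$, a multi-dimensional weighted sieve of Diamond--Halberstam--Richert type yields a fixed $r_0$ depending only on $(m,n,d)$ together with a subset $S(Y):=\{\mathbf{y}:\Omega(F(\mathbf{y}))\leq r_0\}$ of cardinality $\gg Y^{m+1}/\log Y$. The required level of distribution of $F$ is essentially trivial in the multi-variable setting, since residue-class counts modulo $q$ are handled by orthogonality together with a volume estimate, while the local densities are controlled by the Lang--Weil bound for the $\FF_p$-varieties $\{\mathbf{y}\in\FF_p^{m+1}:F(\mathbf{y})=0\}$. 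A technically lighter alternative that avoids the multi-variable sieve is to freeze $m$ of the coordinates in generic position and apply Richert's one-variable weighted sieve to the resulting univariate polynomial in the last coordinate, then sum over the frozen coordinates.

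For the third step, any proper closed subvariety $V\subsetneq\P^m$ admits at most $O(Y^m)$ primitive integer representatives of height at most $Y$, because the affine cone over $V$ has dimension at most $m$. Compared with the lower bound $|S(Y)|\gg Y^{m+1}/\log Y$, this forces $S(Y)$ to escape every fixed $V$ for $Y$ large enough, so $S:=\bigcup_Y S(Y)$ is Zariski-dense in $\P^m$. Since $\pi$ is dominant, the image $\pi(S)\subset X(\Q)$ is Zariski-dense in $X$: if its Zariski closure were a proper subvariety $W\subsetneq X$, then $\pi^{-1}(W)$ would be a proper closed subvariety of $\P^m$ containing $S$, which is impossible. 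This delivers $r(X)\leq r_0$ and completes the argument.

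The main obstacle is the sieve input: verifying that $F$ satisfies the hypotheses of a weighted sieve, in particular obtaining uniform Lang--Weil-type bounds on the mod-$p$ reductions of $F$ despite possible reducibility or singular factors, and absorbing the primitivity constraint $\gcd(y_0,\ldots,y_m)=1$ via Möbius inversion without forfeiting the logarithmic savings demanded by the sieve. By contrast, the reduction via unirationality and the Zariski-density step are essentially formal and rely on no input beyond the definitions and classical counting estimates for integer points on subvarieties.
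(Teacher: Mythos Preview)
Your proposal is correct and its backbone matches the paper's proof: represent $\pi$ by integer forms $f_0,\ldots,f_n$, set $F=\prod_{i=0}^n f_i$, apply the Diamond--Halberstam weighted sieve to the values $F(\mathbf{y})$ over integer boxes, and conclude Zariski density of the resulting almost-prime points. The one genuine difference is in how Zariski density is extracted. You argue by cardinality: the sieve produces $\gg Y^{m+1}/(\log Y)^c$ points while any proper subvariety of $\P^m$ carries only $O(Y^m)$ integer points of height $\leq Y$, so the almost-prime set escapes every such subvariety, and dominance of $\pi$ pushes this forward to $X$. The paper instead runs the sieve in an \emph{arbitrary} small box $\mathcal{R}^{(0)}\subset\R^{m+1}$ and obtains density of almost-prime points in the real-analytic topology on $\pi(\P^m(\Q))$, which is a stronger conclusion that also yields Zariski density. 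Your route is slightly more economical for the bare statement of the theorem; the paper's buys real-analytic density for free.

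One imprecision to correct: your assertion that $r_0$ depends only on $(m,n,d)$ is false in general. The fixed divisor of $F$ (the largest integer dividing $F(\mathbf{y})$ for all $\mathbf{y}\in\Z^{m+1}$) can be arbitrarily large relative to $(m,n,d)$, and it contributes directly to $\Omega(F(\mathbf{y}))$; the paper's example $(m!)^2x_0x_1=x_2^2$ and its Theorem~\ref{explicit saturation} make this dependence on $\|F\|$ explicit. For the present theorem this is harmless, since you only need $r_0$ finite for the fixed map $\pi$, but you should state the dependence correctly.
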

\medskip
We shall prove this by using the dominant map to parametrise points and
then apply
 the weighted sieve to the forms associated to $\pi$.
Theorem~\ref{unira}
covers new
cases
where the number of variables is small.
We illustrate this with a few examples.
\medskip
\begin{corollary}
\label{cor:kolarary}
The following varieties have finite saturation:
\begin{itemize}
\item[({I})] Del Pezzo surfaces defined over $\Q$, of degree larger than $2$ and with at least one $\Q$-point;
\item[({II})] Smooth projective cubic hypersurfaces defined over $\Q$, of dimension larger than $1$ and with at least one $\Q$-point;
\item[({III})] Smooth projective hypersurfaces of odd degree $d\geq 5$, defined over $\Q$ and of dimension larger than $c=c(d)$;
\item[({IV})] Smooth projective quadric hypersurfaces defined over $\Q$, of positive dimension and with at least one $\Q$-point.
\end{itemize}
\end{corollary}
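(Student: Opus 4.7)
The plan is to deduce each item directly from Theorem~\ref{unira} by exhibiting $\Q$-unirationality of the varieties in the four families. The unifying strategy is: produce in each case a dominant rational map $\pi\colon\P^m\dashrightarrow X$ defined over $\Q$, either explicitly or by invoking classical geometric results; Theorem~\ref{unira} then supplies the saturation bound.

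For case (IV), given a smooth quadric $X\subset\P^n$ with a $\Q$-point $x_0$, I would use the classical stereographic projection from $x_0$: this is a birational isomorphism $\P^{n-1}\dashrightarrow X$ defined over $\Q$, showing in fact that $X$ is $\Q$-rational, hence $\Q$-unirational. For case (II), if $X\subset\P^n$ is a smooth cubic hypersurface of dimension $\geq 2$ with a $\Q$-point, I would invoke the classical construction of Segre (for $n=3$) and its generalisations (due to Segre/Manin/Koll\'ar for higher dimensions): one sweeps $X$ by lines through the given $\Q$-point, or uses a two-to-one cover from projective space via the secant construction, both of which can be done over $\Q$. For case (I), a del Pezzo surface of degree $d\geq 3$ with a $\Q$-point is $\Q$-unirational; for $d\geq 5$ it is in fact $\Q$-rational by classical constructions, for $d=4$ one projects from the rational point to realise it as an intersection of quadrics and applies a double projection, and for $d=3$ (the cubic surface case) one appeals to (II) since a degree $3$ del Pezzo surface is a smooth cubic surface in $\P^3$.

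For case (III), I would invoke the theorem of Harris--Mazur--Pandharipande ensuring that smooth hypersurfaces of degree $d$ in $\P^n$ are unirational once $n$ is larger than some $c(d)$, combined with an argument ensuring that the unirational parametrisation descends to~$\Q$. This is where the odd-degree hypothesis plays its role: for odd $d$, the auxiliary varieties arising in the Harris--Mazur--Pandharipande construction (roughly, complete intersections of lower-degree hypersurfaces in high-dimensional projective spaces) can be shown to have $\Q$-points, either via an elementary argument exploiting odd intersection numbers or via a fibration argument that reduces to the existence of a $\Q$-rational section, and this suffices to make the whole parametrisation defined over~$\Q$.

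The main obstacle is case (III): cases (I), (II), (IV) are either classical or reduce to (II), whereas for (III) one must carefully track the base field throughout the Harris--Mazur--Pandharipande construction and extract from the odd-degree assumption the existence of the necessary rational points on intermediate varieties. Once $\Q$-unirationality is established in all four settings, the corollary follows immediately from Theorem~\ref{unira}, since each listed variety is by hypothesis smooth and projective over~$\Q$.
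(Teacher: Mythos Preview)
Your overall strategy matches the paper exactly: reduce each case to $\Q$-unirationality and then invoke Theorem~\ref{unira}. For cases ({I}), ({II}) and ({IV}) your arguments are essentially those the paper uses (it cites Manin for ({I}), Koll\'ar for ({II}), and calls ({IV}) standard via $\Q$-rationality).

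The one substantive discrepancy is case ({III}). The paper does \emph{not} go through Harris--Mazur--Pandharipande; it cites Brandes~\cite[Th.~1.3]{brandes} directly, which establishes $\Q$-unirationality of smooth hypersurfaces of odd degree in sufficiently many variables over number fields. The paper even remarks separately that the Harris--Mazur--Pandharipande criterion is a result over algebraically closed fields, implicitly distinguishing it from what is needed here. Your proposal to start from HMP and then ``track the base field'' using the odd-degree hypothesis is in the right spirit, but the descent step you sketch (finding $\Q$-points on the auxiliary varieties in the HMP construction) is left vague and is not obviously routine; this is precisely the content supplied by Brandes, whose approach goes via the existence of $\Q$-linear subspaces on odd-degree hypersurfaces (ultimately a Birch-type input) rather than by descending the HMP parametrisation. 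So for ({III}) you should simply cite Brandes rather than attempt to rebuild the result from HMP.
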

\medskip
The proof of the corollary
can be inferred immediately
from
Theorem~\ref{unira}
and the fact that the varieties in the statement are 
$\Q$-unirational.
Indeed,
unirationality follows from
the work of
Manin~\cite{maninakos}(part ({I})),
Koll{\'a}r~\cite{kollar} (part ({II}))
and
Brandes~\cite[Th.1.3]{brandes}
(part ({III})).
It is a standard result that 
all varieties in
part ({IV})
are $\Q$-rational,
in particular, they are $\Q$-unirational,
thus are covered by
Theorem~\ref{unira}.

There are additional results
on
unirationality
of hypersurfaces,
see,
for example,
the work of
Marchisio~\cite{MR1769987}
and
Conte, Marchisio, Murre~\cite{MR2527970},
respectively for
quartic and quintic hypersurfaces of small dimension.
It is worth mentioning that over 
algebraically closed fields, 
unirationality 
is known to 
hold for  
general 
hypersurfaces
of dimension
sufficiently large 
compared to the degree.
In the same setting,
Harris, Mazur and Pandharipande~\cite{MR1646558}
provided explicit criteria 
for unirationality of smooth hypersurfaces.

\subsection{Effective saturation}
One would wish to have an explicit bound on the saturation number
$r(X)$ in
terms of $X$.
In the generality of Theorem~\ref{unira}
this bound must necessarily depend
on the embedding of $X$.
Indeed,
as the example
$(m!)^2 x_0 x_1=x_2^2, (m \in \mathbb{N})$,
reveals,
the saturation $r(X)$ can be rather large, 
however, one could still try 
to bound it in terms of the coefficients. 
This is achieved in our next result.
\medskip
\begin{theorem}
\label{explicit saturation}
Assume that
$X\subset \P^n$
is smooth,
defined over $\Q$ and $\Q$-unirational,
so that
there exists $m\in \mathbb{N}$,
forms $f_i \in \Z[x_0,\ldots,x_m]$
and a
dominant rational morphism
$\pi:\P^m \dashrightarrow X$
given by
$\pi=(f_0:\ldots:f_n)$.
Letting $f:=\prod_{i=0}^n
f_i$,
the saturation  $r(X)$ is
at most
\[ 
10^5
 \deg^7(f)
\log (2\|f\| )
,\]
where  $\|f\|$ is
the
maximum of the 
absolute values of the coefficients of
$f$. 
\end{theorem}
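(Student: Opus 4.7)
The strategy mirrors the proof of Theorem~\ref{unira} but with every constant tracked explicitly.

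First, use the parametrisation $\pi=(f_0:\ldots:f_n)$ to pull the problem back from $X$ to $\P^m$. For each $\mathbf{x}\in \Zp^{m+1}$ in the regular locus of $\pi$, the image $\pi(\mathbf{x})\in X(\Q)$ satisfies $\Omega_{\P^n(\Q)}(\pi(\mathbf{x}))\leq \Omega(f(\mathbf{x}))$, since after dividing out the common factor of $(f_0(\mathbf{x}),\ldots,f_n(\mathbf{x}))$ the coordinates of $\pi(\mathbf{x})$ still divide $f(\mathbf{x})$. It therefore suffices, for every sufficiently large $B$, to exhibit many $\mathbf{x}\in \Zp^{m+1}\cap [-B,B]^{m+1}$ for which
\[
\Omega(f(\mathbf{x}))\leq 10^5\deg^7(f)\log(2\|f\|).
\]

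Next, set up a weighted sieve for the sequence $\{f(\mathbf{x})\}_{\mathbf{x}}$. Three numerical inputs are required: (i)~the local densities $\rho(p)=\#\{\mathbf{x}\bmod p : p\mid f(\mathbf{x})\}$, controlled for primes $p$ outside a bad set by Lang--Weil-type bounds giving $\rho(p)/p^{m+1}=\deg(f)/p+O(\deg^2(f)/p^{3/2})$; (ii)~a level of distribution $\theta>0$, obtained from the equidistribution of $\mathbf{x}$ modulo squarefree $q\leq B^\theta$; (iii)~an $O(\deg(f)\log\|f\|)$ bound, via Hadamard's inequality, on the contribution of the bad primes (those dividing the content of $f$ or a resultant/discriminant of the $f_i$). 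With these inputs one invokes an explicit form of the Diamond--Halberstam--Richert weighted sieve of dimension $\kappa\leq\deg(f)$ to obtain, for large $B$,
\[
\#\bigl\{\mathbf{x}\in \Zp^{m+1}\cap[-B,B]^{m+1}:\Omega(f(\mathbf{x}))\leq r_0\bigr\}\gg_f \frac{B^{m+1}}{(\log B)^{\kappa}},
\]
where the explicit threshold $r_0$, assembled from $\kappa$, $\theta$ and the bad-prime contribution, is arranged to satisfy $r_0\leq 10^5\deg^7(f)\log(2\|f\|)$.

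To upgrade the resulting infinitude of sieve points to Zariski density of $\{\pi(\mathbf{x})\}\subset X$, observe that any proper closed subvariety $Y\subsetneq X$ has preimage $\pi^{-1}(Y)\subsetneq\P^m$ of dimension $\leq m-1$, whose integer points in $[-B,B]^{m+1}$ number at most $O_{Y}(B^m)$. This is dwarfed by the sieve lower bound as $B\to\infty$, so the sieve necessarily produces points $\mathbf{x}$ whose images $\pi(\mathbf{x})$ lie outside $Y$; since $Y$ was arbitrary, Zariski density follows.

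The main obstacle is the explicit form of the weighted sieve together with input~(iii): one must quantify every error term (including the exceptional divisibilities $p^2\mid f(\mathbf{x})$ which can violate the standard sieve axioms) and keep all numerical constants explicit, so that the various powers of $\deg(f)$ and factors of $\log\|f\|$ coming from the sieve input, the level of distribution, and the bad-prime count combine to fit under the stated threshold $10^5\deg^7(f)\log(2\|f\|)$.
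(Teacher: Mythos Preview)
Your plan matches the paper's approach: pull back via $\pi$, apply the Diamond--Halberstam weighted sieve with dimension $\kappa=\deg(F)$ to the squarefree part of $f$, and track all constants. The paper's Proposition~\ref{weighted step 0} and Theorem~\ref{weighted step} carry out exactly your steps (i), (ii) and the sieve application, with the $p^2$-divisibility issue handled via Lemma~\ref{lem:bhargavakos}.

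Two differences are worth noting. First, for the Zariski density step the paper takes a different route: rather than your global counting argument (comparing $B^{m+1}/(\log B)^\kappa$ against $O_Y(B^m)$ on proper subvarieties), it proves the stronger statement of density in the \emph{real analytic} topology by running the sieve in an arbitrary small box $\mathcal{R}^{(0)}$ around any target point. Your argument is cleaner for the stated theorem; the paper's buys real density for free but forces the sieve to be stated for general boxes. Second, your item (iii) is where most of the explicit work hides, and ``Hadamard's inequality'' undersells it: the paper bounds the fixed divisor $D=\gcd_{\mathbf{x}\in\Z^{m+1}}F(\mathbf{x})$ through a chain of lemmas (Lemmas~\ref{lem:ABC}--\ref{lem:waltzfordebby}) that reduce to one variable, invoke Stewart's bound on the number of roots of a polynomial modulo $W$, and then Mahler's discriminant inequality, yielding $D\le \|F\|^{2\deg F}\exp(6000\deg^6 F)$. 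This, not the sieve itself, is what produces the $\deg^7(f)\log(2\|f\|)$ shape of the final bound, so in a full write-up you would need to replace the Hadamard hand-wave with something of this kind.
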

It is possible that 
there exists a positive integer greater than $1$
that divides 
$f(\b{x})$
for every 
$\b{x}\in \Z^{m+1}$.
Therefore,
if one is allowed to make
linear
transformations to change the equations
defining $X$,
an improved saturation
can be obtained,
specifically, one depending solely on $\deg(f)$. 
\subsection{Improved saturation in the presence of a fibration with a $\Q$-section}
\label{s:usingsections}
Consider the
Fermat cubic surface
$X$,
given by
$\sum_{i=0}^3 x_i^3=0$.
It is well-known that it is 
$\Q$-birational to $\P^2$,
thus in particular it is $\Q$-unirational
and is therefore covered by 
Theorem~\ref{explicit saturation}.
One parametrisation was already given by Euler (see~\cite[\S 13.7]{MR2445243}),
however for our needs it will be more convenient to use the one 
given by
Elkies~\cite{elkies}.
More specifically,
he showed
that 
$X$
is dominated by
$\pi:\P^2 \dashrightarrow X$, where 
$\pi$
is given by 
$\pi=(f_0:\ldots:f_3)$
and 
\begin{align*}
&f_0=-(y_1+y_0)y_2^2 + (y_1^2+2y_0^2)y_2-y_1^3+y_0y_1^2-2y_0^2y_1-y_0^3,\\
&f_1=y_2^3 - (y_1+y_0)y_2^2 + (y_1^2+2y_0^2) y_2 + y_0y_1^2 - 2y_0^2y_1 + y_0^3,\\
&f_2=-y_2^3 + (y_1+y_0)y_2^2 - (y_1^2+2y_0^2) y_2 + 2y_0y_1^2 - y_0^2y_1 + 2y_0^3,\\
&f_3=(y_1-2y_0)y_2^2 + (y_0^2-y_1^2) y_2 + y_1^3 - y_0y_1^2 + 2y_0^2y_1 - 2y_0^3
.\end{align*} 
Thus Theorem~\ref{explicit saturation}
supplies us with the
estimate
$$r(X)\leq 
[10^5\cdot 12^7 \cdot (\log (2\cdot 2^4))] 
=(1.24\ldots) \times 10^{13}
.$$
The Elkies parametrisation 
provides $f_i$ with slightly
smaller coefficients
compared to the one given by Euler,
thus giving a 
smaller value for $\|f \|$
in Theorem~\ref{explicit saturation}
and hence 
a better value for $r(X)$.
When a subtler
geometric structure
is available then
one can hope to obtain an improved saturation number. 
Indeed, we shall see that
if a variety
can be covered
by
many
lower-dimensional
$\Q$-unirational
subvarieties
then we can
parametrise
the subvarieties
`uniformly'
and
apply sieve methods directly to them.
Potential
examples of such varieties are
those that are
equipped with a fibration that has a 
section over the base field. 
The approach of using sections to reduce the saturation 
is realised for a
class of varieties of dimension $2$
in our next result.
\begin{theorem}
\label{2 skew}
Every smooth cubic surface in
$\P^3_\Q$
that contains two rational skew lines
can be linearly
transformed over $\Q$
so that it has saturation at most $32$.
\end{theorem}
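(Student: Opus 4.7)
\emph{Strategy.} The plan is to exploit the two rational skew lines $L_1, L_2 \subset X$ to endow $X$ with a conic bundle structure possessing a rational section. Fixing one skew line, say $L_2$, the pencil of planes through $L_2$ cuts $X$ in $L_2$ plus a residual conic; the other line $L_1$ meets each such plane in a single point which, since $L_1 \cap L_2 = \emptyset$, must lie on the residual conic and not on $L_2$. This gives a rational section of the conic bundle $X \dashrightarrow \P^1$, and hence a birational parametrization $\P^1 \times \P^1 \dashrightarrow X$ by four coordinate forms of bidegree $(2,2)$. The bound $r(X) \leq 32$ will then emerge from a weighted sieve applied to these four forms.

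\emph{Setup and parametrization.} After a $\Q$-linear change of variables on $\P^3$ we may assume $L_1 = \{x_0 = x_1 = 0\}$ and $L_2 = \{x_2 = x_3 = 0\}$. The containments $L_1, L_2 \subset X$ force the cubic form $F$ defining $X$ to have the shape $F = x_0 G + x_1 H$ with $G, H$ quadratic and with the extra relation $F(x_0, x_1, 0, 0) \equiv 0$. The planes through $L_2$ form the pencil $\Pi_{[\alpha:\beta]} = \{\alpha x_2 + \beta x_3 = 0\}$; parametrizing each by $(x_0, x_1, s) \mapsto (x_0, x_1, -\beta s, \alpha s)$, the residual conic $C_{[\alpha:\beta]} = (X \cap \Pi_{[\alpha:\beta]}) - L_2$ acquires equation $\Phi_{[\alpha:\beta]}(x_0, x_1, s) = 0$, with no $s^2$-coefficient (this being forced by $L_1 \subset X$), whose coefficients of $x_0^2, x_0 x_1, x_1^2$ are linear in $(\alpha, \beta)$ and whose coefficients of $x_0 s, x_1 s$ are quadratic in $(\alpha, \beta)$. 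Sweeping lines through the section point $[0:0:1]$ in direction $[u:v]$ and taking the residual intersection yields a dominant rational map $\pi: \P^1_{(u,v)} \times \P^1_{(\alpha,\beta)} \dashrightarrow X$ given by four explicit forms $f_0, \ldots, f_3 \in \Z[u, v, \alpha, \beta]$, each of bidegree exactly $(2, 2)$ in the two pairs of variables, hence total degree $4$ in the four variables.

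\emph{Sieve step.} For any primitive $\b{x} = (u, v, \alpha, \beta) \in \Z^4$ in the domain of definition of $\pi$ we have
\[
\Omega_{\P^3(\Q)}(\pi(\b{x})) \leq \sum_{i=0}^{3} \Omega(f_i(\b{x})).
\]
I would then apply a weighted sieve (of the type used in the proof of Theorem~\ref{explicit saturation}, but with a finer accounting made possible by the low bidegrees) to the four forms $f_i$ viewed as quartics in four variables. Exploiting the $(2,2)$ bidegree structure to boost the level of distribution (one may, for example, sieve fibrewise in each $\P^1$-direction) yields a positive-density subset of $\b{x} \in \Z^4$ in a suitable box on which, simultaneously for each $i \in \{0, 1, 2, 3\}$, $\Omega(f_i(\b{x})) \leq 8$. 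Summing gives $\Omega_{\P^3(\Q)}(\pi(\b{x})) \leq 32$, and the dominance of $\pi$ ensures that the resulting set of images is Zariski dense in $X$, completing the proof.

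\emph{Main obstacle.} The hard part is the simultaneous sieve step above: one must establish a uniform level-of-distribution estimate and verify the sieve hypotheses for all four forms $f_i$ on a common set of $\b{x}$. In particular, one has to rule out fixed prime divisors of the $f_i(\b{x})$, which may persist even in the normal form where $L_1, L_2$ sit in the coordinate positions above. This is where the additional $\Q$-linear freedom becomes essential: beyond putting the lines in standard position one may further conjugate by a $\Q$-linear map stabilising $L_1$ and $L_2$ (for instance diagonal rescalings of the coordinates within each line) so as to clear any fixed prime obstructions from the $f_i$, enabling the weighted sieve to deliver the uniform bound $\Omega(f_i(\b{x})) \leq 8$ on a common positive-density subset.
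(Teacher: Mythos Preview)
Your geometric setup is essentially the same as the paper's: after placing the two skew lines at $\{x_0=x_1=0\}$ and $\{x_2=x_3=0\}$, the conic bundle with its obvious section yields a parametrisation of $X$ by four bidegree-$(2,2)$ forms in $(u,v)\times(\alpha,\beta)$. Where you diverge from the paper, and where the gap lies, is in the sieve step.

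The claim that a weighted sieve delivers, on a positive-density set, the \emph{simultaneous} bounds $\Omega(f_i(\b x))\leq 8$ for each $i$ is not something the weighted sieve produces: it bounds $\Omega\bigl(\prod_i f_i(\b x)\bigr)$, not the individual summands. Moreover, sieving directly in all four variables gives (via naive lattice-point counting) level of distribution only $\alpha<1/(m+1)=1/4$. With product $\prod_i f_i=\pm uv\alpha\beta L^2Q^2$ of total degree $16$ and square-free kernel $uv\alpha\beta LQ$ of degree $10$, the resulting $r$ from~\eqref{rr} comes out well above $32$. The ``bidegree boost'' you invoke is not a standard device, and you give no mechanism for it.

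The paper's key manoeuvre, which your proposal misses, is a \emph{two-stage} argument. One first fixes the base coordinates $(s,t)$ (your $(\alpha,\beta)$) to be a pair of primes in prescribed arithmetic progressions, chosen so that the fibre forms $G_{s,t}(u,v)=b(s,t)u+e(s,t)v$ and $H_{s,t}(u,v)=a(s,t)u^2+d(s,t)uv+f(s,t)v^2$ are primitive and satisfy suitable local conditions. This costs exactly $2$ prime factors. One then sieves only in the two fibre variables $(u,v)$, where the form $\Phi'=uvG_{s,t}H_{s,t}$ has degree $5$ and four irreducible factors, so the sieve dimension is $\kappa=4$. Crucially, for \emph{binary} forms one has Marasingha's level of distribution $\alpha$ arbitrarily close to $1/2$, and the weighted sieve then gives $\Omega(\Phi')\leq 16$. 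Since $x_0x_1x_2x_3=\pm st\,uvG_{s,t}^2H_{s,t}^2$, one obtains $\Omega(\prod x_i)\leq 2+\bigl(2\Omega(\Phi')-\Omega(uv)\bigr)\leq 2+30=32$. (When $H_{s,t}$ splits over $\Q$, the paper substitutes Green--Tao--Ziegler for the sieve and does even better.)

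In short: the parametrisation is right, but to reach $32$ you must \emph{decouple} the base and fibre directions, spend two primes on the base, and exploit the sharper binary-form level of distribution on the fibre; a direct four-variable sieve does not suffice.
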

We have allowed for linear transformations
in order to eliminate the effect of small prime factors in the saturation.
Such surfaces have a conic bundle with a section over $\Q$;
indeed, the fibers
correspond to 
the residual conics in the pencil of planes through one of the skew lines
and
the other skew line ensures that each of these conics has a $\Q$-point,
see~\cite{MR3512880}.
It must be pointed out
that Theorem~\ref{2 skew}
does not cover the Fermat cubic surface.
This surface was proved to have saturation at most $20$
in a recent work of Wang~\cite{MR3360328}.

In the
`section-approach'
one can sometimes
replace
the sieve tools by
results originating in additive combinatorics.
This has the advantage of providing an
almost best possible
saturation number.
For example, we shall
use work of Green--Tao
and
Ziegler
to show
that
the smooth
cubic
surface given by
\begin{equation}
\label{ter}
X: \
(x_0-6x_1)x_2^2+36x_1x_2x_3+36(x_0+6x_1)x_3^2=x_0^2x_2+216x_1^2x_3
\end{equation}
has saturation $$r(X)\leq 10,$$
see
Remark~\ref{rem:greentaozieg}.
This should be compared with the recent work of
Tsang and Zhao~\cite{MR3641655},
a special corollary of which is that for every sufficiently large integer $N$
satisfying certain necessary congruence conditions,
the Lagrange equation
$$N=x_0^2+x_1^2+x_2^2+x_3^2$$
has a solution $\b{x}\in \mathbb{N}^4$ 
with $$\Omega(x_0x_1x_2x_3)\leq 16.$$

\subsection{Improved saturation in the presence of a fibration without a $\Q$-section}
\label{s:quasisections}
In the setting of Theorem~\ref{2 skew},
the variety is equipped with a conic bundle
that has a section over the ground field,
i.e. can be covered by curves
which \textit{always} have a $\Q$-point.
It is therefore desirable to ask whether a geometric approach can still
provide
a satisfying
saturation number in cases where
$X$ has a fibration without a section
over $\Q$.
We answer this affirmatively
in our next result. 
\begin{theorem}
\label{threefold}
The Fermat cubic threefold
\[
\sum_{i=0}^4x_i^3=0
\]
has saturation
at most $42$.
\end{theorem}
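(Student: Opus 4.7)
The strategy is to fibre the Fermat cubic threefold $X\colon \sum_{i=0}^{4} x_i^3 = 0$ by conics and apply a weighted sieve uniformly across the family, adapting the method behind Theorem~\ref{2 skew} to the situation in which the natural fibration lacks a section over $\Q$. The starting point is a $\Q$-rational line on $X$, for example $L = \{(t:-t:s:-s:0)\}$; projecting away from $L$ yields a rational map $\pi\colon X \dashrightarrow \P^2$ whose fibre over $t \in \P^2$ is, after removing $L$, the residual conic $C_t$ appearing in the decomposition $X \cap \Lambda_t = L \cup C_t$, where $\Lambda_t \subset \P^4$ is the $2$-plane through $L$ corresponding to $t$. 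The coefficients of $C_t$ are explicit $\Q$-polynomials in $t$, so that $X$ acquires the structure of a conic bundle over $\P^2$.

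The main geometric obstacle is that $\pi$ does not admit a section over $\Q$, placing us in the setting of Section~\ref{s:quasisections} rather than that of Theorem~\ref{2 skew}: the two points of $L \cap C_t$ are in general only defined over a quadratic extension of $\Q(t)$. I would circumvent this with a \emph{quasi-section}, by which I mean a $\Q$-rational subvariety $S \subset X$ whose restriction $\pi|_S$ is dominant and finite of small degree $d$. A natural candidate is the intersection of $X$ with a carefully chosen auxiliary $\Q$-rational hyperplane, giving such a map with $d$ small and supplying, for $t$ in a Zariski-dense subset of $\P^2(\Q)$, an explicit $\Q$-point on $C_t$. Once such a $\Q$-point is available, $C_t$ admits a rational parametrisation $\P^1 \dashrightarrow C_t$ whose defining linear forms have coefficients polynomial in $t$ of degree bounded in terms of $d$.

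It then remains to apply the weighted sieve uniformly in $t$ to the pulled-back family of linear forms and to sum the resulting almost-prime count over $t$ in growing boxes in $\P^2(\Q)$. The hard part will be to establish the sieve estimates with sufficient uniformity in $t$ that the summation over the base goes through, and to control the $\Omega_{\P^n(\Q)}$-contribution coming simultaneously from the base parameters $t$ and from the choice of preimage under the quasi-section $S \to \P^2$. It is precisely this additional contribution that accounts for the increase of the final bound from $32$ in Theorem~\ref{2 skew} to $42$ in the present statement, corresponding heuristically to sieving on a handful of additional forms compared with the section-based argument.
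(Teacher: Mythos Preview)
Your high-level picture is right: the paper does fibre $X$ into conics over $\P^2$ by projecting from a $\Q$-line, and the absence of a $\Q$-section is the essential difficulty. But the mechanism you propose for producing $\Q$-points on the fibres is where the argument breaks down. A hyperplane section $S=X\cap H$ gives a degree-$2$ cover $\pi|_S\colon S\to\P^2$ (if $H\not\supset L$; if $H\supset L$ the image is only a line), and over a rational $t$ the two points of $S\cap C_t$ are in general a conjugate pair, not individual $\Q$-points. So your claim that the quasi-section ``supplies, for $t$ in a Zariski-dense subset of $\P^2(\Q)$, an explicit $\Q$-point on $C_t$'' is exactly the hard step, and you have not said how to do it; it only holds on the thin set where a discriminant is a rational square, and that thin set must be described explicitly for any sieve to bite.

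What the paper actually does is make this thin set completely explicit by a clever choice of coordinates. After the substitution $y_1=\tfrac{x_1+x_3}{2}$, $y_3=\tfrac{x_1-x_3}{2}$, etc., the threefold becomes $y_0^3+2y_1(y_1^2+3y_3^2)+2y_2(y_2^2+3y_4^2)=0$, and projection from the line $y_0=y_1=y_2=0$ gives fibres $Q_{r,s,t}\colon (r^3+2s^3+2t^3)x^2+6sy^2+6tz^2=0$. The line $L$ itself is the degree-$2$ multisection, and the two points $L\cap C_t$ are rational precisely when $-t/s$ is a rational square. The paper parametrises this thin set by $(r,s,t)=(p_1,-p_2^2,p_3^2)$ with $p_i$ prime, so that $(0,p_3,p_2)$ is a visible $\Q$-point on the fibre. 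Crucially, the paper does \emph{not} sieve uniformly over the base as you propose: given a real target point, it first uses Siegel--Walfisz to \emph{fix} primes $p_1,p_2,p_3$ in suitable progressions approximating the target, and then applies the weighted sieve (dimension $\kappa=6$, yielding $r=30$) to the single resulting form $F(u,v)=\tfrac{1}{7}\prod_{i=0}^4 f_i(u,v)$ in the fibre variables. The bound $42$ then comes from $30$ plus $\Omega(84\,p_1p_2^3p_3^4)=4+1+3+4=12$, the extra prime factors forced by the parametrisation --- not from ``a handful of additional forms'' in the sieve itself.
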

The method of proof consists of 
equipping the threefold 
with a section-less
conic bundle fibration over $\P^2$,
finding sufficiently many $\Q$-rational 
fibers,
and applying 
sieve machinery
to this thin subset of the fibers.
Progress on the arithmetic of cubic hypersurfaces of dimension
$3,4$ and $5$
has been 
sporadic,
see~\cite{MR3229043},~\cite{MR3361770} and the references therein
for progress in higher dimensions. 
Smooth 
cubic threefolds over $\Q$
that contain a line defined over $\Q$
have finite saturation by the second part of Corollary~\ref{cor:kolarary}.
To obtain a good bound on the saturation one can try to adopt the 
 approach
used in the proof of Theorem~\ref{threefold}
for every threefold in this family,
since they are always equipped with a conic bundle over $\P_\Q^2$.
We hope that the approach in the proof of 
Theorem~\ref{threefold}
will assist with future investigations 
regarding
Sarnak's saturation problem 
for varieties
$X$ defined over $\Q$
such that 
$X(\Q)$ is Zariski dense and $X$ is not $\Q$-unirational. 
For example,
Swinnerton-Dyer~\cite[\S 5]{MR0337951}
has given a way to describe curves of 
genus $0$
on
the K$3$ surface
\[x_0^4+x_1^4=x_2^4+x_3^4\]
and one might attempt 
to adopt
his method to
produce 
enough $\Q$-rational
curves of genus $0$
on quartic surfaces
with smaller Picard rank.
This would allow to parametrise
a large set of these curves
and then
follow the method in the proof of 
Theorem~\ref{threefold}.

\begin{notation}
\normalfont
For any functions
$f,g :[1,\infty) \to \mathbb{C},$
the equivalent notations
$f\!\left(x\r)\!=\!O_{\mathcal{S}}\left(g\!\left(x\r)\r),$
and
$f\!\left(x\r) \! \ll_{\mathcal{S}} \! g\!\left(x\r),$
will be used to denote the
existence of a positive constant
$\lambda,$ which depends at most
on the set of parameters $\mathcal{S}$
such that
for any
$x\geq 1$
we have
$\l|f\left(x\r)\r|\leq \lambda \l|g\left(x\r)\r|.$
As usual,
we denote
the M\"{o}bius function
by $\mu(n)$.
We denote the number of prime factors of $n$
without multiplicity
by 
$\nu(n)$.
We shall furthermore denote by $\text{Res}(f,g)$
the resultant of two integer  binary forms $f,g$,
and
by $\text{rad}\!\left(n\r)$
the square-free kernel of $n.$
Finally, the symbol $\|f\|$ is reserved for the
maximum absolute value of the coefficients of an integer polynomial $f$.
\end{notation}

\begin{acknowledgements}
\normalfont
We are indebted
to Tim Browning 
for suggesting the problems
that led to Theorems~\ref{unira},~\ref{explicit saturation}
and~\ref{threefold}.
Furthermore, 
we
wish to express our gratitude 
to
Jean-Louis Colliot-Th\'{e}l\`{e}ne,
Tim Browning and
Roger Heath-Brown
for several useful suggestions. 
This investigation was performed
while the 
first
author was supported by
the London Mathematical Society
via the
\texttt{150th Anniversary Postdoctoral Mobility Grant}
and the second author
was supported by NSFC (11601309).
\end{acknowledgements}

\section{Sieve preliminaries}
Before providing the proofs of our theorems let us record
certain auxiliary results.
We shall begin by stating a form of the weighted sieve.
Suppose that $\mathcal{A}$ is a subset of the integers
of cardinality $Y$
and that  
$\mathcal{P}$ is any set of primes 
which includes every sufficiently large prime. 
Our setting includes
a non-negative
multiplicative
arithmetic
function
$\omega$
that is 
supported on multiples of primes in
$\mathcal{P}$.
We define for integers $d$,
$\mathcal{A}_d:=\{a\in \mathcal{A}:d|a\}$
and we let $\mathcal{R}_d:=\#\mathcal{A}_d-\frac{\omega(d)}{d}Y$.
The following result appeared in
\cite[Th.11.1]{sievebook}
and in an earlier form in
\cite[Th. 0 \& Th.1]{applic}.
\begin{theorem}
[Diamond--Halberstam]
\label{appear}
Assume that there exist real constants
$\kappa>1$
and $A_1,A_2,A_3, A_4
\geq 2$
such that the following conditions are satisfied,
\[ a \in \mathcal{A}, p|a \Rightarrow p \in \mathcal{P},
\tag{A}\]
\[
0\leq \omega(p)<p \ \ \text{for all} \ p\in \mathcal{P},
\tag{B}\]
\[
\sum_{\substack{d < Y^\alpha (\log Y)^{-A_3} \\ p|d \Rightarrow p\in \mathcal{P}}}
\mu^2(d) 4^{\nu(d)} | \mathcal{R}_d| \le A_2\  Y/ \l(\log Y\r)^{\kappa+1}
\  \ \text{for some} \ 0<\alpha \leq 1,
\tag{C}\]
\[\text{there exists} \ \mu \in \R \ \text{such that} \
a \in \mathcal{A} \Rightarrow |a| \leq Y^{\alpha\mu},
\tag{D}\]
\[
\prod_{z_1 \le p < z} \left( 1 - \frac{\omega(p)}{p} \right)^{-1} \le
\left( \frac{\log z}{\log z_1}\right)^\kappa \left( 1 + \frac{A_1}{\log z_1}
  \right) \ \text{for} \ 2 \le z_1 < z.
\tag{E}\]
Then
there exists $\beta_\kappa\geq 2$
such that
whenever $u,v \in\R$ satisfy
$\alpha^{-1}<u<v,
\beta_\kappa<\alpha u$,
and
\[
\sum_{\substack{Y^{1/v}\leq p < Y^{1/u}\\
p\in \mathcal{P}}}
\#\mathcal{A}_{p^2}
\le A_4\  Y/ \l(\log Y\r)^{\kappa+1}
\tag{F},\]
then for any integer $r$
with
\begin{equation}
\label{r}
r>\alpha\mu u-1
+\frac{\kappa}{f_\kappa(\alpha v)}
\int_1^{v/u}
F_\kappa(\alpha v-s)\l(1-\frac{u}{v}s\r)
\frac{\mathrm{d}s}{s}
\end{equation}
we have
\[\#\{a \in \mathcal{A}: \Omega(a)\leq r\}
\gg
Y
\prod_{p<Y^{1/v}}\l(1-\frac{\omega(p)}{p}\r).
\]
The functions $f_\kappa,F_\kappa$
are defined as
the solutions to the delay-differential equations supplied in~\cite[Th.0]{applic}.
\end{theorem}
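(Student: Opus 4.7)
The plan is to combine the Diamond--Halberstam--Richert $\beta$-sieve of dimension $\kappa$ with Richert-style logarithmic weights. Set $z := Y^{1/v}$, $y := Y^{1/u}$, and write $V(z) := \prod_{p < z,\, p \in \mathcal{P}}(1 - \omega(p)/p)$. First I would apply the $\beta$-sieve lower bound of dimension $\kappa$ to $\mathcal{A}$ with sifting parameter $z$ and level of distribution $Y^\alpha$. Conditions (A), (B) and (E) supply the required hypotheses on the sifting set and on the density function $\omega$, while (C) is precisely the distribution hypothesis needed to absorb the aggregated remainder $\sum_d 4^{\nu(d)} |\mathcal{R}_d|$ into the main term. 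This yields
\[
S(\mathcal{A}, \mathcal{P}, z) \geq V(z) Y \{f_\kappa(\alpha v) + o(1)\},
\]
and $\beta_\kappa$ appears as the infimum of arguments above which $f_\kappa$ is strictly positive, which justifies the hypothesis $\beta_\kappa < \alpha u$.

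Next I would attach Richert-type weights. To each $a \in \mathcal{A}$ with $\gcd(a,P(z)) = 1$ assign
\[
w(a) := 1 - \frac{1}{\log y}\sum_{\substack{z \leq p < y\\ p \mid a}} \log p,
\]
where $P(z) := \prod_{p<z,\,p\in\mathcal{P}} p$. The key observation is that $w(a) > 0$ bounds the number of prime factors of $a$ lying in $[z,y)$; combined with the size bound $|a| \leq Y^{\alpha \mu}$ from (D) and the absence of prime factors below $z$, this forces $\Omega(a) \leq r$ for any $r$ satisfying~\eqref{r}. Condition (F) enters here precisely to handle the contribution of $a$ divisible by $p^2$ for some $p \in [z,y)$, which would otherwise spoil the combinatorial interpretation of $w$.

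The crux is to show $T := \sum_{a \in \mathcal{A},\,\gcd(a,P(z))=1} w(a) \gg V(z) Y$. The constant contribution is $S(\mathcal{A}, \mathcal{P}, z)$, handled above. For the subtracted term one needs an \emph{upper} bound on $S(\mathcal{A}_p, \mathcal{P}, z)$ for each $p \in [z, y)$. The $\beta$-sieve upper bound of dimension $\kappa$ supplies
\[
S(\mathcal{A}_p, \mathcal{P}, z) \leq \frac{\omega(p)}{p}\, V(z) Y \{F_\kappa(\alpha v - s) + o(1)\},\qquad s := \log p/\log z.
\]
Summing against $\log p/\log y$ and invoking Mertens' theorem (in the form $\sum_p \omega(p)\log p/p \sim \kappa \log(y/z)$, a consequence of (E)) converts the discrete sum into the integral $\int_1^{v/u} F_\kappa(\alpha v - s)(1-(u/v)s)\,\mathrm{d}s/s$ appearing in~\eqref{r}. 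Normalising by $V(z) Y f_\kappa(\alpha v)$, positivity of $T$ becomes exactly the stated inequality on $r$.

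The main obstacle I foresee is the construction and properties of $f_\kappa, F_\kappa$ for $\kappa > 1$: the Rosser--Iwaniec sieve is optimal only at $\kappa = 1$, so in general one needs the full Diamond--Halberstam--Richert machinery of coupled delay-differential equations, together with a truncation of the sieve weights at level $Y^\alpha/(\log Y)^{A_3}$ in order to exploit (C). Once the $\beta$-sieve apparatus is taken as given, the reduction from positivity of $T$ to the weighted inequality is essentially bookkeeping.
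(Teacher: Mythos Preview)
The paper does not supply a proof of this statement; it is quoted from Diamond--Halberstam \cite[Th.~11.1]{sievebook} and \cite[Th.~0 \& Th.~1]{applic}, with only the remark that condition~(F) is the relevant specialisation of condition $\mathbf{Q_0}$ in \cite{sievebook}. Your sketch follows exactly the argument in those references: couple the $\kappa$-dimensional $\beta$-sieve lower and upper bounds with Richert-type logarithmic weights, and reduce positivity of the weighted sum $T$ to the inequality on~$r$.

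There is, however, a slip in your choice of weight. The Richert weight attaches to each prime $p\in[z,y)$ the factor $w_p=1-\log p/\log y$, not $\log p/\log y$. With the substitution $s=\log p/\log z$ one has $w_p=1-(u/v)s$, and then condition~(E) converts $\sum_{z\le p<y}w_p\,(\omega(p)/p)\,F_\kappa(\alpha v-s)$ into
\[
\kappa\int_1^{v/u}F_\kappa(\alpha v-s)\Bigl(1-\frac{u}{v}s\Bigr)\frac{\mathrm{d}s}{s},
\]
which is the integral in~\eqref{r}. Your weight $w_p=\log p/\log y=(u/v)s$ would instead yield $\kappa(u/v)\int_1^{v/u}F_\kappa(\alpha v-s)\,\mathrm{d}s$, a different quantity. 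The combinatorial step bounding $\Omega(a)$ also hinges on the correct form: one writes
\[
\Omega(a)=\sum_{p\mid a,\,p\ge z}\Bigl(1-\frac{\log p}{\log y}\Bigr)+\sum_{p\mid a,\,p\ge z}\frac{\log p}{\log y},
\]
drops the non-positive summands (those with $p\ge y$) in the first sum so that it is controlled by the weight, and bounds the second by $\log|a|/\log y\le\alpha\mu u$ via~(D). With your $w(a)$ this decomposition is not available. Apart from this correction, the outline is sound and matches the literature.
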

Condition $(F)$
is required
to ensure
that higher prime powers have little effect
in the calculations yielding the value of $r$
given in~\eqref{r}.
This condition is missing from
\cite[Th. 0 \& Th.1]{applic}
but appears in a slightly more general form in~\cite[(11.2)]{sievebook} as condition $\b{Q_0}$.
It is later stated in the same tract
\cite[pg. 140]{sievebook}
that for the purpose of proving
\cite[Th.11.1]{sievebook},
condition $\b{Q_0}$
is only needed to ensure condition $(F)$
as given in Theorem~\ref{appear}.

We shall furthermore use the bound
\begin{equation}
\label{beta}
\beta_\kappa\leq 3.75 \kappa,
\end{equation}
valid for all $\kappa>1$ and
proved in
Theorem $17.2$
and Proposition $17.3$
in~\cite{sievebook}.
Let us note that, as shown in~\cite[pg. 146]{sievebook},
when
$v=\beta_\kappa u$
then the quantity
\begin{equation}
\label{rr}
\mu-1
+(\mu-\kappa)(1-1/\beta_\kappa)+(\kappa+1)\log \beta_\kappa
\end{equation}
is an upper bound for the right
side of~\eqref{r}
and thus can be used in its place
in Theorem~\ref{appear}.

While the weighted sieve
is useful in situations where $\mathcal{A}$
is composed of values assumed by general integer forms,
in the special case that all
forms are linear
one can do significantly better
due to the groundbreaking work
of Green, Tao and Ziegler~\cite[Cor. 1.9]{t1},~\cite{t2} and~\cite{t3}.
We shall later use the following very special case of their work. 
\begin{theorem}[Green, Tao, Ziegler] \label{gtz}
Let $L_1,\ldots,L_5\in\Z[u,v]$ be
linear forms
which are
pairwise non-proportional
and
assume that for each prime $p$ there exists $(u_0,v_0)\in\Z^2$
such that
$p$ does not divide $L_i(u_0,v_0)$ for any $i=1,\ldots,5$.
Let $\mathcal{R}$ be a box of
$\mathbb{R}^2$
containing a point
$(u_0,v_0)\in\Z^2$
such that $L_i(u_0,v_0)>0$ for $i=1,\ldots,5$.
Then there exist infinitely
many
pairs
$(u,v)\in \mathcal{R}\cap\Z^2$ such that $L_i(u,v)$
are all prime.
\end{theorem}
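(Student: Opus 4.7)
The plan is to invoke the Green--Tao--Ziegler framework for counting prime configurations along linear forms. Write $\mathcal{R}_N$ for a suitable dilate of $\mathcal{R}$ of scale $N$, and consider the weighted counting function
\[
T(N):=\sum_{(u,v)\in \mathcal{R}_N \cap \Z^2} \prod_{i=1}^5 \Lambda(L_i(u,v)),
\]
where $\Lambda$ is the von Mangoldt function. Showing that $T(N)\to\infty$ as $N\to\infty$ implies the desired infinitude of prime $5$-tuples.

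First, I would verify that the Hardy--Littlewood heuristic predicts a positive main term. This main term has the form
\[
T(N)\sim \mathfrak{S}\cdot \mathrm{vol}_{\infty}(N),\qquad \mathfrak{S}:=\prod_p \beta_p,
\]
with local $p$-adic densities $\beta_p$ measuring the proportion of residues modulo $p$ at which no $L_i$ vanishes (after the usual $p/(p-1)$ normalisation), and an archimedean factor $\mathrm{vol}_\infty(N)$ recording the area inside $\mathcal{R}_N$ where all the $L_i$ are positive. The non-vanishing hypothesis mod $p$ gives $\beta_p>0$ for every $p$, while pairwise non-proportionality of the $L_i$ yields $\beta_p=1+O(p^{-2})$ for large $p$, so $\mathfrak{S}$ converges to a strictly positive value. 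The hypothesis that $\mathcal{R}$ contains a point on which all $L_i$ are positive, combined with openness of that condition, ensures $\mathrm{vol}_\infty(N)\to \infty$.

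Second, to pass from heuristic to truth, I would decompose $\Lambda=\Lambda^\sharp+(\Lambda-\Lambda^\sharp)$, where $\Lambda^\sharp$ is a Selberg majorant-type or $W$-tricked approximant adapted to this system. Expanding the fivefold product, the contribution of each factor in which $\Lambda$ is replaced by $\Lambda^\sharp$ reproduces the Hardy--Littlewood main term above, while every remaining error is estimated via the generalised von Neumann theorem in terms of Gowers uniformity norms $\|\Lambda-\Lambda^\sharp\|_{U^{s+1}}$, where $s$ is the Cauchy--Schwarz complexity of the system; for five pairwise non-proportional linear forms in two variables, $s$ is a small explicit integer.

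The hard part is the final step: controlling $\|\Lambda-\Lambda^\sharp\|_{U^{s+1}}$. This requires the inverse theorem for the Gowers norms combined with the M\"obius--nilsequence orthogonality estimates and quantitative equidistribution on nilmanifolds, which together form the core of the Green--Tao--Ziegler program~\cite{t1,t2,t3}. Rather than reprove this deep machinery, one invokes it as a black box; the positivity of $\mathfrak{S}$ and of the archimedean factor established in the first step then yields the asserted infinitude of pairs $(u,v)\in \mathcal{R}\cap \Z^2$ with $L_1(u,v),\ldots,L_5(u,v)$ simultaneously prime.
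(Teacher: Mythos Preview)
The paper does not prove this theorem at all: it is stated as a black-box citation of a ``very special case'' of the Green--Tao--Ziegler work~\cite[Cor.~1.9]{t1},~\cite{t2},~\cite{t3}, and is then simply applied in Lemma~\ref{666}. So there is no ``paper's own proof'' to compare against; the authors are using the result off the shelf, exactly as you yourself propose to do in your final paragraph.

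Your sketch of the underlying GTZ machinery (the $W$-tricked decomposition $\Lambda=\Lambda^\sharp+(\Lambda-\Lambda^\sharp)$, the generalised von Neumann reduction to Gowers norms, and the inverse theorem combined with M\"obius--nilsequence orthogonality) is an accurate high-level summary of how~\cite{t1,t2,t3} together establish results of this type, and your verification that the singular series $\mathfrak{S}$ is positive under the stated local hypotheses is the correct way to see that the main term does not vanish. One small point worth flagging: the theorem as stated in the paper speaks of a fixed box $\mathcal{R}$ yet asserts infinitely many integer points inside it, which is literally impossible for a bounded box; the intended meaning (visible from the application in Lemma~\ref{666}) is that one works with dilates $B\mathcal{R}$ as $B\to\infty$, and you have silently corrected this by introducing $\mathcal{R}_N$. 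That interpretive step is right, but it would be worth saying explicitly that you are reading the statement this way.
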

Lemmas~\ref{lem:ABC}-\ref{lem:waltzfordebby} 
will be used 
in
the proof of Theorem~\ref{explicit saturation}
to make  the dependence of the saturation on the 
small prime factors completely explicit in terms of the underlying polynomials.
\begin{lemma}
\label{lem:ABC}
Assume that $a,b \in \R_{\geq 1}$
and that a positive integer $m$ satisfies 
\[
m\leq a
b^{\nu(m)}
.\]
Then we have 
$m\leq 
16
a^2
\exp(b^6)$.
\end{lemma}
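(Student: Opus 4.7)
The plan is to set $k:=\nu(m)$ and combine the hypothesis $m\le a b^{k}$ with an elementary primorial lower bound. Using $p_i\ge i$ one gets $m\ge p_1 p_2\cdots p_k \ge k!$, and Stirling yields $k!\ge (k/e)^k$. Feeding this into the hypothesis gives the master inequality
\[
\left(\frac{k}{eb}\right)^{k}\le a.
\]

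The case $b=1$ is immediate since then the hypothesis reduces to $m\le a\le 16 a^2 \exp(b^6)$, so I would assume $b>1$ and split on the size of $k$ relative to the threshold $b^{6}/\log b$.

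In the subcase $k\le b^{6}/\log b$, the threshold is calibrated precisely so that $b^{k}\le \exp(b^6)$, from which $m\le a b^{k}\le a\exp(b^6)\le 16 a^2 \exp(b^6)$ follows at once (using $a\ge 1$). In the subcase $k> b^{6}/\log b$, I would first establish the elementary inequality $b^{6}/\log b> e b^{2}$ on $(1,\infty)$: the auxiliary function $g(b):=b^{4}-e\log b$ has $g(1)=1$ and derivative $4b^{3}-e/b$, which is positive for $b\ge 1$ since $4b^{3}\ge 4>e\ge e/b$; hence $g(b)\ge 1$, i.e.\ $b^{4}>e\log b$, and dividing by $\log b>0$ then multiplying by $b^{2}$ gives the claim. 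Consequently $k> eb^{2}$, so $k/(eb)>b$. Raising to the $k$-th power and invoking the master inequality,
\[
b^{k}<\left(\frac{k}{eb}\right)^{k}\le a,
\]
so $m\le a b^{k}\le a^{2}\le 16 a^2\exp(b^6)$.

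The main delicate point is the choice of threshold: $b^{6}/\log b$ has to simultaneously (i) produce the clean saturation $b^{k}\le \exp(b^{6})$ in the first subcase, and (ii) in the second subcase force $k/(eb)>b$ so that the master inequality absorbs the entire factor $b^{k}$ into $a$. The exponent $6$ is exactly what makes the comparison $b^{6}/\log b> eb^{2}$ hold uniformly for $b>1$; any smaller power would not leave enough room to swallow the $eb^{2}$ term that arises naturally from Stirling's bound on $k!$.
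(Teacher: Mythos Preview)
Your proof is correct and takes a genuinely different route from the paper's. The paper splits instead on whether $b^{\nu(m)}\le m^{1/2}$: if so, the hypothesis yields $m\le a^2$ directly; if not, it feeds $\log m<2(\log b)\nu(m)$ into Robin's explicit bound $\nu(m)\le 3(\log m)/(\log\log m)$ (valid once $\log\log m>1$, the complementary range giving $m\le e^e\le 16$) to conclude $\log\log m\le 6\log b$, i.e.\ $m\le\exp(b^6)$. Your argument replaces the citation of Robin by the elementary primorial estimate $m\ge k!\ge(k/e)^k$, obtaining the master inequality $(k/(eb))^k\le a$ and then splitting on $k\lessgtr b^6/\log b$. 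The trade-off: your approach is entirely self-contained, needing no external reference, while the paper's makes the provenance of the exponent $6$ slightly more visible as the product $2\times 3$ of the two constants in play. Both routes land on the same bound with the same constants.
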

\begin{proof}
If $b^{\nu(m)}\leq m^{1/2}$ holds,
then the assumption of the lemma ensures that $m\leq a^2$, which is sufficient for the proof.
If $b^{\nu(m)}\leq m^{1/2}$ fails, then 
$\log m \leq 2 (\log b)
\nu(m) $.
We are free to assume 
that $\log \log m>1$,
since otherwise we
have 
$m\leq e^e\leq 16$ and the lemma holds.
Then~\cite[Eq.(30)]{robin}
guarantees that 
$\nu(m)\leq 3 (\log m) (\log \log m)^{-1}$,
hence 
\[\log m
\leq 2 (\log b)
 3 (\log m) (\log \log m)^{-1}
.\]
Thus we obtain
$m\leq \exp(b^6)$,
which concludes the proof.
\end{proof}
For an integer polynomial of degree at least $2$
we denote its discriminant by $D_f$. 
\begin{lemma}
\label{lem:mahler}
Assume that $f\in \Z[x]$
is a non-constant polynomial
with non-zero 
discriminant
and of degree at least $2$.
Then the set 
\[
\{W\in \mathbb{N}: 
x\in \Z\Rightarrow 
W \text{ divides }
f(x)
\}
\]
is bounded.
Furthermore,
every
element in this set 
is at most 
\begin{equation}
\label{eq:johncoltrane}
16 D_{f} \exp(4096 (\deg(f))^6) 
.\end{equation}
\end{lemma}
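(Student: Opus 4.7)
The plan is to bound every admissible $W$ via the \emph{fixed divisor} $d(f):=\gcd_{x\in\Z}f(x)$: since $W$ divides every value of $f$, it automatically divides $d(f)$, so it suffices to bound $d(f)$. Note $d(f)$ is itself finite because $f$ is non-constant and hence takes some non-zero integer value, so the boundedness of the set will follow from any explicit upper bound.

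First I would expand $f$ in the binomial basis, $f(x)=\sum_{k=0}^{n}b_{k}\binom{x}{k}$ with $n=\deg f$, where $b_k=\Delta^k f(0)$ are the iterated forward differences of $f$ at $0$. Because each $b_k$ is a $\Z$-linear combination of $f(0),\dots,f(n)$ and conversely every $f(x)$ is a $\Z$-linear combination of the $b_k$, one obtains the identity $d(f)=\gcd_k b_k$. Second, I would pass from the binomial to the monomial basis via Stirling numbers of the second kind: $b_k=k!\,M_k$ with $M_k:=\sum_{j=k}^n S(j,k)\,a_j$. The map $(a_j)\mapsto(M_k)$ is unitriangular over $\Z$ (since $S(k,k)=1$) and therefore has an integer inverse, so $\gcd_k M_k=\gcd_j a_j=\mathrm{cont}(f)$. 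Choosing for each prime $p$ an index $k_p\leq n$ with $v_p(M_{k_p})=v_p(\mathrm{cont}(f))$ then yields $v_p(d(f))\leq v_p(k_p!)+v_p(M_{k_p})\leq v_p(n!)+v_p(\mathrm{cont}(f))$, whence $d(f)$ divides $n!\cdot\mathrm{cont}(f)$.

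Finally, I would bound $\mathrm{cont}(f)$ by $|D_f|$ by comparing the discriminant formula $D_f=a_n^{2n-2}\prod_{i<j}(\alpha_i-\alpha_j)^2$ applied to $f$ and to its primitivisation $\widetilde f:=f/\mathrm{cont}(f)$; this gives $D_f=\mathrm{cont}(f)^{2n-2}\,D_{\widetilde f}$, and since both discriminants are non-zero integers and $n\geq 2$ one infers $\mathrm{cont}(f)\leq |D_f|$. Combined with the trivial bound $n!\leq n^n\leq\exp(4096\,n^6)$ valid for $n\geq 2$, this yields $W\leq d(f)\leq n!\,\mathrm{cont}(f)\leq 16\,|D_f|\,\exp\bigl(4096(\deg f)^6\bigr)$, completing the proof.

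The chief subtlety is step two, where one must verify that the transition matrix between $(a_j)$ and $(M_k)$ is integer unitriangular so as to extract the sharp identity $\gcd_k M_k=\mathrm{cont}(f)$; the remainder is a direct $p$-adic accounting and a power-of-$n$ crude estimate. A looser but formally similar route would start from the weaker bound $W\mid n!\,a_n$ obtained from the $n$-fold difference $\Delta^n f=n!\,a_n$ and invoke Lemma~\ref{lem:ABC} to absorb the contribution of the small primes through the bound on $\nu(W)$; the binomial-basis approach above is cleaner and avoids this extra machinery entirely.
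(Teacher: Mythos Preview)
Your argument is correct and in fact rather cleaner than the paper's. The paper proceeds by writing $f=c_0f_0$ with $c_0=\mathrm{cont}(f)$, then invokes Stewart's explicit bound on the number of roots of $f_0(x)\equiv 0\pmod{W_0}$ to obtain an inequality of the shape $W_0\leq D_{f_0}^{1/2}(4\deg f)^{\nu(W_0)}$, and finally appeals to Lemma~\ref{lem:ABC} to convert the factor $(4\deg f)^{\nu(W_0)}$ into the exponential $\exp(4096(\deg f)^6)$. Your route via the binomial basis and the P\'olya--Ostrowski description of the fixed divisor is entirely elementary: it replaces both the external input from~\cite{MR1119199} and the use of Lemma~\ref{lem:ABC} with the single divisibility $d(f)\mid n!\cdot\mathrm{cont}(f)$, after which the discriminant relation $D_f=\mathrm{cont}(f)^{2n-2}D_{\tilde f}$ (also used in the paper) and the crude bound $n!\leq\exp(4096\,n^6)$ finish the job. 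The paper's approach has the virtue of showcasing a general congruence-counting mechanism, but for this particular lemma your argument is shorter, self-contained, and yields a bound that is at least as strong; the factor $16$ in the statement is in fact unnecessary in your version.
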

\begin{proof}
In the case $\deg(f)=1$ it is easy to see that 
for every prime $p$ we have 
\[\nu_p(W)\leq \min\{\nu_p(f(0)),\nu_p(f'(0))\},\]
which
is sufficient
due to
$\|f\|=\max\{|f(0)|,|f'(0)|\}$.
Hence we can assume that $\deg(f)\geq 2$
for the rest of the proof.
First, write $f(x)=c_0 f_0(x)$,
where $c_0\in \Z\setminus \{0\}$ is the greatest common factor of the coefficients of $f$.
Every integer $W$ in the set of the lemma
must satisfy
\begin{equation}
\label{eq:mustsatf}
x\in \Z\Rightarrow
\frac{W}{\gcd(c_0,W)} \mid f_0(x)
.\end{equation}
Letting 
$W_0:=
W/\gcd(c_0,W)
$
we observe that
$D_{f_0}\neq 0$,
thus the work of 
Cameron~\cite[Eq.(43)]{MR1119199}
reveals that 
the number of solutions of 
\[
f_0(x)\equiv 0 \mod{W_0}
\]
is at most
\[
\prod_{p \mid W_0} \l(2 p^{\nu_p(D_{f_0})/2}+\deg(f)-2\r)
.\]
By~\eqref{eq:mustsatf}
the number of solutions is also equal to
$W_0$,
therefore we have
\[
W_0\leq \prod_{p \mid W_0} \l(2 p^{\nu_p(D_{f_0})/2}+2\deg(f)\r)
\leq 
D_{f_0}^{1/2}
(4 \deg(f))^{\nu(W_0)}
.\]
Combining the last inequality 
with Lemma~\ref{lem:ABC}
yields the following bound,
\[
W_0
\leq 16 D_{f_0}
\exp(4096 (\deg(f))^6)
.\]
Recall that the discriminant of a polynomial $g$
is an integer form in its coefficients that has degree 
$2(-1+\deg(g))$.
This shows that, in light of
$f=c_0 f_0$,
one has 
\[
D_{f}=c_0^{2(-1+\deg(f))}
D_{f_0}
,\]
therefore 
\[
W=
\gcd(c_0,W) W_0
\leq 
c_0 W_0 
\leq 16 
c_0^{3-2\deg(f)}
D_{f}
\exp(4096 (\deg(f))^6) 
.\]
Noting that 
$\deg(f)\geq 2$
guarantees that  
$c_0^{3-2\deg(f)}\leq 1$
concludes the proof of our lemma.
\end{proof}
We wish to obtain a version of 
Lemma~\ref{lem:mahler}
that is valid for polynomials in many variables.
To do so we shall assign small integer values to all variables except one and apply 
Lemma~\ref{lem:mahler}.
We first deduce a weaker, but more useful,
version of Lemma~\ref{lem:mahler}. 
\begin{lemma}
\label{lem:mmahl}
Assume that $f\in \Z[x]$
is a non-constant polynomial with non-zero discriminant.
Then every element in the set 
\[
\{W\in \mathbb{N}: 
x\in \Z\Rightarrow 
W \text{ divides }
f(x)
\}
\]
is bounded by
\[
\|f\|^{2\deg(f)}
 \exp(5000 \deg^6(f)) 
.\]
\end{lemma}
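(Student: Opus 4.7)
The plan is to reduce Lemma \ref{lem:mmahl} to Lemma \ref{lem:mahler} by controlling the discriminant $D_f$ in terms of the coefficient-height $\|f\|$ and the degree.

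If $\deg(f)=1$, write $f(x)=ax+b$ with $a\neq 0$; any $W$ dividing every value $f(x)$ must divide $f(0)=b$ and $f(1)-f(0)=a$, hence $W\leq \gcd(|a|,|b|)\leq \|f\|$, which is well within the claimed bound. So we may henceforth assume $d:=\deg(f)\geq 2$ and invoke Lemma \ref{lem:mahler}, thereby reducing the problem to bounding $|D_f|$ suitably in terms of $\|f\|$ and $d$.

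For this we use the standard identity $\mathrm{Res}(f,f')=(-1)^{d(d-1)/2}a_d D_f$, where $a_d$ is the leading coefficient. Since $|a_d|\geq 1$, we have $|D_f|\leq |\mathrm{Res}(f,f')|$, and the resultant equals the determinant of the $(2d-1)\times (2d-1)$ Sylvester matrix whose rows are shifted copies of the coefficient vectors of $f$ and $f'$. Each of the $d-1$ rows arising from $f$ has Euclidean norm at most $\sqrt{d+1}\,\|f\|$, while each of the $d$ rows arising from $f'$ has Euclidean norm at most $d^{3/2}\,\|f\|$ (using $\|f'\|\leq d\|f\|$). Hadamard's inequality then yields
\[
|D_f|\leq (d+1)^{(d-1)/2}\, d^{3d/2}\, \|f\|^{2d-1}.
\]

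Inserting this bound into Lemma \ref{lem:mahler} and using $\|f\|\geq 1$ to promote the exponent from $2d-1$ to $2d$, one is left with the elementary inequality $16\cdot (d+1)^{(d-1)/2}\, d^{3d/2}\leq \exp(904\, d^6)$, which is valid for all $d\geq 1$; the claimed estimate $\|f\|^{2\deg(f)} \exp(5000\, \deg^6(f))$ then drops out. The only real obstacle is numerical bookkeeping, and the gap between the exponent $2\deg(f)$ in the lemma and the $2\deg(f)-1$ coming from Hadamard is exactly the room needed to absorb the degree-dependent prefactor, so no further idea is required.
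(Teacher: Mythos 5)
Your proof is correct. The overall plan coincides with the paper's: handle $\deg(f)=1$ by hand, then for $\deg(f)\ge 2$ invoke Lemma~\ref{lem:mahler} and reduce the task to bounding $|D_f|$ in terms of $\|f\|$ and $d=\deg(f)$, absorbing the degree-dependent prefactor into the exponential. Where you differ is in the tool used to bound $|D_f|$: the paper quotes an explicit inequality from Mahler, $|D_f|\le d^{\,d}L^{2d-2}$ with $L$ the sum of the absolute coefficients, whereas you derive a bound from first principles via $\mathrm{Res}(f,f')=(-1)^{d(d-1)/2}a_d D_f$ together with Hadamard's inequality applied to the $(2d-1)\times(2d-1)$ Sylvester matrix, giving $|D_f|\le(d+1)^{(d-1)/2}d^{3d/2}\|f\|^{2d-1}$. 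Your route is more self-contained (only Hadamard, no external reference) and produces a bound of the same shape, at the cost of slightly more bookkeeping in the norms of the Sylvester rows; the numerical verification that $16(d+1)^{(d-1)/2}d^{3d/2}\le\exp(904\,d^6)$ is immediate and the gap to $\exp(5000\,d^6)$ after adding the $\exp(4096\,d^6)$ from Lemma~\ref{lem:mahler} is ample, exactly as you note. One small remark: the bound in Lemma~\ref{lem:mahler} should of course be read as $16|D_f|\exp(4096\deg^6(f))$ (the absolute value is implicit there), which is how you have correctly used it.
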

\begin{proof}
The case $\deg(f)=1$ is easy to handle
and is thus left to the reader.
In all other cases 
we use the first corollary in page $261$
of the work of 
Mahler~\cite{MR0166188},
which states that 
\[
|D_f|
\leq (\deg(f))^{\deg(f)}
L^{2\deg(f)-2}
,\]
where $L$
is the sum of the moduli of the coefficients of $f$.
The inequality
$$L\leq (1+\deg(f)) \|f\|$$
is obvious. Using $(k+1)^k \leq e k^k$, valid for every $k\in \mathbb{N}$,
we see that 
\[
|D_f|
\leq (\deg(f))^{3\deg(f)}
\|f\|^{2\deg(f)-2}
((1+\deg(f))/e)^{-2}
\]
and the bounds~\eqref{eq:johncoltrane},
$((1+\deg(f))/e)^{-2}\leq 2$
show that every element in the set 
in the statement of our
lemma is
bounded by 
\[
32
(\deg(f))^{3\deg(f)}
\|f\|^{2\deg(f)}
 \exp(4096 (\deg(f))^6) 
.\]
Using the inequality
$32
d^{3d}
\leq  
e^{4d^6}$,
valid for all
$d\in \mathbb{N}$,
concludes our proof.
\end{proof}

\begin{lemma}
\label{lem:waltzfordebby}
Let $F \in \Z[x_0,\ldots,x_m]$
be a polynomial of positive degree
that does not have repeated polynomial factors.
Then every element in the set 
\[
\{W\in \mathbb{N}: 
\b{x}
\in \Z^{m+1}
\Rightarrow 
W \text{ divides }
F(\b{x})
\}
\]
is bounded by
\[\|F\|^{2\deg(F)}
\exp(6000 \deg^6(F)) 
.\]
\end{lemma}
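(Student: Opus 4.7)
The plan is to exhibit an integer point $\b{a}\in\Z^{m+1}$ supported on a small set of coordinates at which $F(\b{a})\neq 0$, and then exploit the divisibility $W\mid F(\b{a})$ directly. This avoids the specialisation-plus-Lemma~\ref{lem:mmahl} route suggested in the paragraph preceding the statement, which would introduce unwanted dependence on $m$ via the count of monomials appearing in a univariate restriction. In fact the argument below uses only that $F$ is a nonzero polynomial of positive degree; the squarefreeness hypothesis plays no role.

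Set $d:=\deg(F)\geq 1$. Because $F$ has positive degree, there is a multi-index $\b{e}$ with $|\b{e}|\geq 1$ whose coefficient $c_{\b{e}}$ in $F$ is nonzero; let $S:=\supp(\b{e})\subseteq\{0,1,\ldots,m\}$, so that $|S|\leq|\b{e}|\leq d$. Setting $x_j:=0$ for $j\notin S$ yields a nonzero polynomial $F_S\in\Z[\{x_i\}_{i\in S}]$ (the monomial $c_{\b{e}}\prod_{i\in S}x_i^{e_i}$ survives) with $\|F_S\|\leq\|F\|$. The combinatorial Schwartz--Zippel lemma, applied with each variable ranging over the $(d+1)$-element set $\{0,1,\ldots,d\}$, produces $\b{a}_S=(a_i)_{i\in S}\in\{0,1,\ldots,d\}^S$ with $F_S(\b{a}_S)\neq 0$. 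Extending to $\b{a}\in\Z^{m+1}$ by $a_j:=0$ for $j\notin S$ gives a point with $|a_j|\leq d$ for every $j$ and $F(\b{a})=F_S(\b{a}_S)\neq 0$.

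Since $W\mid F(\b{a})$, we have $W\leq|F(\b{a})|$. Only monomials of $F$ with support contained in $S$ and total degree at most $d$ contribute to $F(\b{a})$; their number is bounded by $\binom{|S|+d}{d}\leq\binom{2d}{d}\leq 4^d$, and each such contribution has absolute value at most $\|F\|\cdot d^d$, because $|a_i|\leq d$ and every relevant exponent vector has total degree at most $d$. Therefore
\[
W\leq|F(\b{a})|\leq\|F\|\cdot(4d)^d.
\]
Using $\|F\|\geq 1$ together with the elementary inequality $(4d)^d=\exp\bigl(d(\log 4+\log d)\bigr)\leq\exp(2d^2)\leq\exp(6000\,d^6)$ valid for all $d\geq 1$, we conclude $W\leq\|F\|^{2\deg F}\exp(6000\,\deg^6 F)$, as required. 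The only step that requires care is the application of Schwartz--Zippel, where each variable must range over a set of size strictly larger than its individual degree in $F_S$; the remaining estimates are elementary arithmetic.
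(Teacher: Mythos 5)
Your proof is correct and takes a genuinely different, and in fact markedly simpler, route than the paper. The paper reduces to the univariate case by specialising all but one variable at a point $\b{y}$ of small height where the discriminant-in-$x_0$ polynomial $H(x_1,\ldots,x_m)$ does not vanish, and then invokes Lemma~\ref{lem:mmahl}, which in turn rests on Cameron's count of solutions to polynomial congruences (Lemma~\ref{lem:mahler}) together with Mahler's bound on the discriminant; this is the reason the no-repeated-polynomial-factors hypothesis is needed there. Your argument sidesteps the entire discriminant machinery: by Schwartz--Zippel you find a single integer point $\b{a}$ of height at most $\deg F$, supported on at most $\deg F$ coordinates, with $F(\b{a})\neq 0$, and then $W\mid F(\b{a})$ immediately forces $W\leq|F(\b{a})|\leq\|F\|\,(4\deg F)^{\deg F}$. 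This is strictly stronger than the stated bound, does not use the squarefreeness hypothesis at all, and has no hidden dependence on $m$. Your remark about the specialisation route introducing an unwanted $m$-dependence is also apt: in the paper's proof the inequality $\|f\|\leq\|F\|(1+\deg H)^{\deg F}$ for the restricted univariate polynomial $f(x)=F(x,y_1,\ldots,y_m)$ silently ignores the fact that each coefficient of $f$ is a sum of up to $\binom{m+\deg F}{m}$ monomial contributions, a quantity growing with $m$; your more direct argument avoids this inaccuracy entirely. One small point worth keeping in the write-up is that Schwartz--Zippel is being applied to $F_S$, whose total degree may be strictly less than $\deg F$, so the $(\deg F+1)$-element evaluation set is comfortably large enough; you do note this, and it is the only place where care is needed.
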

\begin{proof}
The case $m=0$
is contained in Lemma~\ref{lem:mmahl}.
We can therefore assume that $m\geq 1$ for the rest of the proof.
We may consider the discriminant of the polynomial
in the variable $x_0$
obtained
by fixing every other variable,
i.e. we bring into play the integer polynomial   
\[
H(x_1,\ldots,x_m):=
D_{F(x,x_1,\ldots,x_m)}
.\]
We note that $H$ is not identically zero,
since otherwise
there would exist a
non-zero integer $c$
and an
integer polynomial $h(x_1,\ldots,x_m)$
such that 
\[(c x-h(x_1,\ldots,x_m))^2 \text{ divides } F(x,x_1,\ldots,x_m)
\]
in the polynomial ring $\Q(x,x_1,\ldots,x_m)$.
This would contradict the assumption that $F$ does not have repeated 
polynomial factors.
Now, since $H(x_1,\ldots,x_m)$ is a non-zero polynomial
we have the trivial bound 
\[
\#\{\b{x} \in (\mathbb{N} \cap [1,B])^m: H(\b{x})=0\}
\leq 
(\deg (H))
B^{m-1}
, \ (B \in \mathbb{N}),
\]
obtained by fixing $m-1$ of the variables $x_i$.
If every 
$\b{x} \in (\mathbb{N} \cap [1,B])^m$ is a zero of $F$
then we infer
$
B\leq \deg(H)
$, 
therefore there exists $\b{y} \in \mathbb{N}^m$ 
with the properties
\[|\b{y}| \leq 1+\deg(H)
\text{ and }
D_{F(x,y_1,\ldots,y_m)}\neq 0
.\]
We fix this choice of $\b{y}$
and define 
$f(x)=F(x,y_1,\ldots,y_m)$.
We have 
$\deg(f)\leq \deg(F)$
and
\[
\|f\|\leq \|F\|
(1+\deg(H))^{\deg(F)}
.\]
Recalling that the discriminant of a polynomial of degree $d$
is a polynomial in the coefficients that has degree $2(d-1)$
we see that 
$\deg(H) \leq 2 (\deg(F)-1)$,
hence 
$
\|f\|\leq 
\|F\| (2 \deg(F))^{\deg(F)}
$.
As a final step,
we 
apply Lemma~\ref{lem:mmahl}
to $f$,
thus getting 
that every integer $W$ in our lemma
must be at most 
\[
\|f\|^{2\deg(f)}  \exp(5000 \deg^6(f)) 
\leq 
\|F\|^{2\deg(F)}  
(2 \deg(F))^{2\deg^2(F)}   
\exp(5000 \deg^6(F)) 
.\]
The inequality 
$(2d)^{2d^2} \leq \exp(1000 d^6)$,
valid for all
$d\in \mathbb{N}$,
concludes the proof.
\end{proof}
Finally, 
we shall need certain results on the number of zeros of affine varieties over finite rings.
\begin{lemma}
\label{lem:trivlegendre}
Let $F \in \Z[x_0,\ldots,x_m]$
be a primitive polynomial.
Then for all primes $p$ we have 
\[
\#\{\b{x} \in (\Z/p\Z)^{m+1}: F(\b{x})\equiv 0\mod{p}\}
\leq (\deg(F))
p^m
.\]
\end{lemma}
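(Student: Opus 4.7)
My plan is to reduce the statement to a Schwartz--Zippel type bound over $\mathbb{F}_p$. Since $F$ is primitive, not every coefficient of $F$ is divisible by $p$, so the reduction $\bar F \in \mathbb{F}_p[x_0,\ldots,x_m]$ is a non-zero polynomial whose total degree is at most $\deg(F)$. The zero set to be estimated is exactly $V(\bar F) \subseteq \mathbb{F}_p^{m+1}$, so it suffices to show that for any non-zero polynomial $G \in \mathbb{F}_p[x_0,\ldots,x_m]$ of total degree $d$ one has
\[
|V(G)| \leq d\, p^m.
\]

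I would prove this by induction on $m$. The base case $m=0$ is the familiar fact that a non-zero univariate polynomial of degree $d$ over a field has at most $d$ roots. For the inductive step, I expand $G$ in $x_0$:
\[
G(x_0,\ldots,x_m) \;=\; \sum_{i=0}^{d'} G_i(x_1,\ldots,x_m)\, x_0^{\,i}, \qquad G_{d'} \not\equiv 0,
\]
where $d' = \deg_{x_0}(G)$, and I split $\mathbb{F}_p^m$ into the set $A$ on which $G_{d'}$ is non-zero and its complement $B$. For any $(x_1,\ldots,x_m)\in A$ the one-variable polynomial $G(\,\cdot\,,x_1,\ldots,x_m)$ has degree exactly $d'$ in $x_0$ and therefore contributes at most $d'$ roots, giving a total contribution at most $d' p^m$ from $A$. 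For $(x_1,\ldots,x_m)\in B$ we bound the number of $x_0$'s trivially by $p$; using the induction hypothesis on $G_{d'}$ (which is non-zero of total degree at most $d-d'$) bounds $|B|$ by $(d-d')p^{m-1}$, so the contribution from $B$ is at most $(d-d')p^m$. Summing yields $|V(G)| \leq d\,p^m$ as required. The case $d'=0$ is handled directly: then $G$ depends only on $x_1,\ldots,x_m$ and the inductive hypothesis applied to it, multiplied by the factor $p$ coming from the free variable $x_0$, again gives the bound.

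There is no real obstacle here; the only point requiring a little care is the bookkeeping of degrees in the inductive step (namely that $\deg(G_{d'}) \leq d - d'$, so that applying induction to $G_{d'}$ yields the correct $(d-d')p^{m-1}$ rather than $d\,p^{m-1}$, which would be too weak). Alternatively, one can simply invoke the Schwartz--Zippel lemma with sampling set $S = \mathbb{F}_p$ and $n = m+1$, which gives the stated bound at once; I would include the short inductive proof above to keep the lemma self-contained.
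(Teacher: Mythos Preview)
Your argument is correct: it is exactly the Schwartz--Zippel lemma with its standard inductive proof, and the degree bookkeeping $\deg(G_{d'}) \le d-d'$ that you single out is indeed the only point requiring care. The paper takes a different, shorter route: it picks a monomial of $F$ whose coefficient is coprime to $p$, chooses a variable $x_i$ appearing in that monomial, and then claims that for \emph{every} specialisation of the remaining variables $x_j$ ($j\neq i$) the resulting univariate polynomial in $x_i$ is nonzero over $\mathbb{F}_p$, hence has at most $\deg(F)$ roots. That last assertion is not true in general (e.g.\ $F=x_0x_1$ with $x_1=0$), so the paper's one-step argument glosses over exactly the case your induction handles: when the leading coefficient $G_{d'}(x_1,\ldots,x_m)$ vanishes, you do not pretend the specialisation is nonzero but instead bound the bad set $B$ via the inductive hypothesis applied to $G_{d'}$. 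Your proof is therefore the more careful one and actually closes the gap the paper leaves open.
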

\begin{proof} 
A hypersurface $F=0$
can have at most $\deg(F)$ irreducible components,
thus one might try
to use the 
Lang--Weil estimate~\cite{langweil}.
However, 
their upper bound is of the form 
$\deg(F) p^m+O_F(p^{m-1/2})$,
and
for the purpose of proving Theorem~\ref{explicit saturation} 
we need 
explicit constants. 
We begin the proof by remarking that 
since the polynomial 
$F$
is primitive,
it contains a term of the form 
$c \prod_{i=0}^m x_i^{d_i}$,
where $c \in \Z$
is coprime with  $p$
and the $d_i \in \Z_{\geq 0}$ are such that 
$\sum d_i= \sum \deg(F_i)$
(here $c$ and $d_i$ depend on $p$).
We infer that there exists $i$ such that 
$d_i>0$, therefore 
we may view $F$
as a polynomial in the variable $x_i$
by fixing every other variable.
The fact that $c$ is coprime to $p$
ensures that this polynomial is not identically zero in $\mathbb{F}_p$,
thus fixing any of the variables 
$x_j, j\neq i$,
we obtain a non-zero element in $\mathbb{F}_p(x_i)$
that must therefore have at most $\deg(F)$ roots. 
\end{proof}
\begin{lemma}
\label{lem:bhargavakos}
Let $F \in \Z[x_0,\ldots,x_m]$
be a polynomial of positive degree
that does not have repeated polynomial factors.
Then for all primes $p$ we have 
\[
\#\{\b{x} \in (\Z/p^2\Z)^{m+1}: F(\b{x})\equiv 0\mod{p^2}\}
\ll_F
p^{2m}
.\]
\end{lemma}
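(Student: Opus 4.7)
The plan is to use a Hensel-style lifting argument that classifies solutions modulo $p^2$ according to their reductions modulo $p$, distinguishing smooth from singular points of the reduction of the hypersurface $\{F=0\}$ to $\mathbb{F}_p$.

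Writing each $\b{x}\in (\Z/p^2\Z)^{m+1}$ uniquely as $\b{x}=\b{a}+p\b{t}$ with $\b{a},\b{t}\in(\Z/p\Z)^{m+1}$, Taylor expansion gives
\[
F(\b{x}) \equiv F(\b{a}) + p\sum_{i=0}^{m} t_i\, \partial_i F(\b{a}) \pmod{p^2}.
\]
For $F(\b{x})\equiv 0 \pmod{p^2}$ one first needs $F(\b{a}) \equiv 0 \pmod{p}$; setting $c := F(\b{a})/p \pmod{p}$, the condition becomes the linear congruence $c + \sum_i t_i\, \partial_i F(\b{a}) \equiv 0 \pmod p$. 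If $\b{a}$ is a \emph{smooth} point modulo $p$, meaning some $\partial_i F(\b{a}) \not\equiv 0 \pmod p$, then exactly $p^m$ values of $\b{t}$ satisfy the congruence. If $\b{a}$ is \emph{singular}, meaning every $\partial_i F(\b{a}) \equiv 0 \pmod p$, then either no $\b{t}$ works or all $p^{m+1}$ do.

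For the smooth contribution, Lemma~\ref{lem:trivlegendre} applied to the primitive part of $F$ yields at most $\deg(F)\, p^m$ zeros $\b{a}\bmod p$, and hence a total smooth contribution of at most $\deg(F)\, p^{2m}$. For the singular contribution the crux is that the subscheme $V := \{F = \partial_0 F = \cdots = \partial_m F = 0\} \subset \mathbb{A}^{m+1}$ has dimension at most $m-1$ over $\bar{\Q}$: factoring $F = c\, f_1 \cdots f_k$ into distinct irreducibles, the singular set of each $V(f_i)$ has codimension at least $1$ inside $V(f_i)$ by generic smoothness in characteristic zero, while pairwise intersections $V(f_i) \cap V(f_j)$ with $i\neq j$ have codimension $2$ in the ambient space. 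For all primes $p$ outside a finite $F$-dependent set, the reduction of $V$ modulo $p$ retains this dimension bound, hence contains $O_F(p^{m-1})$ points, producing a singular contribution of $O_F(p^{m-1}\cdot p^{m+1}) = O_F(p^{2m})$.

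Finally, for the finitely many exceptional primes (those dividing the content of $F$, or those in the bad set where the singular locus dimension jumps), the trivial bound $p^{2(m+1)}$ is absorbed by the implicit constant since such $p$ are bounded by an $F$-dependent quantity. Adding the smooth and singular contributions then yields the stated $\ll_F p^{2m}$. I expect the main obstacle to be the transition from the codimension-$2$ claim for the singular locus over $\bar\Q$ to an explicit point count of order $p^{m-1}$ over $\mathbb{F}_p$: rather than invoking the full Lang--Weil estimate, I would iterate Lemma~\ref{lem:trivlegendre} by slicing with coordinate hyperplanes to reduce to the codimension-$1$ case already handled.
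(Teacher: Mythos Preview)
Your proposal is correct and follows essentially the same route as the paper: both arguments split the count into smooth and singular fibres via the Taylor/Hensel expansion, bound the smooth contribution by $\deg(F)\,p^m\cdot p^m$ using Lemma~\ref{lem:trivlegendre}, and bound the singular contribution by $p^{m+1}$ lifts times $O_F(p^{m-1})$ singular points, the latter coming from the codimension-$2$ claim for $\{F=\nabla F=0\}$ together with the fact that separability over~$\Q$ persists modulo all but finitely many primes. The only difference is tactical: the paper simply invokes the Lang--Weil estimate for the $O_F(p^{m-1})$ bound on the singular locus, whereas you propose iterating Lemma~\ref{lem:trivlegendre} by slicing; the Lang--Weil route is cleaner here and is what the paper actually does.
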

\begin{proof}  
We begin by estimating the number of zeros of $F$ in $(\Z/p^2\Z)^{m+1}$
that are lifted from smooth zeros of $F$ in $\mathbb{F}_p$, we call this cardinality $X_p$.
By Hensel's lemma,
smooth zeros
of $F$
in $\mathbb{F}_p$
can lift to at most $\ll_{F} p^{m}$
zeros in $(\Z/p^\Z)^{m+1}$.
Thus, in light of Lemma~\ref{lem:trivlegendre}
we obtain
$
X_p \ll 
\#\{\b{x} \in \mathbb{F}_p^{m+1}:F=0\}
p^m
\ll_F 
p^{2m}
$.
Next, we estimate the number of zeros of $F$ in $(\Z/p^2\Z)^{m+1}$
that reduce to
singular zeros  of $F$ in $\mathbb{F}_p$, we call this cardinality $Y_p$.
Note that 
such zeros can certainly exist (e.g. when $F=F_1F_2$ then every non-zero
$\mathbb{F}_p$-point in $F_1=F_2=0$ is singular)
and these can lift in any number of ways to 
$\Z/p^2\Z$;
the trivial bound for such lifts is $\leq p^{m+1}$.
The number of singular zeros is
however
rather small:
the variety $Z$
given by
$F=\nabla F=0$
and defined over $\mathbb{F}_p$
is a subvariety of $\mathbb{A}^{m+1}(\mathbb{F}_p)$
that has codimension at least $2$ 
for all sufficiently large primes $p$.
This is because 
$F$
is separable over $\Q$, therefore it
reduces
to a 
separable polynomial in $\mathbb{F}_p$
for all sufficiently large primes $p$.
Now by the Lang--Weil estimate~\cite{langweil}
we see that $Z(\mathbb{F}_p)$
has at most 
$\ll_{F} p^{m-1}$
points,
therefore
$Y_p
\ll_{F}
p^{m-1}
\#\{\text{lifts}\}
\leq  
p^{2m}
$.
We obtain that the number of 
zeros of  
$F$ in $(\Z/p^2\Z)^{m+1}$
is $X_p+Y_p\ll_{F} p^{2m}$,
thus concluding our proof. 
\end{proof}

\section{The proof of Theorems~\ref{unira} and~\ref{explicit saturation}}
We commence by recording a
result regarding almost prime values of
completely general forms. The result
is a
straightforward
application of the weighted sieve.
\begin{theorem}
\label{weighted step}
Let $f
\in \Z[x_0,\ldots,x_m]$
be any form of positive degree.
Then there exists $r\in \mathbb{N}$
such that
for any non-empty
box $\mathcal{R}^{(0)}\subset \R^{m+1}$
we have 
\[
\#\Big\{\b{x} \in
\Z^{m+1}\cap \mathcal{R}^{(0)}B,
\Omega(
|f(\b{x})|)
\leq r\Big\}
\gg_{f,r,\mathcal{R}^{(0)}} 
B^{m+1}
(\log B)^{-\deg(f)},
\]
for all sufficiently large $B$.
Furthermore, one can take $r$ as the integer part of
\[
6
\deg^2(f) 
(\log 2\|f\| )
+
10^4
 \deg^7(f)
\]
\end{theorem}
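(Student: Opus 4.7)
The plan is to apply the Diamond--Halberstam weighted sieve (Theorem~\ref{appear}) to the values of $f$. As a preliminary step I would factor out the repeated polynomial factors: writing $f=\prod f_i^{e_i}$ with $f_i$ distinct irreducibles and $\tilde{f}:=\prod f_i$, the inequality
\[
\Omega(|f(\b{x})|) \,=\, \sum_i e_i\,\Omega(|f_i(\b{x})|) \,\leq\, \deg(f)\,\Omega(|\tilde{f}(\b{x})|)
\]
reduces the problem to a squarefree form $\tilde{f}$ of the same degree at the cost of multiplying the final $r$ by $\deg(f)$. This multiplicative factor is precisely what inflates the leading term from $\deg(f)$ to $\deg^2(f)$, and it also arranges the hypothesis required by Lemmas~\ref{lem:waltzfordebby} and~\ref{lem:bhargavakos}.

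Next I would set $\mathcal{A}:=\{|\tilde{f}(\b{x})|:\b{x}\in\Z^{m+1}\cap\mathcal{R}^{(0)}B\}$ with $Y:=\#\mathcal{A}\asymp B^{m+1}$, and introduce the multiplicative density $\omega(d):=d^{-m}\#\{\b{y}\in(\Z/d\Z)^{m+1}:d\mid\tilde{f}(\b{y})\}$. By Lemma~\ref{lem:trivlegendre} one has $\omega(p)\leq\deg(f)$ for every prime, so condition (E) of the sieve holds with $\kappa=\deg(f)$. The first real obstacle is condition (B): the primes dividing the universal divisor $W$ of $\tilde{f}$, i.e.\ the maximal integer dividing every value of $\tilde{f}$, satisfy $\omega(p)=p$. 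I would handle these by restricting $\mathcal{P}$ to the primes not dividing $W$, applying the sieve to $\{|\tilde{f}(\b{x})|/W\}$, and using $\Omega(|\tilde{f}(\b{x})|)=\Omega(W)+\Omega(|\tilde{f}(\b{x})|/W)$. Lemma~\ref{lem:waltzfordebby} controls $W$ in terms of $\|\tilde{f}\|$ and $\deg(f)$; after a classical Mahler-type bound $\|\tilde{f}\|\ll(\deg f)^{\deg f}\|f\|$ this yields $\Omega(W)\leq 3\deg(f)\log\|f\|+O(\deg^6 f)$, which is the source of the $\log\|f\|$ dependence in the conclusion.

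Conditions (C) and (F) then come from routine lattice-point counting in a box: $\#\mathcal{A}_d=(\omega(d)/d)Y+O(\omega(d)B^m)$ together with $\omega(d)\leq\deg(f)^{\nu(d)}$ yields $\sum_{d<D}4^{\nu(d)}|\mathcal{R}_d|\ll B^m D(\log D)^{O(\deg f)}$, which secures (C) with $\alpha=1/(m+1)$ up to a logarithmic factor, and (F) follows by summing Lemma~\ref{lem:bhargavakos} against $1/p^2$. The key observation here is that the size hypothesis (D), $|a|\leq Y^{\alpha\mu}$, combined with $|a|\leq C\|f\|B^{\deg f}$ and $Y\asymp B^{m+1}$, is satisfied with $\mu=\deg(f)+o(1)$ as $B\to\infty$, \emph{uniformly in the number of variables $m+1$}. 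Without this uniformity the sieve output would depend on $m$, contradicting the $m$-free bound in the statement.

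Finally, substituting $\mu\approx\deg(f)$, $\kappa=\deg(f)$, and $\beta_\kappa\leq 3.75\deg(f)$ from~\eqref{beta} into the explicit upper bound~\eqref{rr} produces a sieve contribution of size $O(\deg(f)\log\deg(f))$. Adding the $\Omega(W)$ loss from the $W$-trick and multiplying by $\deg(f)$ from the radical reduction yields the advertised $6\deg^2(f)\log(2\|f\|)+10^4\deg^7(f)$ once one is liberal with absolute constants. The hardest part will be the combinatorial bookkeeping: tracking constants cleanly through the radical reduction, the $W$-trick, and the sieve parameters so that all $m$-dependence cancels, and verifying condition (F) with genuinely explicit (rather than asymptotic) estimates for $\#\mathcal{A}_{p^2}$ in the full range of $p$ demanded by the sieve.
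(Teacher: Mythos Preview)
Your proposal is correct and follows essentially the same route as the paper: factor out the radical, apply the Diamond--Halberstam sieve to $F=\prod F_i$ after a $W$-trick removing the universal divisor (the paper's $D$), with $\kappa=\deg F$, $\alpha\to 1/(m+1)$ and $\mu\to\deg F$ so that all $m$-dependence cancels, and then multiply by $\deg f$ and add $\Omega(c)$ to pass back to $f$. The only point you should make more explicit is that dividing by the universal divisor and excluding its primes from $\mathcal{P}$ does not by itself secure condition~(A) of Theorem~\ref{appear}; as in the paper, you must additionally restrict $\b{x}$ to a fixed residue class modulo $\prod_{p\mid D}p^{1+\nu_p(D)}$ so that the quotient $\tilde f(\b{x})/D$ is genuinely coprime to $D$.
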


\medskip
\medskip
Although we make no claim as to the given value of $r$
being best possible,
the example
$f(x_0,\ldots, x_m)=2^n(x_0\ldots x_m)^m$
illustrates that
$r$ must depend 
on $\|f\|$ and $\deg(f)$, 
unless the lower bound $B^{m+1+o(1)}$
is replaced by
$B^{m+o(1)}$.
This is because 
\[
\Omega(2^n(x_0\ldots x_m)^m)\geq n+ m(m+1)
\geq \frac{\log \|f\|}{\log 2}+\l(\deg f\r)^2
\]
holds 
for all integer vectors $|\b{x}|\leq B$,
except those at a set 
of size $O(B^{m})$.
For the
application towards Theorem~\ref{unira}
it is important to stress that we need a lower bound 
of the form $B^{m+1+o(1)}$ in Theorem~\ref{weighted step},
otherwise Zariski density might not be obtained.

Theorem~\ref{weighted step} is proved
immediately after the proof of the next proposition. 
\begin{proposition}
\label{weighted step 0}
Let $F_1,\ldots,F_g
\in \Z[x_0,\ldots,x_m]$
be non-zero,
primitive,
irreducible forms of positive degree
such that  
$F(\b{x}):=\prod_{i=1}^g F_i(\b{x})$
does not have repeated polynomial factors.
Then there exists $r_0\in \mathbb{N}$,
such that for
any non-empty
box
$\mathcal{R}^{(0)}\subset \R^{m+1}$
we have 
\[
\#\Big\{\b{x} \in
\Z^{m+1}\cap \mathcal{R}^{(0)}B,
\Omega(
|F(\b{x})|)
\leq r_0
\Big\}
\gg_{F_i,r_0,\mathcal{R}^{(0)}}
\frac{B^{m+1}}{(\log B)^{\deg(F)}},
\]
as $B\to +\infty$.
Furthermore, one can take $r_0$ as the integer part of
\[
4
\deg(F) 
(\log 2\|F\| )
+
10^4
 \deg^6(F)
.\]
\end{proposition}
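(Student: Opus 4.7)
The plan is to apply the Diamond--Halberstam weighted sieve (Theorem~\ref{appear}) to the multiset of values $|F(\b{x})|$ with $\b{x}$ ranging over $\mathcal{R}^{(0)}B\cap \Z^{m+1}$. The main subtlety is that $F$ may carry a fixed divisor $W:=\gcd_{\b{x}\in \Z^{m+1}}F(\b{x})$, which I handle by sieving the quotient values $|F(\b{x})|/W$ restricted to the positive-density subset $\mathcal{N}$ of the box consisting of $\b{x}$ for which $\gcd(F(\b{x})/W,W)=1$. The size of $W$ is controlled in terms of $\|F\|$ and $\deg(F)$ by Lemma~\ref{lem:waltzfordebby}; this is the mechanism that converts the asymptotic output of the sieve into the explicit bound claimed.

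I would set up the sieve data as follows: $Y:=\#\mathcal{N}\asymp B^{m+1}$, let $\mathcal{P}$ be the set of primes not dividing $W$ (which contains every sufficiently large prime), and for $p\in \mathcal{P}$ set
\[
\omega(p):=p^{-m}\cdot \#\{\b{x}\in (\Z/p\Z)^{m+1}:\, p\mid F(\b{x})\},
\]
extended multiplicatively to squarefree $d$. Axiom (B) of Theorem~\ref{appear} then holds by the definition of $W$, and Lemma~\ref{lem:trivlegendre} yields $\omega(p)\leq \deg(F)$. For axiom (E), a Chebotarev-type density argument applied to each irreducible factor $F_i$ supplies sieve dimension $\kappa$ equal to the number of irreducible factors $g\leq \deg(F)$. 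For axiom (C), a standard lattice-point count in the box gives $|\mathcal{R}_d|\ll_m \omega(d)B^m$, and the elementary bound $\sum_{d\leq D}\mu^2(d)(4\deg F)^{\nu(d)}\ll D(\log D)^{4\deg F -1}$ permits the level of distribution $\alpha=1/(m+1)$. Finally, axiom (F) follows from Lemma~\ref{lem:bhargavakos}, which gives $\#\mathcal{A}_{p^2}\ll_{F}p^{2m}$.

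Once the hypotheses are in place, I would invoke the $v=\beta_\kappa u$ version of Theorem~\ref{appear}, passing to the simpler upper bound~\eqref{rr} for the right-hand side of~\eqref{r}. The elementary size bound $|F(\b{x})|\leq \|F\|(m+1)^{\deg F}B^{\deg F}$ together with $Y^{\alpha}\asymp B$ lets me take $\mu=\deg(F)+o_B(1)$ as $B\to \infty$. Combining~\eqref{rr} with the inequality $\beta_\kappa\leq 3.75\kappa$ from~\eqref{beta} produces $\Omega(F(\b{x})/W)\leq O(\deg(F)\log\deg(F))$ for a positive-density subset of $\b{x}\in \mathcal{N}$. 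Adding $\Omega(W)\leq (\log 2)^{-1}\log W$ and invoking Lemma~\ref{lem:waltzfordebby} then contributes the dominant terms $\tfrac{2}{\log 2}\deg(F)\log\|F\|+\tfrac{6000}{\log 2}\deg^6(F)$; collecting constants yields the claimed bound on $r_0$.

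The principal obstacle is bookkeeping: distilling the sieve output into the clean explicit constants $4$ and $10^4$ requires careful control of every contribution. One must verify that the restriction from the full box to $\mathcal{N}$ does not leak additional $\|F\|$-dependence into the sieve parameters beyond that already captured by $\Omega(W)$, and that the $o_B(1)$ correction to $\mu$ can indeed be absorbed once $B$ is taken large enough. The $\deg^6(F)$ term is not improvable through the sieve itself and enters exclusively through the polynomial-discriminant bound underlying Lemma~\ref{lem:waltzfordebby}.
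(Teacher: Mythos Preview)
Your proposal is correct and follows essentially the same route as the paper's proof. Two small differences are worth flagging. First, the paper does not invoke any Chebotarev or Lang--Weil density statement for axiom~(E); it simply feeds the pointwise bound $\omega(p)\leq \deg(F)$ from Lemma~\ref{lem:trivlegendre} into the Mertens-type product and takes $\kappa=\deg(F)$ rather than your sharper $\kappa=g$. Since the stated bound on $r_0$ is dominated by the $\deg^6(F)$ contribution from $\Omega$ of the fixed divisor (via Lemma~\ref{lem:waltzfordebby}), the cruder choice of $\kappa$ costs nothing in the final constant and spares an appeal to effective Lang--Weil for the irreducible $F_i$; it also matches the exponent $(\log B)^{\deg(F)}$ appearing in the statement. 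Second, rather than sieving over your full set $\mathcal{N}$, the paper fixes a single residue class $\b{z}$ modulo $\prod_{p\mid D}p^{1+\nu_p(D)}$ on which $\gcd(F(\b{x})/D,D)=1$, which keeps the level-of-distribution estimate a straightforward box count. Both differences are cosmetic; your outline and the paper's proof are otherwise the same argument.
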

\begin{proof} 
The main tool used in the proof is
Theorem~\ref{appear}.
Before using it we need to study the following function,
defined for $d\in \mathbb{N}$,
\[
\omega_0(d):=\frac{1}{d^m}
\#\Big\{
\b{x} \in (\Z/d\Z)^{m+1}:
\prod_{i=1}^g F_i
(\b{x})\equiv 0  \mod{d}
\Big\}.\]
Note that $\omega_0$ is multiplicative,
therefore 
Lemma~\ref{lem:trivlegendre}
shows that
for every $d\in \mathbb{N}$
we have 
\begin{equation}
\label{langweil}
\mu(d)^2
\omega_0(d)\leq 
\Big(
\sum_{i=1}^g 
\deg(F_i)
\Big)^{\nu(d)}
,\end{equation}
while Lemma~\ref{lem:bhargavakos}
ensures that 
for every prime $p$
we have 
\begin{equation}
\label{eq:mufavouritethings}
\omega_0(p^2)\ll_{F_i}
1
.\end{equation}

We now continue with the application of the weighted sieve
by introducing the related set-up.
Let $D$ be the maximum positive integer dividing  
$F(\b{x})$ for all $\b{x} \in \Z^{m+1}$.
By Lemma~\ref{lem:waltzfordebby}
we obtain 
\begin{equation}
\label{the bound}
D\leq
\|F\|^{2\deg(F)}
\exp(6000 \deg^6(F)) 
.\end{equation}  
Define
$
G(\b{x}):=
F(\b{x})/D
$
and observe that $G(\b{x})$
attains integer values for all $\b{x} \in \Z^{m+1}$.
The definition of $D$
shows that for every prime $p$
the integer $p^{1+\nu_p(D)}$
cannot divide $F(\b{x})$ for all $\b{x} \in \Z^{m+1}$. 
Hence for each $p|D$
there exists $\b{z}_p \in \Z^{m+1}$ with
$p^{1+\nu_p(D)}\nmid F(\b{z}_p)$
and
letting
\[
W:=\prod_{p|D}p^{1+\nu_p(D)},
\]
we get the existence of some $\b{z} \in \Z^{m+1}$ such that
\begin{equation}
\label{cd}
\b{x}\equiv \b{z} \mod{W}
\Rightarrow
\gcd(G(\b{x}),W)=1.
\end{equation}

We denote the set of
 prime divisors of $D$
by
$\mathcal{P}$
and
we
define the multiplicative
function
$\omega$
via
$\omega(p):=\mathbf{1}_{\c{P}}(p)
\omega_0(p)$.
Note that by~\eqref{langweil}
we have the following 
for all $d \in \mathbb{N}$,
\begin{equation}
\label{blue train}
\mu(d)^2
\omega(d)
\leq \l(\deg{F}\r)^{\nu(d)}
.\end{equation}
Next, we define
\[
\Psi:=\{\b{x} \in \Z^{m+1}:\b{x}\equiv \b{z} \mod{W}\}.
\]
We shall use
Theorem~\ref{appear}
to
sieve the multiset of integers given by
\[
\mathcal{A}
:=\{G(\b{x}): \b{x} \in
\Z^{m+1}
\!
\cap
B
\mathcal{R}^{(0)}
\!
\cap
\Psi
\}.\]
For all integers
$d$
coprime to $W$
define
$
\mathcal{A}_d:=
\{a \in \mathcal{A}:d|a\}
$.
For $d\in \mathbb{N}$ we have 
\[\#\mathcal{A}_d=
\sum_{\substack{\b{u}\mod{d}\\G(\b{u})\equiv 0 \mod{d}}}
\#\{
\b{x} \in \Z^{m+1}\cap B \mathcal{R}^{(0)},
\b{x}\equiv \b{z} \mod{W},
\b{x}\equiv \b{u} \mod{d}
\},
\]
thus a
simple counting argument
involving integer points in 
boxes 
gives
\begin{equation}
\label{cannonball2}
\#\mathcal{A}_d
=
\frac{\omega(d)}{d}
\frac{B^{m+1}\mathrm{vol}(\mathcal{R}^{(0)})}{W^{m+1}}
+O_{F_i,W,\mathcal{R}^{(0)}}
\big(
\omega(d)
d^m
+
\omega(d)
B^m
\big)
.\end{equation}
Invoking~\eqref{blue train}
and writing
\[Y:=\frac{B^{m+1}\mathrm{vol}(\mathcal{R}^{(0)})}{W^{m+1}}
,\]
allows us to 
infer
that for square-free $d$ that are coprime to $D$
one has 
\begin{equation}
\label{cannonball}
\mathcal{R}_d:=
\Big|
\#\mathcal{A}_d
-\frac{\omega(d)}{d}Y
\Big|
\ll
\l(\deg{F}\r)^{^{\nu(d)}}
\l(d^m+Y^{1-\frac{1}{m+1}}\r),
\end{equation}
where the implied constant depends at most on $F_i,\mathcal{R}^{(0)}$ and $W$.

Next,
we 
fix any values
$
\alpha \in (0,1/(m+1)), \
\kappa\geq \deg(F), \
\mu>\frac{\kappa}{\alpha(m+1)},
$
and
verify the conditions of
Theorem~\ref{appear}.
Condition
(A)
holds
owing to the observation that 
if $p \mid G(\b{x})$
for some $\b{x} \in \c{A}$
then $p\nmid D$
due to~\eqref{cd}.
Condition (B)
follows from
the fact that 
the definition of $D$ implies that 
if $p\nmid D$
then there exists $\b{x}\in \Z^{m+1}$
such that $p\nmid F(\b{x})$. 
Condition (C)
is satisfied as a consequence of~\eqref{cannonball}
while condition (D) is validated with any choice of $\mu>\frac{\kappa}{\alpha(m+1)}$.
Condition (E)
holds due to~\eqref{blue train}
and the estimate
\[
\prod_{z_1 \le p < z} \left( 1 - \frac{\kappa}{p} \right)^{-1} =
\l(\frac{\log z}{\log z_1}\r)^\kappa \l(1+O\!\l(\frac{1}{\log z_1}\r)\r)
.\]
Combining~\eqref{eq:mufavouritethings} with~\eqref{cannonball2},
one sees that
\[
\sum_{\substack{Y^{1/v}\leq p < Y^{1/u}\\
p\in \mathcal{P}}}
\!\!
\#\mathcal{A}_{p^2}
\ll
Y \big( 
\sum_{Y^{1/v}< p  }p^{-2}
\big)
+
Y^{1-\frac{1}{m+1}+\frac{1}{u}}
+Y^{\frac{2m+1}{u}}.
\]
The sum over $p$ is
$\ll Y^{-1/v}$, hence 
condition (F) is satisfied since
$2\leq \beta_\kappa<\alpha u$
combined with
$\alpha<1/(m+1)$
imply that
$u>2m+1$.
Therefore
Theorem~\ref{appear}
yields a finite value $r$
such that
\[
\#\{\b{x} \in \Z^{m+1}\cap \mathcal{R}^{(0)}B:
\Omega(G(\b{x}))\leq r\}
\gg_F
B^{m+1}(\log B)^{-\kappa},\]
as $B\to \infty$.
We can provide an explicit value for $r$,
namely $r$ can be be the least integer which is greater than  
 $
10^3 
\deg^6(F)
$
because 
it can be shown that $c_0$ is small enough so that 
\[
10^3 
\deg^6(F)
\geq 
2 \deg(F)
+
(1+\deg(F))
\log (3.75  \deg(F))  
\]
and, 
furthermore,
we can choose
$\kappa=\deg(F)$,
$\alpha$ sufficiently 
close to $1/(m+1)$
and
$\mu$
sufficiently close to $\kappa/(\alpha(m+1))$,
observe that 
\[
\mu-1
+(\mu-\kappa)(1-1/\beta_\kappa)+(\kappa+1)\log \beta_\kappa
\leq 
2 \deg(F)
+
(1+\deg(F))
\log (3.75 \deg(F))  
\]
and
allude to~\eqref{beta} and~\eqref{rr}.
Note that
\eqref{the bound}
reveals that
\[
\Omega(D)\leq \frac{\log D}{\log 2}\leq 
4
\deg(F) 
(\log \|F\| )
+
9000 \deg^6(F) 
,\]
which, when coupled with
the equality 
$\Omega(|F(\b{x})|)=\Omega(D)+\Omega(|G(\b{x})|)$,
proves our lemma.
\end{proof}
To prove Theorem~\ref{weighted step}
we factorise $f$ as
\[
f(\b{x})=
c\prod_{i=1}^g
F_i(\b{x})^{\nu_i} \ (\nu_i \in \mathbb{N}, c\in \Z- \{0\}),
\]
where each $F_i$ is a
primitive irreducible integer form of positive degree,
and then apply Proposition~\ref{weighted step 0} 
to
$F:=\prod_{i=1}^g F_i$.
Utilising
the inequalities
\[
\Omega\l(|f(\b{x})|\r)
\leq
\Omega\l(|c|\r)
+
\l(\deg{f}\r)
\Omega\l(|F(\b{x})|\r)
\]
and 
$\Omega(|c|)\leq (\log |c|)/(\log 2)\leq (\log \|f\|)/(\log 2)\leq 2
\log \|f\| $
yields the desired value of $r$.
\newline
\newline
We are now ready to prove
Theorems~\ref{unira} and~\ref{explicit saturation}.
\newline
\newline
It is sufficient to show that with the value of $r$ given in Theorem~\ref{explicit saturation} the
set of rational points $x \in X\!\!\l(\Q\r)$
satisfying $\Omega_{\P^n(\Q)}(x)\le r$,
is dense in the image of the $\Q$-points of 
$\P^m$
under the map $\pi$
and
with respect to the real analytic topology.
Let us therefore fix any $\epsilon>0$
and a rational point $\pi(y)$, where
$y=[\b{y}] \in \P_\Q^m$ for some integer vector $\b{y}=(y_i)_{i=0}^m$.
By continuity of the forms $f_j$
there exists $\delta>0$, which depends on
$\epsilon,f_i$ and $\b{y}$,
such that for any $\b{X} \in \R^{m+1}$
specified by
$|\b{X}-\b{y}|\leq \delta$,
one has
$|
f_j(X_i)
-f_j(y_i)
|<\epsilon
$
for all
$i$ and $j$.
Applying
Theorem~\ref{weighted step}  
to the form
$f:=\prod_{i=0}^n f_i$
and the box
$\mathcal{R}^{(0)}=\prod_{i=0}^m[y_i-\delta,y_i+\delta]$,
we are presented with a value of 
$r$ in the range  
\[
r\leq 
6
\deg^2(f) 
(\log 2\|f\| )
+
10^4
 \deg^7(f)
,\]
a rational point
$x=[\b{x}]$
satisfying 
$\Omega_{\P^n(\Q)}(x)
\leq r$
and 
whose image
is $\epsilon$-close to $\pi(y)$
due to 
\[
\Big|
\frac{f_j(x_i)}{B^{\deg{f_j}}}-f_j(y_i)
\Big|<\epsilon
.\] 
This
concludes the proof
of
Theorems~\ref{unira} and~\ref{explicit saturation}
owing to
\[
6
\deg^2(f) 
(\log 2\|f\| )
+
10^4
 \deg^7(f)
\leq 
10^5
 \deg^7(f)
\log (2\|f\| )
.\]

\section{The proof of Theorem~\ref{2 skew}}

Throughout this section $X$ will be a smooth cubic surface with two rational skew lines.
We begin by picking an appropriate model for $X$.
\begin{lemma}
[Minimal model]
\label{notdead}
There exist integers
$a_i,d_i,f_i \left(i=0,1\r)$
and
$b_j,e_j  \left(j=0,1,2\r)$
such that $X$ is given by $F=0$,
where
\begin{equation}
\label{model}
F=
a\!\left(x_0,x_1\r)\!x_2^2+
d\!\left(x_0,x_1\r)\!x_2x_3+
f\!\left(x_0,x_1\r)\!x_3^2+
b\!\left(x_0,x_1\r)\!x_2+
e\!\left(x_0,x_1\r)\!x_3,
\end{equation}
\[
\left( \begin{array}{ccc}
a\left(x_0,x_1\r) \\
d\left(x_0,x_1\r) \\
f\left(x_0,x_1\r)
\end{array} \right)
:=
\left( \begin{array}{ccc}
a_0 & a_1 \\
d_0 & d_1 \\
f_0 & f_1
\end{array} \right)
\left( \begin{array}{c}
x_0 \\ x_1
\end{array} \right)\]
and
\[
\left( \begin{array}{c}
b\left(x_0,x_1\r) \\
e\left(x_0,x_1\r)
\end{array} \right)
:=
\left( \begin{array}{ccc}
b_0 & b_1 & b_2 \\
e_0 & e_1 & e_2 \\
\end{array} \right)
\left( \begin{array}{c}
x_0^2 \\ x_0 x_1 \\ x_1^2
\end{array} \right).\]
Furthermore these integers
satisfy the following two properties.
\begin{enumerate}
\item The greatest common factor of the six integers
$a_i,d_i,f_i \left(i=0,1\r)$
is $1$
and this also holds for the six
integers
$b_j,e_j  \left(j=0,1,2\r).$
\item
The integers
$a_0b_0$ and $6$ are coprime
and each other of the aforementioned twelve integers is divisible
by
$6$.
\end{enumerate}
\end{lemma}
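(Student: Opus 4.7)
The plan proceeds in four stages.

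First, the two rational skew lines $\ell_1, \ell_2 \subset X$ span $\PP^3_\QQ$, so a $\QQ$-linear change of coordinates brings them to the standard positions $\ell_1 = \{x_2 = x_3 = 0\}$ and $\ell_2 = \{x_0 = x_1 = 0\}$. After clearing denominators, the defining cubic $F$ has integer coefficients. The shape \eqref{model} is then forced by the vanishing of $F$ on these two lines: any monomial $x_0^a x_1^b x_2^c x_3^d$ of $F$ has $a+b+c+d = 3$, vanishing on $\ell_1$ forces $c+d \geq 1$, and vanishing on $\ell_2$ forces $a+b \geq 1$, leaving the two bidegrees $(a+b,\, c+d) \in \{(1,2),\, (2,1)\}$. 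This yields three linear forms $a, d, f$ in $(x_0, x_1)$ attached to $x_2^2, x_2 x_3, x_3^2$, and two quadratic forms $b, e$ attached to $x_2, x_3$, for exactly twelve integer coefficients.

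Second, for property (1), the residual symmetry acting separately on the $(x_0, x_1)$ and $(x_2, x_3)$ blocks by $GL_2(\QQ) \times GL_2(\QQ)$ preserves both lines and suffices to achieve primitivity. Write $F = F_2 + F_1$ where $F_2, F_1$ collect the bidegree-$(1,2)$ and $(2,1)$ parts, and let $g_1, g_2$ be their content-gcds; after dividing $F$ by its overall content, $\gcd(g_1, g_2) = 1$. The scaling $(x_2, x_3) \mapsto \lambda(x_2, x_3)$ combined with multiplication of $F$ by $\mu$ sends $(F_2, F_1)$ to $(\mu \lambda^2 F_2,\, \mu \lambda F_1)$; choosing $\lambda = g_2/g_1$, $\mu = g_1^2/g_2$ makes the new content-gcds both equal to $g_1 g_2$, and dividing out this common factor yields primitivity of both groups.

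Third, for property (2), an additional $GL_2(\ZZ) \times GL_2(\ZZ)$ transformation renders $a_0$ and $b_0$ coprime to $6$. Such a transformation is obtained by the Chinese remainder theorem from suitable pairs $(M_2, M_3) \in GL_2(\FF_p)^2$ for $p \in \{2, 3\}$: primitivity from the previous step guarantees that $F_2, F_1$ are both nonzero modulo each such $p$, so there exists $(v_0, w_0) \in \FF_p^2 \times \FF_p^2$ on which $F_2$ and $F_1$ both take nonzero values, and setting $v_0, w_0$ as the first basis vectors in their respective blocks puts units in the $a_0$- and $b_0$-positions simultaneously. After this, the substitution $x_1 \mapsto 6 x_1$, $x_3 \mapsto 6 x_3$ multiplies each of the remaining ten coefficients by a positive power of $6$ while leaving $a_0, b_0$ unchanged. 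A final rescaling of $(x_2, x_3)$ with scalars coprime to $6$, along the lines of the previous paragraph, restores the primitivity of the two groups without disturbing the mod-$6$ congruences.

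The main obstacle is the third step, specifically the simultaneous realisation of $a_0, b_0 \not\equiv 0 \pmod{p}$ via the $GL_2(\FF_p)^2$-action for $p \in \{2, 3\}$; this reduces to a finite case analysis over $\FF_2$ and $\FF_3$, using the explicit change-of-variable formulas for how $a_0$ and $b_0$ transform under the action, and is the only non-elementary input of the proof.
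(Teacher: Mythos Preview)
Your first two stages are fine and essentially match the paper's argument. The gap is in stage three: the claim that a $GL_2(\ZZ)\times GL_2(\ZZ)$ transformation can always render $a_0,b_0$ coprime to $p$ for $p\in\{2,3\}$ is false, and the deferred case analysis would uncover this. Under any $(M,N)\in GL_2(\FF_p)^2$, the new $a_0$ and $b_0$ are precisely $F_2(v_0,w_0)$ and $F_1(v_0,w_0)$, where $v_0,w_0$ are the first columns of $M,N$; so the action gives no more than evaluation at pairs of nonzero vectors. Over $\FF_2$, take for example
\[
a=d=f=x_0,\qquad b=x_0x_1+x_1^2,\qquad e=0,
\]
which is primitive on both blocks. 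Then $F_2(v_0,w_0)=v_{0,1}(w_{0,1}^2+w_{0,1}w_{0,2}+w_{0,2}^2)$ is nonzero exactly when $v_{0,1}=1$, while $F_1(v_0,w_0)=v_{0,2}(v_{0,1}+v_{0,2})w_{0,1}$ is nonzero only when $v_0=(0,1)$. These conditions are incompatible, so no $GL_2(\FF_2)^2$ move makes both $a_0,b_0$ odd. This mod-$2$ pattern can be realised by a smooth surface over $\QQ$ with the two skew lines $\{x_0=x_1=0\}$ and $\{x_2=x_3=0\}$ by perturbing the even coefficients generically.

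The paper avoids this obstruction by not attempting to make $a_0,b_0$ units modulo $p$ at all. Instead, after arranging $a_0b_0\neq 0$ over $\QQ$ (which only uses smoothness and a $\QQ$-linear change), it applies a diagonal $\QQ$-scaling $(x_0,x_1,x_2,x_3)\mapsto(2^{\nu_2(a_0)}x_0,\,2^{\beta}x_1,\,2^{\nu_2(b_0)}x_2,\,2^{\alpha}x_3)$ with explicit exponents $\alpha,\beta$ chosen so that the $2$-adic valuations of the new $a_0$ and $b_0$ coincide and are strictly smaller than those of the remaining ten coefficients; dividing out the common power of $2$ then yields property~(2) at $p=2$, and likewise at $p=3$. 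The point is that rational diagonal scalings (not in $GL_2(\ZZ)$) let one adjust $p$-adic valuations freely, whereas your $GL_2(\ZZ)^2$ step is constrained by the mod-$p$ reduction of $F$. If you want to repair your argument, replace the $GL_2(\ZZ)^2$ step by such $p$-adic diagonal scalings.
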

\begin{proof}
A linear change of variables over $\Q$
allows us to assume that
the
pair of skew lines
is given by
$x_0=x_1=0$
and
$x_2=x_3=0$.
Therefore $X$
is given by
$C_1+C_2=0$, where
$C_1$ is linear in $x_0, x_1$ and quadratic in $x_2, x_3$
and the opposite is true for $C_2$,
a statement which immediately shows
\eqref{model}.
It follows from the non-singularity of the surface that the 
polynomials $a$ and $b$ are not identically zero. 
Then under a suitable linear change of variables, if necessary, we may have $a_0b_0\neq0$.

Let $k_1$
and $k_2$
be the greatest common divisor of the
the six
integers
$b_j,e_j  \left(j=0,1,2\r)$
and
$a_i,d_i,f_i \left(i=0,1\r)$
respectively.
Then the linear change of variables
\[\left(x_0,x_1,x_2,x_3\r)
\mapsto
\left(k_2 x_0,k_2 x_1,k_1 x_2,k_1 x_3\r)
\]
shows that we can assume that $k_1=k_2=1,$
which proves $\left(1\r)$.
In order to prove claim
$\left(2\r)$, let us define the integers
\[\alpha=1+\max\{\nu_2(a_0)+\nu_2(b_0),2\nu_2(b_0)\} 
\text{ and }
\beta=1+\max\{\nu_2(a_0)+\nu_2(b_0),2\nu_2(a_0)\}.\]
Then the transformation
\[\left(x_0,x_1,x_2,x_3\r)
\mapsto
\big(
2^{\nu_2(a_0)} x_0,2^\beta x_1,2^{\nu_2(b_0)} x_2,2^{\alpha} x_3
\big)
\]
reveals that
property $\left(2\r)$ holds for the prime $2$.
An identical argument
for the prime $3$
concludes the proof of the lemma.  
\end{proof}
We turn our attention to the
conic bundle structure present in $X.$ Writing $F=x_0Q_0-x_1Q_1,$
where $Q_i$ are integral
quaternary quadratic forms,
we see that $X$ is equipped with a dominant morphism
$\pi:X \to \P_{\Q}^1 $
such that
\[ \pi\!\left(x\r)=\begin{cases}
   [x_0:x_1] & \text{if} \ \left(x_0,x_1\r)\neq 0,  \\
   [Q_1\!\left(x\r):Q_0\!\left(x\r)] & \text{if} \ \left(Q_1\!\left(x\r),Q_0\!\left(x\r)\r) \neq 0.
  \end{cases}
\]
Lemma~\ref{notdead} reveals that the fibres $\pi^{-1}\!\left(s:t\r)$
are the conics $Q_{s,t}=0$
given by
\[ Q_{s,t}(x,y,z):=
a\!\left(s,t\r)\!x^2+
d\!\left(s,t\r)\!xz+
f\!\left(s,t\r)\!z^2+
b\!\left(s,t\r)\!xy+
e\!\l(s,t\r)\!zy.
\]
Their discriminant
\[
\Delta\!\l(s,t\r)\!
=\!
\l(
ae^2 + f b^2- b d e
\r)\!
\l(s,t\r)
\]
is a quintic binary form and
is separable
owing to the non-singularity of $X$ (see
\cite[80, II.6.4, Proposition 1]{shafa}).
\begin{lemma}
The following polynomials are not identically zero,
\[a,b,e,f,d^2-4af.
\]
Furthermore the forms $a,d,f$ do not share a common polynomial
divisor and the same holds
for $b$ and $e$.
\end{lemma}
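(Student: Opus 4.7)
The strategy for all seven assertions is to assume the opposite and exhibit a singular point of $X(\bar\Q)$, contradicting the smoothness of $X$. Two ingredients will be used throughout: direct gradient computations at special points of $\P^3$ (most frequently the vertices $[0{:}0{:}1{:}0]$, $[0{:}0{:}0{:}1]$ and points on the skew lines $x_0=x_1=0$ and $x_2=x_3=0$), together with the separability of the discriminant quintic $\Delta=ae^2+fb^2-bde$ that is already recorded in the text.

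Non-vanishing of $a$ and $b$ is immediate from Lemma~\ref{notdead}, which ensures $a_0b_0\neq 0$. For $f\not\equiv 0$, observe that $[0{:}0{:}0{:}1]$ lies on $X$ automatically and a direct computation gives its gradient as $(f_0,f_1,0,0)$, so $f\equiv 0$ would make it singular. For $e\not\equiv 0$, assume $e\equiv 0$; then $\Delta=fb^2$, and since $b$ is a non-zero binary form of degree $2$ it admits a root in $\bar\Q$ which appears in $\Delta$ with multiplicity $\geq 2$, contradicting separability.

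The case $d^2-4af\not\equiv 0$ is subtler. Because $a$ and $f$ are linear forms in $x_0,x_1$, a short elementary computation (comparing coefficients) shows that $d^2=4af$ in $\Q[x_0,x_1]$ forces $a$ and $f$ to be proportional over $\Q$ with ratio a rational square, so $f=\gamma^2 a$ and $d=\pm 2\gamma a$ for some $\gamma\in\Q$. The change of variable $u=x_2\pm\gamma x_3$, valid over $\Q$, rewrites $F$ as $au^2+bu+(e\mp\gamma b)x_3$, which has no $x_3^2$ term. At $[0{:}0{:}0{:}1]$ in the new coordinates one checks that all four partials vanish (each surviving term carries a factor of $b$, $e\mp\gamma b$, or one of their partials, and all of these vanish at $x_0=x_1=0$), producing a singular point.

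Finally, for the two coprimality statements: if $a,d,f$ have a common polynomial divisor, then, being linear forms in two variables, they are all scalar multiples $a=\alpha\ell$, $d=\delta\ell$, $f=\phi\ell$ of a common non-zero linear form $\ell=\ell_0x_0+\ell_1x_1$, with $(\alpha,\delta,\phi)\neq\mathbf{0}$. The line $x_0=x_1=0$ is contained in $X$, and at $(0,0,x_2,x_3)$ the partials $\partial_0 F,\partial_1 F$ reduce respectively to $\ell_0,\ell_1$ times the non-zero binary quadratic $\alpha x_2^2+\delta x_2 x_3+\phi x_3^2$, while $\partial_2 F,\partial_3 F$ vanish; any $\bar\Q$-zero of this quadratic then gives a singular point. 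Similarly, if $b$ and $e$ shared a polynomial divisor they would share a root $(s_0,t_0)\in\bar\Q^2\setminus\{0\}$, and the gradient at $(s_0,t_0,0,0)\in X$ would equal $(0,0,b(s_0,t_0),e(s_0,t_0))=\mathbf{0}$. The most delicate step I anticipate is the verification for $d^2-4af\not\equiv 0$: one must set up the proportionality over $\Q$ (not merely over $\bar\Q$) so that the change of coordinates is defined over $\Q$, and then carefully track the vanishing of every partial at the relevant vertex.
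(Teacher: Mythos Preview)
Your argument is correct throughout, and for the claims $a,f\not\equiv 0$ and $\gcd(a,d,f)=1$ it is essentially the paper's proof (the paper computes $\nabla F(0,0,x_2,x_3)=(p_1(x_2,x_3),p_2(x_2,x_3),0,0)$ with $p_i=a_i x_2^2+d_i x_2 x_3+f_i x_3^2$, which is exactly your $\ell_i(\alpha x_2^2+\delta x_2 x_3+\phi x_3^2)$ under the proportionality hypothesis).

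Two of your sub-proofs take a genuinely different route. For $\gcd(b,e)=1$ you produce a singular point $(s_0,t_0,0,0)$ directly, whereas the paper instead observes that a common factor of $b$ and $e$ would appear to second order in $\Delta=ae^2+fb^2-bde$, contradicting separability. Your argument is self-contained; the paper's leans on the discriminant but is one line. For $d^2-4af\not\equiv 0$ the contrast is sharper: the paper first establishes $\gcd(a,d,f)=1$ and $a,f\not\equiv 0$, and then notes that in the UFD $\Q[s,t]$ the relation $d^2=4af$ is impossible (any irreducible factor of the right side would divide $d$, hence divide all three of $a,d,f$). Your explicit proportionality computation and change of variable $u=x_2\pm\gamma x_3$ are correct and give an honest singular point at $[0{:}0{:}\mp\gamma{:}1]$, but the UFD argument dispatches the case in a sentence once the other claims are in hand. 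Reordering your proof so that the coprimality of $a,d,f$ comes before $d^2-4af$ would let you replace the ``subtler'' step entirely.
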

\begin{proof}
If the forms $a,d,f$ had a common divisor in $\Q[s,t]$ then the polynomials
\[p_1\!\l(x,y\r)=
a_0 x^2+
d_0 xy+
f_0 y^2
\
\text{and}
\
p_2\!\l(x,y\r)
=
a_1 x^2+
d_1 xy+
f_1 y^2,
\]
would share a common
zero $[\alpha:\beta] \in \P^{1}\!\l(\overline{\Q}\r)$.
We would then obtain a singularity on the cubic surface,
since
$
\nabla{F}(0,0,x,y)
=
\l(
p_1\l(x,y\r),
p_2\l(x,y\r),
0,0
\r)$.
If $a=0$ or $f=0$ then the partial derivatives of $F$ vanish
at the points $(0,0,1,0)$
and $(0,0,1,0)$ respectively.
These observations prove that $d^2\neq 4af$ since $\Q[s,t]$ is a unique factorisation ring.
The fact that the discriminant $\Delta\!\l(s,t\r)$
is separable   implies that
$b$ and $e$ are coprime forms.
In particular they can neither be identically zero, which finishes our proof.
\end{proof}
We deduce that
the
resultant
$W_0:=
\text{Res}\!\l(b,e\r)$
is a non-zero integer.
We furthermore get that
at least one
pair of the linear forms
$a,d,f$ has a non-zero resultant, say $W_1$.

Define for $(s,t) \in \Zp^2$
the forms
\[G_{s,t}\l(u,v\r):=b\l(s,t\r)u+e\l(s,t\r)v
\
\text{ and }
\
H_{s,t}\l(u,v\r):=a\l(s,t\r)u^2+d\l(s,t\r)uv+f\l(s,t\r)v^2,\]
where the polynomials
$a,\ldots,f$ were defined in Lemma~\ref{notdead}.

Each conic $Q_{s,t}=0$
contains the obvious point $(0,1,0)$,
a fact which can be used to provide
the following parametrisation of $Q_{s,t}(x,y,z)=0$,
\[
(x,y,z)=
(u G_{s,t}(u,v),
-H_{s,t}(u,v),
v G_{s,t}(u,v)).
\]

The weighted sieve
will allow us to
to
show that for appropriate values
of
$s,t$
both forms $G_{s,t}(u,v)$ and $H_{s,t}(u,v)$ attain almost prime values.
We begin by establishing certain facts required for the sieving process.
\begin{lemma}
\label{prime>3}
For each prime $p$
there exists $s_0,t_0\in
\l(\Z/p\Z\r)^\ast$
such that
both integers
$$
\gcd(b\!\l(s_0,t_0\r),e\!\l(s_0,t_0\r))
\ \text{ and } \ 
\gcd(a\!\l(s_0,t_0\r),d\!\l(s_0,t_0\r),f\!\l(s_0,t_0\r))$$
are coprime to $p$.
\end{lemma}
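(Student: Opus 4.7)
The plan is to split according to the size of the prime $p$. For $p\in\{2,3\}$, I would exploit the integrality conditions furnished by part $(2)$ of Lemma~\ref{notdead}: all twelve coefficients appearing in $a,d,f,b,e$ are divisible by $6$ except for $a_0$ and $b_0$, while $a_0b_0$ is coprime to $6$. Reducing modulo $p$ this collapses the forms to $a\equiv a_0x_0$, $b\equiv b_0x_0^2$, and $d\equiv f\equiv e\equiv 0\pmod{p}$. Taking $(s_0,t_0)=(1,1)$ then gives $a(1,1)\equiv a_0\not\equiv 0$ and $b(1,1)\equiv b_0\not\equiv 0\pmod{p}$; since $p$ divides a gcd of integers only if it divides each of them, both gcds in the statement are automatically coprime to $p$.

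For $p\geq 5$ I would proceed by a counting argument inside $(\Z/p\Z)^*\times(\Z/p\Z)^*$, a set of size $(p-1)^2$. Call $(s_0,t_0)$ a \emph{bad pair of the first kind} if $p$ divides both $b(s_0,t_0)$ and $e(s_0,t_0)$, and a \emph{bad pair of the second kind} if $p$ divides $a(s_0,t_0)$, $d(s_0,t_0)$ and $f(s_0,t_0)$; the lemma amounts to producing a pair in $(\Z/p\Z)^*\times(\Z/p\Z)^*$ that is neither. Part $(1)$ of Lemma~\ref{notdead} says that the six coefficients $b_0,b_1,b_2,e_0,e_1,e_2$ have content $1$, and likewise for $a_0,a_1,d_0,d_1,f_0,f_1$, so $b$ and $e$ cannot both reduce to $0$ modulo $p$, and the same holds for $a,d,f$.

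To bound bad pairs of the first kind, observe that each such pair corresponds to a root $(s_0:t_0)\in\P^1(\F_p)$ of a nonzero binary quadratic form: the one among $b,e$ which does not vanish identically mod $p$, or both of them simultaneously when neither reduces to $0$. A nonzero binary quadratic has at most $2$ projective roots over $\F_p$, and each projective root lifts to at most $p-1$ affine pairs in $(\Z/p\Z)^*\times(\Z/p\Z)^*$, so bad pairs of the first kind number at most $2(p-1)$. Applying the same reasoning to the nonzero linear form among $a,d,f$ yields at most one projective root, and hence at most $p-1$ bad pairs of the second kind. Adding the two bounds gives at most $3(p-1)$ bad pairs total, whereas $(p-1)^2-3(p-1)=(p-1)(p-4)>0$ for $p\geq 5$, so a good pair $(s_0,t_0)$ must exist.

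The only real obstacle is keeping the bookkeeping straight in the degenerate reductions modulo $p$ (for instance when $b\equiv 0\pmod{p}$ but $e\not\equiv 0$, or when one or two of $a,d,f$ vanish identically mod $p$): in each such case Lemma~\ref{notdead}(1) guarantees that at least one of the relevant forms remains nonzero mod $p$, and the degree bounds $2$ (for $b,e$) and $1$ (for $a,d,f$) continue to control the count of projective zeros, so the argument above goes through uniformly.
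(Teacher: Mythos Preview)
Your proposal is correct and follows essentially the same approach as the paper: handle $p\in\{2,3\}$ directly via part~(2) of Lemma~\ref{notdead} with $(s_0,t_0)=(1,1)$, and for $p\geq 5$ use part~(1) to guarantee that at least one of $b,e$ (resp.\ $a,d,f$) is nonzero modulo $p$, then bound the bad pairs by $2(p-1)+(p-1)=3(p-1)<(p-1)^2$. The paper phrases the large-prime step as bounding $\#\{(s,t)\in((\Z/p\Z)^*)^2: p\mid a(s,t)b(s,t)\}$ rather than directly counting bad pairs of each kind, but this is the same argument.
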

\begin{proof}
The second part 
of Lemma~\ref{notdead}
implies that one can take
$s_0\equiv t_0 \equiv 1 \mod p$
when $p=2$ or $3$.
Let $p$ be any prime larger than $3$.
The first part
of Lemma~\ref{notdead}
states that
$p$ cannot divide
all six
integers
$b_j,e_j  \l(j=0,1,2\r)$
so we may assume by symmetry that
$p
\nmid
\gcd(b_0,b_1,b_2).$
We may similarly
assume that
$p\nmid \gcd(a_0,a_1).$
We proceed to show the validity of
the bounds
\[\sharp\{s,t \in
\l(\Z/p\Z\r)^\ast\!\!, \ p|b(s,t)\}
\leq 2\l(p-1\r)
\text{ and } \ 
\sharp\{s,t
\in
\l(\Z/p\Z\r)^\ast
\!\!, \
p|a(s,t)\}
\leq p-1.
\]
The first bound is obvious in the case $p|(b_0,b_2).$
In the opposite case we may assume that $p\nmid b_0$
so
the number of solutions of
$b(s,t)\equiv 0 \mod{p}$
 is the same as
the number of solutions of
$
\l(2b_0s+b_1t\r)^2\equiv \l(b_1^2-4b_0b_2\r)t^2 \mod{p}$,
which is at most $2\l(p-1\r)$.
The second bound follows by the fact that $p\nmid \l(a_0,a_1\r).$
Combining the two bounds
implies that
\[\sharp\{
s,t \in
\l(\Z/p\Z\r)^\ast
\!\!, \
p|a\!\l(s,t\r)b\!\l(s,t\r) \}
\leq 3\l(p-1\r),
\]
which is strictly smaller than
$\l(p-1\r)^2$ for $p>3$
and that concludes the proof of the lemma.
\end{proof}

\begin{lemma}
\label{gauss}
There exists a square-free
$W$
and
integers
$s_0,t_0$
with
$(s_0t_0,W)=1$
such that
 whenever
the
coprime
integers $s$ and $t$
satisfy
$
(s,t)\equiv (s_0,t_0)\mod{W}
$,
then both integer forms
$G_{s,t},
H_{s,t}
$
are primitive.
\end{lemma}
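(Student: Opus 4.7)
The plan is to control primitivity one prime at a time, splitting primes into a large range, at which primitivity will be automatic from non-vanishing of resultants, and a finite small range, at which the required residue class is produced by Lemma~\ref{prime>3}; these local data will then be patched by the Chinese Remainder Theorem.

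First I would let $W_1$ be a non-zero resultant of two of the forms $a,d,f$, whose existence was established above, and set $W$ to be the product of the distinct prime divisors of $6 W_0 W_1$, where $W_0=\text{Res}(b,e)$. This is square-free by construction. For every prime $p\nmid W$ and every coprime pair $(s,t)\in\Z^2$, the relation $p\nmid W_0$ guarantees that $b$ and $e$ share no projective zero in $\P^1(\F_p)$, and similarly $p\nmid W_1$ prevents the two chosen members of $\{a,d,f\}$ from sharing a zero in $\P^1(\F_p)$. Since a coprime integer pair $(s,t)$ reduces modulo $p$ to a genuine point of $\P^1(\F_p)$, we obtain $p\nmid\gcd(b(s,t),e(s,t))$ and $p\nmid\gcd(a(s,t),d(s,t),f(s,t))$, so primitivity at all primes outside $W$ comes for free.

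For each $p\mid W$ I would then invoke Lemma~\ref{prime>3} to produce a pair $(s_0^{(p)},t_0^{(p)})\in(\Z/p\Z)^{\ast}\times(\Z/p\Z)^{\ast}$ at which both $\gcd(b,e)(s_0^{(p)},t_0^{(p)})$ and $\gcd(a,d,f)(s_0^{(p)},t_0^{(p)})$ are coprime to $p$, and use the Chinese Remainder Theorem to lift these local data to a single pair $(s_0,t_0)\in\Z^2$ with $s_0\equiv s_0^{(p)}$, $t_0\equiv t_0^{(p)}$ modulo $p$ for every $p\mid W$. Since each $s_0^{(p)},t_0^{(p)}$ is a unit mod $p$, the lifted pair satisfies $\gcd(s_0 t_0,W)=1$, as required. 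For any coprime $(s,t)$ with $(s,t)\equiv(s_0,t_0)\pmod{W}$, the values $a(s,t),\ldots,f(s,t)$ reduce modulo each $p\mid W$ to their values at $(s_0^{(p)},t_0^{(p)})$, so both gcds remain coprime to every $p\mid W$; combined with the large-prime argument, both gcds equal $1$, and the forms $G_{s,t}$ and $H_{s,t}$ are primitive.

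The argument is largely bookkeeping, so I do not anticipate a serious obstacle. The only point requiring mild care is that the resultant input $p\nmid\text{Res}$ must be converted into the statement that there is no simultaneous zero at a \emph{coprime} pair $(s,t)$, which is immediate from the definition of the resultant as the vanishing locus of a common projective zero together with the fact that $\gcd(s,t)=1$ ensures $(s,t)\bmod p$ is a well-defined point of $\P^1(\F_p)$.
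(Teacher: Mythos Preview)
Your proposal is correct and follows essentially the same approach as the paper: define $W$ from the primes dividing the resultants $W_0,W_1$, use Lemma~\ref{prime>3} together with the Chinese Remainder Theorem to fix residue classes at the primes dividing $W$, and treat the remaining primes via the resultant property. The only cosmetic differences are that you throw the primes dividing $6$ into $W$ (harmless) and phrase the large-prime step geometrically via common zeros in $\P^1(\F_p)$, whereas the paper writes out the elimination identities $b\,g_1+e\,g_2=W_0 t^{\mu_1}$, $b\,g_3+e\,g_4=W_0 s^{\mu_2}$ explicitly.
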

\begin{proof}
We define $W$ as the product of the primes appearing in $W_0$ and $W_1.$
Combining
the Chinese remainder theorem
and Lemma~\ref{prime>3} for the primes dividing $W$
provides
congruence classes $\mod{W}$, say $s_0$ and $t_0$.
Assume that
the coprime
integers $s$ and $t$ satisfy
\[(s,t)\equiv (s_0,t_0)\mod{W}\]
and suppose for a contradiction that $G_{s,t}$ is not primitive
so that there exists a prime $p$ that divides both
$b(s,t)$
 and $e(s,t)$.
Elimination theory
applied to
$b(x,1),e(x,1)$
and
$b(1,x),e(1,x)$
shows that there are integer binary forms
$g_i(s,t)\ \l(i=1,\ldots,4\r)$
and non-negative
integers $\mu_j \ \l(j=1, 2\r)$ such that
\[b(s,t) g_1(s,t)+e(s,t)g_2(s,t)=W_0t^{\mu_1}
\
\text{ and } 
\
b(s,t) g_3(s,t)+e(s,t)g_4(s,t)=W_0 s^{\mu_2}.\]
The coprimality of $s$ and $t$ implies that
$p$ divides $W_0$ and this contradicts the choice of $s_0,t_0$ by Lemma~\ref{prime>3}.
A similar argument works proves that $H_{s,t}$ is
also primitive
and thus the proof is complete.
\end{proof}
Define the following binary integer forms,
\begin{align*}
\Phi_1\l(u,v\r):&=u,\\
\Phi_2\l(u,v\r):&=v,\\
\Phi_3\l(u,v\r):&=bu+ev,\\
\Phi_4\l(u,v\r):&=
au^2
+duv
+fv^2
\end{align*}
and let
\[\Phi:=
\Phi_1
\Phi_2
\Phi_3^2
\Phi_4^2.
\]
Our aim henceforth is
to show that the
form $\Phi\!\l(u,v\r)$
attains
almost prime values infinitely often.
To this aim it is natural to
assume the following local conditions,
\begin{alignat}{2}
&6|\l(e,d,f\r), \ &
\
&\gcd(6,ab)=1, \label{acdc1}\\
&\gcd(b,e)=1,\   &
\
&\gcd(a,d,f)=1, \label{acdc2}\\
 &D \  \text{is a non-zero integer,}\label{acdc3}
\end{alignat}
where
\[D:=
\text{rad}
\Big(
2\!\cdot\!
3\!\cdot\!
5\!\cdot\!
ab
ef
\!
\l(d^2-4af\r)
\l(ae^2 + f b^2- b d e\r)
\Big).
\]
There are two cases to consider according to the splitting behavior of the quadratic form $\Phi_4$
over the rationals. Although both cases can be treated via the weighted sieve
we shall use
Theorem~\ref{gtz}
for the former case. The reason is that this approach provides special examples of smooth cubic surfaces (see~\eqref{ter})
for which the saturation number is particularly small.
As an application
we deduce the following lemma.
\begin{lemma}
\label{666}
Assume that the integers
$a,b,d,e,f$
satisfy
conditions
\eqref{acdc1}-\eqref{acdc3}
and that $d^2-4af$ is a
non-zero
integer square.
Let $\mathcal{R}$ be any
non-empty
box in $\R^2.$
Then
\[
\lim_{B\to+\infty}
\sharp\{
\l(u,v\r) \in
\Z^2
\!
\cap
\!
\mathcal{R}
B,
\
\Omega(\Phi(u,v))\leq 8
\}
=
+\infty
.\]
\end{lemma}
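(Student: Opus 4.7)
The plan is to exploit the hypothesis that $d^2-4af$ is a nonzero integer square in order to split $\Phi_4$ as a product of two primitive integer linear forms, and then apply Theorem~\ref{gtz} to the resulting five pairwise non-proportional linear forms in $(u,v)$.

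First I would set $s := \sqrt{d^2-4af} \in \Z_{>0}$. Since $6 \mid d$ and $s^2 \equiv d^2 \pmod{4}$, both $d$ and $s$ are even, so that the identity $4a\,\Phi_4 = (2au+(d-s)v)(2au+(d+s)v)$ rewrites as $a\,\Phi_4 = L_+ L_-$ in $\Z[u,v]$, with $L_\pm := au + \tfrac{d\pm s}{2}v$. Since $\gcd(a,d,f) = 1$ by~\eqref{acdc2}, the form $\Phi_4$ is primitive, and the multiplicativity of content allows one to extract primitive integer linear forms $L_\pm'$ with $\Phi_4 = \epsilon L_+' L_-'$ for some $\epsilon \in \{\pm 1\}$.

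Next I would verify that the five forms $u$, $v$, $\Phi_3$, $L_+'$, $L_-'$ are pairwise non-proportional. The non-vanishing of $a, b, e, f$ and of $d^2-4af$, all guaranteed by~\eqref{acdc3}, handle every pair but $L_\pm' \propto \Phi_3$; this last case reduces, after cross-multiplication and squaring, to the relation $ae^2 + fb^2 - bde = 0$, which is itself ruled out by~\eqref{acdc3}. Checking the admissibility hypothesis of Theorem~\ref{gtz} is then straightforward for $p \geq 5$: each of the five primitive forms cuts out a line through the origin of $\mathbb{F}_p^2$, so their union contains at most $5p - 4$ points, which is strictly less than $p^2$ once $p \geq 5$. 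The primes $p \in \{2, 3\}$ are handled using~\eqref{acdc1}: since $p \mid (d,e,f)$ and $p \nmid ab$, one has $\Phi_4 \equiv au^2$ and $\Phi_3 \equiv bu \pmod{p}$, which forces both $L_+', L_-'$ to reduce modulo $p$ to a unit multiple of $u$, and hence $(1, 1)$ avoids every zero locus modulo $p$.

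With the hypotheses of Theorem~\ref{gtz} in place, I would shrink $\mathcal{R}$ to an open sub-box $\mathcal{R}'$ lying in its interior and missing all five zero-lines through the origin, and then sign-adjust each form so that the five become positive on $\mathcal{R}'B$ for every $B \geq 1$ (this affects neither primality of absolute values nor the admissibility verification). Applying Theorem~\ref{gtz} on $\mathcal{R}'B$ then yields a count of pairs $(u, v) \in \mathcal{R}'B \cap \Z^2$ for which each of the five forms takes a prime value that tends to infinity with $B$. For every such pair,
\[
\Phi(u,v) = \Phi_1 \Phi_2 \Phi_3^2 \Phi_4^2 = \pm u\,v\,\Phi_3(u,v)^2\,L_+'(u,v)^2\,L_-'(u,v)^2,
\]
so $\Omega(\Phi(u,v)) = 1+1+2+2+2 = 8$, and the lemma follows. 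I expect the main subtlety to be the admissibility check at $p \in \{2, 3\}$, where the reduction of $\Phi_4$ to $au^2$ causes $L_+'$ and $L_-'$ to collapse to proportional lines modulo $p$, and where one must use the explicit divisibility conditions in~\eqref{acdc1} in order to recover admissibility; everything else reduces to direct polynomial manipulations and to standard inclusion-exclusion on lines through the origin of $\mathbb{F}_p^2$.
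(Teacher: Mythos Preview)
Your proposal is correct and follows essentially the same route as the paper: factor $\Phi_4$ into two integer linear forms using the square discriminant, then apply Theorem~\ref{gtz} to the resulting five linear forms $u,v,\Phi_3,L_+',L_-'$ after checking pairwise non-proportionality and local admissibility. The only cosmetic differences are that the paper factors $a=a_1a_2$ explicitly to write $\Phi_4=(a_2u+\tfrac{d-\delta}{2a_1}v)(a_1u+\tfrac{d+\delta}{2a_2}v)$ rather than invoking content and Gauss's lemma, and its admissibility count for $p\geq 5$ is carried out over $(\Z/p\Z)^*$ rather than all of $\mathbb{F}_p^2$; your treatment of the cases $p\in\{2,3\}$ and of the non-proportionality $L_\pm'\not\propto\Phi_3$ via $ae^2+fb^2-bde\neq 0$ is in fact more explicit than the paper's.
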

\begin{proof}
Assume that
$d^2-4af=\delta^2$ for some natural $\delta$.
This implies that there are integers $a_1,a_2$ with $a=a_1a_2$
that divide the integers
$\l(d-\delta\r)/2,
\l(d+\delta\r)/2$
respectively.
The identity
\[\Phi_4(u,v)=\l(a_2u+\frac{(d-\delta)}{2a_1}v\r)
\l(a_1u+\frac{(d+\delta)}{2a_2}v\r)\]
shows that $\Phi_4$ splits into two integer linear binary forms, say $L_4, L_5$.
Note that $\Phi_4$
has no fixed prime divisor owing to~\eqref{acdc1} and~\eqref{acdc2}
and thus this property also holds for both linear forms.

We apply Theorem~\ref{gtz}
with
$
L_i=\Phi_i$
for
$
i=1,2,3.
$
The condition~\eqref{acdc3}
guarantees that the linear forms are pairwise non-proportional.
Let us verify the remaining condition of
Theorem~\ref{gtz}
by contradiction. Assume that for every
prime $p$
and all integers $u,v$
there exists an index $1\leq i \leq 5$
such that $p|L_i(u,v).$
We may show that we only need to restrict attention to
the primes $p=2,3$ since if $p\geq 5$ then
\[
\sharp\{u,v \in \l(\Z/p\Z\r)^{*},p \nmid \l(L_3L_4L_5\r)\l(u,v\r)\}\leq 3\l(p-1\r),
\]
which is strictly smaller than $(p-1)^2.$
Regarding the primes $p=2,3$
we note that conditions~\eqref{acdc1}
and
\eqref{acdc2}
ensure that we get a contradiction.

Theorem~\ref{gtz} provides
a choice of
signs
$s_i \in \{1,-1\}, i=1,\ldots,5,$
such that the $5$ linear form $s_i L_i$ attain prime values infinitely often.
Recalling the extended definition of $\Omega$
given in the introduction proves the statement of our lemma.
\end{proof}

We next
apply a level of distribution result
\cite[Th.3]{marasingha}
to prove the corresponding result in the remaining case.
\begin{lemma}
\label{weighted}
Assume that the integers
$a,b,d,e,f$
satisfy
conditions
\eqref{acdc1}-\eqref{acdc3}, that $d^2-4af$ is not an integer square
and let
$\mathcal{R}$ be a box of $\R^2.$
Then
\[\sharp\{
\l(u,v\r) \in
\Z^2
\!
\cap
\!
\mathcal{R}
B
,
\
\Omega(\Phi(u,v))\leq 30
\}
\gg \frac{B^2}{\l(\log B\r)^4},
\]
for $B \geq 2$.
The implied constant in the lower bound
depends at most on the coefficients of $\Phi$ and
$\mathcal{R}$.
\end{lemma}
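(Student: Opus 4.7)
The plan is to apply the weighted sieve of Theorem~\ref{appear} directly to the sequence of integer values of $\Phi$ restricted to a suitable arithmetic progression in $\Z^2 \cap B\mathcal{R}$, with the sharper level of distribution from \cite[Th.3]{marasingha} playing the role that the elementary box-counting argument plays in the proof of Proposition~\ref{weighted step 0}. Under the hypothesis that $d^2 - 4af$ is not a perfect square, the binary quadratic form $\Phi_4$ is irreducible over $\Q$, and together with \eqref{acdc1}--\eqref{acdc3} this implies that $F := \Phi_1 \Phi_2 \Phi_3 \Phi_4$ is a binary integer form of degree $5$ with no repeated polynomial factors. Lemma~\ref{gauss} supplies a modulus $W$ and a residue class $(u_0, v_0) \bmod W$ on which $F$ is primitive, while Lemma~\ref{lem:waltzfordebby} bounds the largest positive integer $D_0$ dividing $\Phi(u,v)$ for every $(u,v)$ in this class.

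With these choices in place, I would form $\mathcal{A} := \{\Phi(u,v)/D_0 : (u,v) \in \Z^2 \cap B\mathcal{R}, \, (u,v) \equiv (u_0,v_0) \bmod W\}$ and introduce $\omega(d)$ as the multiplicative function counting the density of solutions to $\Phi(u,v) \equiv 0 \pmod d$. Since the squarefree kernel of $\Phi$ equals $F$, on squarefree $d$ coprime to $D_0$ the function $\omega$ coincides with the one attached to $F$. Counting roots of the linear factors $\Phi_1, \Phi_2, \Phi_3$ together with Chebotarev density for the quadratic field cut out by $\Phi_4$ yields condition (E) of Theorem~\ref{appear} with sieve dimension $\kappa = 4$. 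Conditions (A) and (B) follow from the construction of $D_0$ and from Lemma~\ref{gauss}, while condition (F) is handled through Lemma~\ref{lem:bhargavakos} exactly as in the proof of Proposition~\ref{weighted step 0}. The crux is condition (C): this is precisely a level-of-distribution statement for $F$, and here I would invoke \cite[Th.3]{marasingha} to obtain an exponent $\alpha$ much larger than the $1/2$ that the crude argument of Proposition~\ref{weighted step 0} provides. Condition (D) is then satisfied with $\mu$ slightly above $4/\alpha$, since $|\Phi(u,v)| \ll B^8 \asymp Y^4$ for $Y \asymp B^2$.

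With $\kappa = 4$ one has $\beta_\kappa \leq 15$ by \eqref{beta}, so choosing $v = \beta_\kappa u$ and using \eqref{rr} in place of \eqref{r} gives an integer $r$ roughly bounded by $\mu - 1 + (\mu - 4)(1 - 1/15) + 5 \log 15$. With Marasingha's $\alpha$ sufficiently close to $1$ (so $\mu$ is close to $4$) this is of size roughly $17$, and adding $\Omega(D_0)$, which is itself explicitly controlled by Lemma~\ref{lem:waltzfordebby}, keeps the total comfortably below $30$. The main conclusion of Theorem~\ref{appear} then furnishes the claimed lower bound of order $B^2/(\log B)^4$, the exponent $-4$ being precisely the sieve dimension $\kappa$ via the Mertens estimate for $\prod_{p<Y^{1/v}}(1-\omega(p)/p)$.

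The principal obstacle I expect is the numerical bookkeeping in the final step: tracking the contributions of $\mu$, $\beta_4$, and $\Omega(D_0)$ carefully enough to certify that the combined total stays below $30$, and verifying that \cite[Th.3]{marasingha} applies in our setting of a reducible binary form of degree $5$, built from three linear factors and one irreducible quadratic factor, rather than a single irreducible form. Everything else reduces to routine verification of the hypotheses of the weighted sieve along the template already developed for Proposition~\ref{weighted step 0}.
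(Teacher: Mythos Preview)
Your proposal has a genuine gap and a numerical misconception, and together they prevent the argument from going through.

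The fatal issue is that you sieve $\Phi=\Phi_1\Phi_2\Phi_3^2\Phi_4^2$ directly. Because $\Phi_3$ and $\Phi_4$ occur to the second power, for every prime $p$ (outside a finite set) the condition $p\mid \Phi_3(u,v)\Phi_4(u,v)$ already forces $p^2\mid \Phi(u,v)$. Hence
\[
\#\mathcal{A}_{p^2}\;\gg\;\frac{Y}{p}\qquad\text{rather than}\qquad \#\mathcal{A}_{p^2}\;\ll\;\frac{Y}{p^{2}},
\]
and summing over $Y^{1/v}\le p<Y^{1/u}$ gives a quantity of order $Y$, not $Y(\log Y)^{-\kappa-1}$. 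Condition~(F) of Theorem~\ref{appear} therefore fails, and the weighted sieve does not apply. (Correspondingly, you cannot invoke Lemma~\ref{lem:bhargavakos}: its hypothesis that the polynomial has no repeated factors is violated by $\Phi$.)

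Separately, your expectation that \cite[Th.~3]{marasingha} furnishes a level of distribution with $\alpha$ close to~$1$ is incorrect. After correcting a minor slip in Marasingha's error term one obtains only $\alpha=\tfrac12-\eta$, which is exactly the exponent that the elementary box-counting of Proposition~\ref{weighted step 0} already gives for $m=1$. With $\alpha\approx\tfrac12$ and $|\Phi|\asymp Y^{4}$ one has $\mu\approx 8$, not $\mu\approx 4$, so even ignoring the failure of~(F) your numerical endgame via~\eqref{rr} would land near $22$ rather than $17$.

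The paper sidesteps both problems by sieving the squarefree kernel $\Phi'=\Phi_1\Phi_2\Phi_3\Phi_4$ (degree~$5$, no repeated factor). With $\kappa=4$, $\alpha\approx\tfrac12$ and $|\Phi'|\asymp Y^{5/2}$ one gets $\mu\approx 5$, and the weighted sieve (in the sharper form used by Marasingha, following Liu--Sarnak) yields $\Omega(\Phi'(u,v))\le 16$ on a set of size $\gg B^{2}(\log B)^{-4}$. One then passes to $\Phi$ via the identity
\[
\Omega(\Phi(u,v))=2\,\Omega(\Phi'(u,v))-\Omega(uv),
\]
and discards the $O(B)$ pairs with $\Omega(uv)\le 1$ to conclude $\Omega(\Phi(u,v))\le 2\cdot 16-2=30$. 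This two-step manoeuvre, not a direct sieve on $\Phi$, is the missing idea.
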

\begin{proof}
Arguing as in
Lemma~\ref{prime>3}
for every prime $p|D$
and using the Chinese remainder theorem,
we get that
there are
integers $u_0,v_0 \mod{D}$
such that
\begin{equation}
\label{coprim}
\gcd\!\l(
\Phi\!\l(u_0,v_0\r)
,D\r)=1.
\end{equation}
Define the integer binary form
$
\Phi^\prime
:=
\Phi_{1}
\Phi_2
\Phi_3
\Phi_4
$
and
the subset of $\Z^2$,
\[\Psi:=\{\l(u,v\r)
\in
\Z^2:
\l(u,v\r)
\equiv
\l(u_0,v_0\r)
\mod{D}
\}.
\]
We write
$
\mathcal{A}':=\{\Phi'\l(u,v\r): \l(u,v\r) \in \mathcal{R}
B\!
\cap
\Psi
\}
$
and we let for
every non-zero integer $d$,
\[
\mathcal{A}'_d:=\{
a' \in \mathcal{A}'
:d|a'
\}
.\]
Let for any
$\b{c}=\l(c_1,\ldots,c_4\r) \in \mathbb{N}^4$,
$\rho\!\l(\b{c}\r)$ be the
the multiplicative
function given by
\[\rho\!\l(\b{c}\r)
:=
\sharp\{
\l(u,v\r)
\!
\mod{c_1\ldots c_4}:
c_i|
\Phi_i\l(u,v\r)
,
i=1,\ldots,4
\}.
\]
Define the following sets of primes,
\[\mathfrak{P}:=\{p \ \text{prime} :p \nmid D\}
\
\text{and}
\
\overline{\mathfrak{P}}:=\{p \ \text{prime}  :p |D\}
\]
and define the multiplicative function $\varpi\!\l(n\r)$
to be
supported on  square-free integers
and
\[\varpi\l(p\r):=
\mathds{1}_{\mathfrak{P}}(p)
\
p
\!
\sum_{\substack{c_i|p
\\
i=1,\ldots,4
\\
p|c_1\ldots c_4}}
\mu\!\l(p\r)
\mu\!\l(c_1\r)
\ldots
\mu\!\l(c_4\r)
\frac{\rho\!\l(c_1,\ldots,c_4\r)}{\l(c_1\ldots c_4\r)^2},
\]
where
$\mathds{1}_{\mathfrak{P}}$
is the characteristic function of
the primes in $\mathfrak{P}.$
Let
\[
Y:=B^2\frac{\text{vol}\!\l(\mathcal{R}\r)}{D^2}\]
and define
for $d\in \mathbb{N}$
the quantity
\[
R_{d}:=
\sharp
\mathcal{A}'_d
-\frac{\varpi\!\l(d\r)}{d}
Y
.\]

We need to verify the conditions
$\l(\text{A}\r)$-$\l(\text{E}\r)$
appearing
in
\cite[Th.3]{marasingha}
as well as condition $Q_0$ appearing in
\cite[Eq.(11.2)]{sievebook},
which does not seem to appear in~\cite{marasingha}.

Condition $(Q_0)$: Here we have to verify that
the estimate
\[
\sum_{z
< p
\leq y}
\sharp
\mathcal{A}'_{p^2}
\ll
\frac{Y \log Y}{z}
+y
\]
holds
for all $2\leq z <y$
with an implied constant that is independent of $z,y$ and $Y$.
The equality
\[
\mathcal{A}'_{p^2}
=
\Big\{
(u,v) \in \mathcal{R}B:
(u,v)\equiv (u_0,v_0) \! \mod{D},
p^2|
\Phi_1(u,v)
\ldots
\Phi_4(u,v)
\Big\}
\]
shows by~\eqref{coprim}
that
$\mathcal{A}'_{p^2}$
is empty if $p|D$.
Note
that since $\gcd(D,\Phi(u_0,v_0))=1$
and
$(u,v)\equiv (u_0,v_0) \! \mod{D}$
we get that
$\gcd(D,\Phi'(u,v))=1$
and hence no pair of integers $(u,v)$ with $\Phi'(u,v)=0$
is
counted by
$\mathcal{A}'_{p^2}$.
We therefore obtain that
$\sum_{z
< p
\leq y}
\sharp
\mathcal{A}'_{p^2}$
is
\begin{align*}
\ll
\sum_{i=1}^4
\sum_{
\substack{
z< p\leq y
\\
p\nmid D
}}
&\sharp \Big\{
(u,v) \in \mathcal{R}B, \Phi'(u,v) \neq 0,
\gcd(D,\Phi_i(u,v))=1,
p^2|\Phi_i(u,v)
\Big\}
\\
+
\sum_{\substack{1\leq i,j \leq 4
\\
i \neq j}}
&\sum_{
\substack{
z< p\leq y
\\
p\nmid D
}}
\sharp
\Big\{
(u,v) \in \mathcal{R}B,
\Phi'(u,v) \neq 0,
p|(\Phi_i(u,v),\Phi_j(u,v))
\Big\}
.\end{align*}
Recall the definition of $D$ given immediately after~\eqref{acdc3}
and note that
the primes dividing the resultants of each pair of forms $\Phi_i,\Phi_j$
divides $D$.
Hence,
since $p\nmid D$ in the previous sums,
we see that
\[
\sharp
\Big\{
(u,v) \in \mathcal{R}B,
p|(\Phi_i(u,v),\Phi_j(u,v))
\Big\}
\ll
\sharp
\Big\{
(u,v) \in \mathcal{R}B,
p|(u,v)
\Big\}
\ll
\frac{B^2}{p^2}+1.
\]
For $i=1$
we have 
$
\sharp\{
(u,v) \in \mathcal{R}B, \Phi'(u,v) \neq 0,
p^2|\Phi_i(u,v)
\}
\ll
\sharp \{
(u,v) \in \mathcal{R}B, uv\neq 0,
p^2|u
\}$,
which
is at most $\ll B^2p^{-2}$
and one obtains the same upper bound in the case $i=2$.
For $i=3$ we have the bound
$
\ll
\sharp\{
(u,v) \in \mathcal{R}B, (bu+ev)v\neq 0,
p^2|bu+ev\}
$
and the change of variables
$x_1:=bu+ev,x_2:=v$
reveals that
the quantity is bounded by
$\ll
\sharp\{
|x_1|,|x_2| \ll
B, x_1x_2\neq 0,
p^2|x_1
\}$,
which 
is easily seen to be 
at most
$\ll B^2p^{-2}$.
For $i=4$  we have
\[
\sharp \Big\{
(u,v) \in \mathcal{R}B, uv\Phi_4(u,v) \neq 0,
\gcd(D,\Phi_i(u,v))=1,
p^2|\Phi_i(u,v)
\Big\}
\ll
\sum_{\substack{1\leq
p^2n\ll B^2 \\ \gcd(pn,D)=1}}
r_{D_0}(p^2n),
\]
where the function $r_{D_0}(n)$
denotes the number of
representations
of an integer $n$ by
all binary quadratic forms
forming a set of representatives of discriminant
equal to \[D_0:=\text{disc}(\Phi_4)=d^2-4af.\]
Note that $D_0$ is a non-zero integer which is not a square
since $\Phi_4$ is irreducible and that $\gcd(n,D_0)=1$ holds
in the previous summation since all primes dividing $D_0$
also divide $D$ by its definition.
Using the fact that $r_{D_0}$ can be written as a Dirichlet convolution
of $1$ and a quadratic character we obtain the trivial bound
$r_{D_0}(n)\ll \tau(n)$,
where $\tau$ denotes the classical divisor
function.
Writing
$n=p^\nu n_0$
with $p\nmid n_0$
we
obtain
\[
\sum_{\substack{1\leq
p^2n\ll B^2 \\ \gcd(pn,D)=1}}
r_{D_0}(p^2n)
\ll
\sum_{\nu=0}^{\infty}
\tau(p^{2+\nu})
\sum_{\substack{
n_0
\ll B^2/p^{2+\nu}
\\
p\nmid n_0}
}
\tau(n_0)
\ll
\frac{B^2 \log B}{p^2}
\sum_{\nu=0}^{\infty}
\frac{\tau(p^{2+\nu})}{p^{\nu}}.
\]
The function
$
\sum_{\nu>0}
(3+\nu)
p^{-\nu}
=
(3-2/p)(1-1/p)^{-2} 
$
has modulus 
at most $8$ for $p\geq 2$.
Hence, we get that
$
\#\mathcal{A}'_{p^2}
\ll
1+\frac{B^2 \log B}{p^2}
$,
which along with the estimate
$\sum_{p>z}p^{-2}\ll
z^{-1}$
shows that
\[
\sum_{z<p\leq y}
\#\mathcal{A}'_{p^2}
\ll
\frac{B^2 \log B}{z}+y
.\]
This is the
required
estimate
since $Y$ is of order $B^2$.

Condition $\l(\text{A}\r)$:
It is shown in~\cite[\S 3.1,pg. 310]{marasingha}
that
$\varpi\!\l(p\r)<p$
for all $p>5$
and since $\{2,3,5\}\subseteq \overline{\mathfrak{P}}$
we get that
$\varpi\!\l(p\r)=0$
for $p=2,3$ and $5.$

Condition $\l(\text{B}\r)$:
Everything goes through as in
\cite[\S 3.2,pg.311]{marasingha}
where the dimension of the sieve is $\kappa=4$.

Condition $\l(\text{C}\r)$:
It appears that
the argument justifying
the analogous result
in
\cite[pg.314]{marasingha}
is not sufficiently detailed.
More specifically,
for values of $k$
that are small 
in comparison to
$Y$
the term
$\frac{Y^{1/2}2^Q}{k}$
in line $17$
has order of magnitude which is larger than the claimed bound
for $E\l(k\r)$
in line $21$.
Correcting this
inaccuracy
we get
\[E\l(k\r)
\ll
Q^{\nu}
\Big(2^Q
+\frac{\l(2^QY\r)^{1/2}}{k}
+
\frac{Y^{1/2}2^Q}{k}
\Big),
\]
for some absolute constant
$\nu$. Therefore
condition (C)
is satisfied
with
$\alpha=1/2-\eta$,
for any arbitrarily small and positive value of
$\eta$.

Condition $\l(\text{D}\r)$:
Follows directly
from
\eqref{coprim}.

Condition $\l(\text{E}\r)$:
Every element $a' \in \mathcal{A}'$
satisfies
$
a'
\ll
B^5,
$
where
the implied constant depends on
$F$ and $\mathcal{R}$.
Therefore there exists
$\mu>\frac{5}{\alpha}$,
which depends on
$F,\mathcal{R}$
and $\eta$,
such that
\[
|a'|
\leq Y^{\alpha \mu}
.
\]
Hence the conditions of
theorem
\cite[Th.3]{marasingha} are fulfilled.

Furthermore,
arguing similarly as in~\cite[\S6]{sarnak}, we obtain that one may take any value of $r$ larger than
$
m(\lambda):=
4\log \beta_4+
(5-\frac{1}{\beta_4}+\log\beta_4)\lambda-4\log\lambda-\lambda\log\lambda
$.
Note that
by~\cite[App.III]{DHR}
we have
$
\beta_4=9.0722\dots \ .
$
 We thus deduce
that
\[
\min_{0<\lambda<\beta_4}m(\lambda)=m(0.606519\dots)=15.4274522\dots
\]
and therefore
with the value
$r=16$
one has
\[
\#
\Big\{
(u,v) \in \Z^2 \cap \mathcal{R}B,
\Omega(\Phi'(u,v))\leq r
\Big\}
\gg
\frac{B^2}{\left(\log B\right)^4},
\]
as $B\to \infty$.
The
pairs of integers
$(u,v)\in \mathcal{R}B$
with
$\Omega(uv)
\leq
1$
are
such that $|uv|$ is $0,1$ or a prime,
thereby showing that
there are at most $\ll B$
such pairs.
We conclude that
\[
\#
\Big\{
(u,v) \in \Z^2 \cap \mathcal{R}B,
\Omega(\Phi'(u,v))\leq 16
\
\text{and}
\
\Omega(uv)
\geq 2
\Big\}
\gg
\frac{B^2}{\left(\log B\right)^4},
\]
as $B\to \infty$.
This observation
coupled
with
the equality
$
\Omega(\Phi(u,v))=
2 \
\Omega(\Phi'(u,v))-\Omega(uv)
$
completes the proof of Theorem
\ref{weighted}.
\end{proof}

We continue by using Lemmas~\ref{666} and~\ref{weighted}  to ensure
that
the $\Q$-rational fibre
$\pi^{-1}\!\l([s\!:\!t]\r)$
is birationally equivalent
to $\P_\Q^1$
through
almost prime values.

The case corresponding to $n=2$ of~\cite[Th.1.1]{kollar} implies that
$\overline{X}\l(\R\r)=X$.
For any point
$\boldsymbol{\xi}=\l(\xi_0,\ldots,\xi_3\r)\in X\l(\R\r)$
let us define
\begin{align*}
M_{\boldsymbol{\xi}}:&=
b\!\l(\xi_0,\xi_1\r)\!\xi_2+
e\!\l(\xi_0,\xi_1\r)\!\xi_3
\\
&=-
a\!\l(\xi_0,\xi_1\r)\!\xi_2^2
-
d\!\l(\xi_0,\xi_1\r)\!\xi_2\xi_3
-
f\!\l(\xi_0,\xi_1\r)\!\xi_3^2
\end{align*}
and note that
the points satisfying
$ \xi_0 \xi_1 M_{\boldsymbol{\xi}} \neq 0$
are also Zariski dense.
Hence
in order to prove Zariski
density of almost prime points
it suffices
to prove that
given
any such
$\boldsymbol{\xi}$
and
any $\epsilon>0$
there exists
$B \in \mathbb{Q}^\ast$
and
$\b{x} \in \Z^4$
with $\Omega\l(|x_0\ldots x_3|\r)\leq 34$
such that $[\b{x}] \in X$
and
\begin{equation}
\label{wa}
\max_{0\leq i \leq 3}\Big|
\frac{x_i}{B}
-\xi_i
\Big|
<\epsilon.
\end{equation}

Let $\delta \in \l(0,1\r)$
be a constant
to be specified in due course
and consider the following box of $\R^2$,
\[\mathcal{B}:=
[\xi_0-\delta,
\xi_0+\delta]
\!
\times
\!
[\xi_1-\delta,
\xi_1+\delta].
\]
The prime number theorem for arithmetic progressions
\cite[\S 5.6]{kowalski}
 supplies
a large positive number $P$
such that
the box
$P \mathcal{B}$
contains
a pair of, not necessarily positive, primes
$(s,t)$
lying in the progressions $\mod W$
specified by Lemma~\ref{gauss}.
A standard application of the mean value theorem
yields
\begin{equation}
\label{meanvalue}
\Big|
b\!\l(\frac{s}{P},\frac{t}{P}\r)\!\xi_2+
e\!\l(\frac{s}{P},\frac{t}{P}\r)\!\xi_3
-M_{\boldsymbol{\xi}}
\Big|
\ll_{X,\boldsymbol{\xi}}
\delta
\end{equation}
and
\begin{equation}
\label{meanvalue2}
\Big|
a\!\l(\frac{s}{P},\frac{t}{P}\r)\!\xi_2^2+
d\!\l(\frac{s}{P},\frac{t}{P}\r)\!\xi_2\xi_3+
f\!\l(\frac{s}{P},\frac{t}{P}\r)\!\xi_3^2+
M_{\boldsymbol{\xi}}
\Big|
\ll_{X,\boldsymbol{\xi}}
\delta,
\end{equation}
where the implied constant in both bounds is independent of $P$ and $\delta$.
Let
\[
\mathcal{R}:=
\Big[
\frac{\xi_2-\delta}{P \sqrt{M_{\boldsymbol{\xi}}}}
,
\frac{\xi_2+\delta}{P \sqrt{M_{\boldsymbol{\xi}}}}
\Big]
\!
\times
\!
\Big[
\frac{\xi_3-\delta}{P \sqrt{M_{\boldsymbol{\xi}}}}
,
\frac{\xi_3+\delta}{P \sqrt{M_{\boldsymbol{\xi}}}}
\Big].
\]
Applying
Lemma~\ref{666}
or
Lemma~\ref{weighted}
respectively
according to as if $\l(d^2-4af\r)\!\l(s,t\r)$
is an integer square or not,
provides
integers
$(u,v) \in B^{\frac{1}{2}}
\mathcal{R}
$
such that
$
uv
H^2_{s,t}\l(u,v\r)
G^2_{s,t}\l(u,v\r)
$
has respectively at most 
$8$ or
$30$ prime factors counted with multiplicity.
Define $\b{x} \in \Z^4$
by
\begin{align*}
x_0&=-s H_{s,t}(u,v), \\
x_1&=-t H_{s,t}(u,v), \\
x_2&=u \ G_{s,t}(u,v), \\
x_3&=v \ G_{s,t}(u,v).
\end{align*}
A further application of the mean value theorem
and a use of
\eqref{meanvalue}-
\eqref{meanvalue2}
shows that
\begin{align*}
\max_{0 \leq i \leq 3}
\Big|
\frac{x_i}{B}-\xi_i
\Big|
\ll_{X,\boldsymbol{\xi}}\delta,
\end{align*}
thereby proving
\eqref{wa}
for an
appropriate
$\delta:=\delta\l(\epsilon,X,\boldsymbol{\xi}\r).$
\begin{remark}
\label{rem:greentaozieg}
It is worth pointing out that 
a special corollary of the work in this section
is
that   whenever 
$a_i,e_i,f_i,(i=0,1)$
$b_j,d_j(j=0,1,2)$ are 
$12$
integers with 
\[
\underset{i=0,1}{\gcd}(a_i,d_i,f_i)=
\underset{j=0,1,2}{\gcd} (b_i,e_i)=1,
\gcd(6,a_0b_0)=1,
6 \mid \gcd(a_1,b_1,b_2),
6 \mid \underset{i=0,1}{\gcd}(d_i,e_i,f_i)\]
and the binary 
integer quadratic form
\[
(d_0s+d_1 t)^2-4(a_0s+a_1 t)(f_0s+f_1 t) 
\]
is the square of a polynomial in $\Q[s,t]$,
then the cubic surface $X$
given by the vanishing of 
\begin{align*}
&(a_0 x_0+a_1 x_1)x_2^2   +    (d_0 x_0+d_1 x_1)x_2 x_3  +   (f_0 x_0+f_1 x_1)x_3^2 
\\ &
+(b_0 x_0^2+b_1 x_0 x_1+b_2 x_0^2) x_2 +(e_0 x_0^2+e_1 x_0 x_1+e_2 x_0^2) x_3 
\end{align*}
is smooth and has
saturation number 
\[
r(X)\leq 10.
\]
It is easily seen that~\eqref{ter}
constitutes a special case of the surfaces alluded to 
in the present remark.
This 
proves the claim regarding the saturation of the surface in~\eqref{ter}
that was
made in the introduction.
\end{remark}

\section{The proof of Theorem~\ref{threefold}}
The Fermat cubic threefold
is 
equipped with a conic bundle over $\P^2_\Q$
and 
we begin by describing its structure.
The change of variables
\[
y_0=x_0,
y_1=\frac{x_1+x_3}{2},
y_2=\frac{x_2+x_4}{2},
y_3=\frac{x_1-x_3}{2},
y_4=\frac{x_2-x_4}{2},
\]
transforms the hypersurface
\[
X:
x_0^3+
x_1^3+
x_2^3+
x_3^3+
x_4^3=0
\]
into
\begin{equation}
\label{espresso}
Y: y_0^3+2y_1\l(y_1^2+3y_3^2\r)+2y_2\l(y_2^2+3y_4^2\r)=0.
\end{equation}
Let us record
here that any $y \in Y\l(\R\r)$
has
$y_0,y_1,y_2$ not of the same sign.
Projecting away from the line
$y_0=y_1=y_2=0$
contained in $Y$
provides
a conic bundle
morphism
$\pi:Y \to \P_{\Q}^2$.
The transformation
$\b{y}=(rx,sx,tx,y,z)$
shows that the fibres
$Q_{r,s,t}=0$
are
given by
\[x^2\l(r^3+2s^3+2t^3\r)
+y^2\l(6s\r)
+z^2\l(6t\r)
=0.
\]

We wish to
construct an infinite family of conic fibres
each member of which is $\Q$-rational and with a
parametrisation for their rational points
which is easy to describe.
We focus on the conics given by
$Q_
{p_1,-p_2^2,p_3^2}
=0$,
where $p_i$ are primes.
Note
that they contain the obvious point
$(0,p_3,p_2)$,
thus providing the
parametrisation  
\begin{align*}
&x=12p_2 p_3^2 uv, \\
&y=6p_3^3u^2+p_3\l(p_1^3-2p_2^6+2p_3^6\r)v^2,\\
&z=-6p_2p_3^2u^2+p_2\l(p_1^3-2p_2^6+2p_3^6\r)v^2.
\end{align*}
Tracing the substitutions backwards
allows us to conclude that the points
$(x_0,\ldots x_4)$
given by
\[
\l(
12p_1p_2p_3^2f_0\!\l(u,v\r),
p_3 f_1\!\l(u,v\r),
p_3 f_2\!\l(u,v\r),
p_2 f_3\!\l(u,v\r),
p_2 f_4\!\l(u,v\r)
\r)
\]
lie in the threefold $X$,
where
the binary integer forms
$f_j \in \Z[u,v]$
are defined through
\begin{align*}
&f_0:=uv,\\
&f_1:=6p_3^2u^2-12p_2^3p_3uv+\l(p_1^3-2p_2^6+2p_3^6\r)v^2,\\
&f_2:=-6p_3^2u^2-12p_2^3p_3uv-\l(p_1^3-2p_2^6+2p_3^6\r)v^2,\\
&f_3:=-6p_3^2u^2+12p_3^4uv+\l(p_1^3-2p_2^6+2p_3^6\r)v^2,\\
&f_4:=6p_3^2u^2+12p_3^4uv-\l(p_1^3-2p_2^6+2p_3^6\r)v^2.\\
\end{align*}
Observe
that
all
$\b{x} \in \Z^5$
on $X$
satisfy
$7|\prod_{i=0}^4 x_i$.
We have
\[
\prod_{i=0}^4
x_i=84p_1p_2^3p_3^4\l(\frac{1}{7}\prod_{i=0}^4f_i\l(u,v\r)\r).
\]
Now suppose that we are given a
point $\l(\xi_0,\ldots,\xi_4\r) \in X\l(\mathbb{R}\r)$
which we shall approximate.
The linear change of variables
\[\zeta_0=\xi_0,
\zeta_1=\frac{\xi_1+\xi_3}{2},
\zeta_2=\frac{\xi_2+\xi_4}{2},
\zeta_3=\frac{\xi_1-\xi_3}{2},
\zeta_4=\frac{\xi_2-\xi_4}{2}
\]
shows that
$\boldsymbol{\zeta} \in Y\!\l(\R\r)$
thus allowing us to assume without loss of generality that
\[
\zeta_0>0,\zeta_1<0,\zeta_2>0.
\]
owing to an earlier observation.

Now let $\delta$ be a small positive constant satisfying $0<\delta<1$.
For a large constant $C>0$, the
Siegel--Walfisz theorem
allows us to pick
primes $p_1,p_2,p_3$
satisfying
\begin{alignat}{2}
&p_i\equiv 1 \mod{q}
\
\text{for all}
\
i=1,2,3
\
\text{for all}
\
q=2,3,5,49, \label{progression} \\
&(p_1,-p_2^2,p_3^2) \in C\!\l((\zeta_0-\delta,\zeta_0+\delta)
\times (\zeta_1-\delta,\zeta_1+\delta) \times (\zeta_2-\delta,\zeta_2+\delta)\r), \label{prog0}\\
&p_3 \nmid p_1^3-2p_2^6, \label{prog1}\\
&p_1^3-8p_2^6+2p_3^6 \neq 0, \label{prog2}\\
&p_3\l(p_1^3-2p_2^6+2p_3^6\r)
\neq 0, \label{prog3}
\end{alignat}
With this choice of $p_i$
we shall
apply the weighted sieve,
as in the proof of Lemma~\ref{weighted},
in order to deduce that the form
\[F(u,v):=\frac{1}{7}\prod_{i=0}^4f_i\l(u,v\r)\]
attains almost prime values infinitely often.
Firstly, for $i=1,\ldots,4$,
each
$f_i$
is irreducible,
\[\text{disc}(f_1)=
\text{disc}(f_2)=24p_3^2\l(-p_1^3
+8p_2^6-2p_3^6\r).
\] The condition~\eqref{progression}
yields
$-p_1^3
+8p_2^6-2p_3^6\equiv 2 \mod{3}$ and hence
$\text{disc}(f_1)\neq 0$. Furthermore the same condition gives
$3\|\text{disc}(f_1)$,
i.e. $f_1$ and $f_2$ are irreducible over $\Q$.
A similar argument shows that the same holds true for both $f_3$ and $f_4$.
We next deduce that for $i=1,\ldots,4$
since the assumptions
$p_i\equiv 1 \mod{6},i=1,2,3$
yield that
\[\gcd\l(6,p_1^3-2p_2^6+2p_3^6\r)=1\]
and it suffices to note that
$p_3\nmid
\l(p_1^3-2p_2^6\r).$
Condition (C) is then proved as follows.
Let us define
\[
D:=
\text{rad}
\Big(
\Big(\prod_{p<10}p \Big)
\Big(\prod_{i=1}^{4} a_ic_i \text{disc}(f_i) \Big)
\Big( \prod_{1\leq i,j \leq 4}\text{Res}(f_i,f_j)  \Big)
\Big),
\]
where $a_i$
and
$c_i$
are
the coefficient of $u^2$
and
$v^2$
in $f_i$ respectively.
Note that~\eqref{prog3} guarantees that $u$ or $v$ are coprime forms to each $f_i, \forall i\neq 0$.
We need to find some $\b{z}$
such that
\[\l(u,v\r)\equiv \b{z} \mod{D}\]
in order to have
\begin{equation}
\label{need}
\gcd\l(\frac{1}{7}\prod_{i=0}^4f_i(u,v),D\r)=1.
\end{equation}
For the primes $p=2,3,5,7$ we make the choices
for $\l(u,v\r) \mod{p}$ given by
\[\l(1,1\r),
\l(1,1\r),
\l(1,2\r),
\l(1,1\r)
\]
respectively.
For the primes $p\geq 11$ we have the estimate
\[
\sharp\{u,v \in \l(\Z/p\Z\r)^*\!,p|\prod_{i=0}^4f_i(u,v)\}
\leq 8(p-1),
\]
which implies that we can pick an admissible vector $\l(u,v\r)\mod{p}$.
Alluding to the Chinese remainder theorem
we deduce that there exists $\b{z}$ satisfying~\eqref{need} 
thus allowing any value $r$ satisfying
\[r>
(3+6\log \beta_6)+(10-\frac{4}{\beta_6}+\log\beta_6)\lambda-6\log\lambda-\lambda\log\lambda:=m\l(\lambda\r)
\]
to work.
Note that condition $Q_0$ is verified as previously, the key element of this verification being that each $f_i,i\neq 0$ is
irreducible, and that $f_i,f_j$ are coprime for $i\neq j$.
Indeed since $f_i,f_j$ are coprime as polynomials, there are non-zero integers $R_{i,j}$
such that if $p|f_i(u,v)$ and $p|f_j(u,v)$ then $p|R_{i,j}u$ and $p|R_{i,j}v$.
Hence
\[
\#\{u,v \sim B:p|(f_i(u,v),f_j(u,v)\}
\ll
\#\{u,v \sim B:p|(u,v)\}
\ll
\frac{B^2}{p^2}+1
.
\]
Using ~\cite[App.III]{DHR}
we
deduce
that
\[
\min_{0<\lambda<\beta_6}m(\lambda)=m(0.4978357377\dots)=29.1527037101
\dots,
\]
so we can
choose $r=30.$

In order to finalise the Zariski density argument we need to prove that
$
|x_i/B-\xi_i|<\epsilon
$,
for which it suffices to establish that
$
|y_i/B-\zeta_i|<\epsilon/2$. 
To this end suppose that $(\alpha,\beta)$
is the solution to the system
\[\alpha \beta=\frac{1}{12\sqrt{-\zeta_1}\zeta_3}
\ 
\text{ and   } \ 
6\zeta_3 \alpha^2
+\l(\zeta_0^3+2\zeta_1^3+2\zeta_3^3\r)
\beta^2
=\frac{\zeta_2}{\sqrt{\zeta_3}}.
\]
Then the choices
\[
u \in \l(\l(\alpha-\delta\r)C^{-\frac{3}{4}}B^{\frac{1}{2}},\l(\alpha+\delta\r)C^{-\frac{3}{4}}B^{\frac{1}{2}}\r)
\ \text{ and } \ 
v \in \l(\l(\beta-\delta\r)C^{-\frac{7}{4}}B^{\frac{1}{2}},\l(\beta+\delta\r)C^{-\frac{7}{4}}B^{\frac{1}{2}}\r)
\]
make the inequality
$|
y_i(u,v)/B-\zeta_i|
<\delta
\
M_i\!\l(\boldsymbol{\xi}\r)
$
available.
We deduce that the saturation number satisfies
\[r(F)\leq 42.\]
The final stage 
of the 
proof of 
Theorem~\ref{threefold}
is similar 
to that of 
Theorem~\ref{2 skew}
and is therefore omitted.


\begin{thebibliography}{99}

 

\bibitem{bourg}
J. Bourgain, A. Gamburd and P. Sarnak,
Affine linear sieve, expanders, and sum-product.
{\em Invent. Math.} {\bf 179} (2010), 559--644.

\bibitem{brandes}  
J. Brandes,
Linear spaces on hypersurfaces over number fields.
\texttt{arXiv:1610.08863}, (2016).


\bibitem{birch}
J. Br{\"u}dern, R. Dietmann, J. Y. Liu, T. D. Wooley,
A {B}irch-{G}oldbach theorem.
{\em Arch. Math. (Basel)} {\bf 94} (2010), 53--58.

\bibitem{MR3229043}
T. D. Browning and P. Vishe,
Cubic hypersurfaces and a version of the circle method for
              number fields.
{\em Duke Math. J.} {\bf 163} (2014), 1825--1883.

\bibitem{MR3361770}
\bysame,
Rational points on cubic hypersurfaces over {$\Bbb{F}_q(t)$}.
{\em Geom. Funct. Anal.} {\bf 25} (2015), 671--732.
 
\bibitem{MR0434997}
J. R. Chen, 
On the representation of a larger even integer as the sum of a
              prime and the product of at most two primes.
{\em Sci. Sinica} {\bf 16} (1973), 157--176.


\bibitem{MR3279535}
B. Cook and \'A. Magyar,
Diophantine equations in the primes.
{\em Invent. Math.} {\bf 198} (2014), 701--737.

\bibitem{MR2527970}
A. Conte, M. R. Marchisio and J. P. Murre, 
On the unirationality of the quintic hypersurface containing a
              3-dimensional linear space.
{\em Atti Accad. Sci. Torino Cl. Sci. Fis. Mat. Natur.} {\bf 142} (2008), 89--96.

 

\bibitem{applic}
H. Diamond and H. Halberstam,
Some applications of sieves of dimension exceeding 1,
{\em Sieve methods, exponential sums, and their applications in
              number theory ({C}ardiff, 1995)}.
Cambridge Univ. Press, Cambridge (1997).

 
 
\bibitem{sievebook}
\bysame
{\em A higher-dimensional sieve method}.
Cambridge University Press, Cambridge, (2008).

  
 
\bibitem{DHR}
H. Diamond, H. Halberstam and H.-E. Richert,
Combinatorial sieves of dimension exceeding one.
{\em J. Number Theory} {\bf 28} (1988), 306--346.

\bibitem{elkies}  
N. Elkies,
Complete cubic parametrization of the {F}ermat cubic surface.
\texttt{http://www.math.harvard.edu/~elkies/4cubes.html}.

 
\bibitem{MR3512880}
C. Frei and E. Sofos,
Counting rational points on smooth cubic surfaces.
{\em Math. Res. Lett.} {\bf 23} (2016), 127--143.

 
\bibitem{t1}
B. Green and T. Tao,
Linear equations in primes.
{\em Ann. of Math. (2)} {\bf 171} (2010), 1753--1850. 

\bibitem{t2}
\bysame,
The {M}\"obius function is strongly orthogonal to
              nilsequences.
{\em Ann. of Math. (2)} {\bf 175} (2012), 541--566.

 

\bibitem{t3}
B. Green, T. Tao and T. Ziegler,
An inverse theorem for the {G}owers {$U^{s+1}[N]$}-norm.
{\em Ann. of Math. (2)} {\bf 176} (2012), 1231--1372.

\bibitem{MR2445243}
G. H. Hardy and E. M. Wright, 
{\em An introduction to the theory of numbers}.
Oxford University Press, Oxford, Sixth edition,
(2008).


\bibitem{MR1646558}
J. Harris, B. Mazur and R. Pandharipande,
Hypersurfaces of low degree.
{\em Duke Math. J.} {\bf 95} (1998), 125--160.

 
\bibitem{kowalski}
H. Iwaniec and E. Kowalski, {\em Analytic number theory}.
American Math.\ Soc.\, Providence, RI, (2004).

 
\bibitem{kollar}
J. Koll{\'a}r, 
Unirationality of cubic hypersurfaces.
{\em J. Inst. Math. Jussieu} {\bf 1} (2002), 467--476.



\bibitem{MR1829558}
K. Kawada and T. D. Wooley,
On the {W}aring-{G}oldbach problem for fourth and fifth
              powers.
{\em Proc. London Math. Soc.} {\bf 83} (2001), 1--50.

 

\bibitem{sarnak}
J. Liu and P. Sarnak,
Integral points on quadrics in three variables whose
              coordinates have few prime factors.
{\em Israel J. Math.} {\bf 178} (2010), 393--426.


 

\bibitem{langweil}
S. Lang and A. Weil,
Number of points of varieties in finite fields.
{\em Amer. J. Math.} {\bf 76} (1954), 819--827.



\bibitem{MR0166188}
K. Mahler, 
An inequality for the discriminant of a polynomial.
{\em Michigan Math. J.} {\bf 11} (1964), 257--262.



 
 
\bibitem{maninakos}
Y. I. Manin, 
{\em Cubic forms}.
North-Holland Publishing Co., Amsterdam, (1986).

 

 
\bibitem{MR1769987}
M. R. Marchisio,
Unirational quartic hypersurfaces.
{\em Boll. Unione Mat. Ital. Sez. B Artic. Ric. Mat. (8)} {\bf 3} (2000), 301--314.


 
\bibitem{marasingha}
G. Marasingha, 
Almost primes represented by binary forms.
{\em J. Lond. Math. Soc. (2)} {\bf 82} (2010), 295--316.


 
\bibitem{MR3656205}
\'A. Magyar and T. Titichetrakun,
Almost prime solutions to diophantine systems of high rank.
{\em Int. J. Number Theory} {\bf 13} (2017), 1491--1514.





\bibitem{MR2746350}
A. Nevo and P. Sarnak,
Prime and almost prime integral points on principal
              homogeneous spaces.
{\em Acta Math.} {\bf 205} (2010), 361--402.



 


\bibitem{robin}
G. Robin, 
Estimation de la fonction de {T}chebychef {$\theta $} sur le
              {$k$}-i\`eme nombre premier et grandes valeurs de la fonction
              {$\omega (n)$} nombre de diviseurs premiers de {$n$}.
{\em Acta Arith.} {\bf 42} (1983), 367--389.

 \bibitem{scso}  
D. Schindler and E. Sofos,
Sarnak's saturation problem for complete intersections.
\texttt{arXiv:1705.09133}, (2017).


 
\bibitem{shafa}
I. R. Shafarevich,
{\em Basic algebraic geometry. 1}.
Springer-Verlag, Berlin, (1994).

 


\bibitem{MR1119199}
C. L. Stewart, 
On the number of solutions of polynomial congruences and
              {T}hue equations.
{\em J. Amer. Math. Soc.} {\bf 4} (1991), 793--835.


\bibitem{MR0337951}
H. P. F. Swinnerton-Dyer,
Applications of algebraic geometry to number theory.
{\em 1969 {N}umber {T}heory {I}nstitute ({P}roc. {S}ympos. {P}ure
              {M}ath., {V}ol. {XX}, {S}tate {U}niv. {N}ew {Y}ork, {S}tony
              {B}rook, {N}.{Y}., 1969)}.
Amer. Math. Soc., Providence, R.I., (1971), 1--52.

 
\bibitem{MR3641655}
K.-M. Tsang and L. Zhao,
On {L}agrange's four squares theorem with almost prime
              variables.
{\em J. reine angew. Math.} {\bf 726} (2017), 129--171.


\bibitem{vinivedi}
I. M. Vinogradov,
Representation of an odd number as a sum of three primes.
{\em C. R. Acad. Sci. URSS} {\bf 15} (1937), 169--172.



\bibitem{MR3360328}
Y. Wang,
On the saturation number for cubic surfaces.
{\em J. Number Theory} {\bf 156} (2015), 52--74.



 
\bibitem{xiya}  
S. Y. Xiao and S. Yamagishi,
Zeroes of polynomials in many variables with prime inputs.
\texttt{arXiv:1512.01258}, (2015).

 

\bibitem{yama}  
S. Yamagishi,
Prime solutions to polynomial equations in many variables and differing degrees.
\texttt{arXiv:1703.03332}, (2017).

 
 
\bibitem{yamag}  
\bysame,
Diophantine equations in semiprimes.
\texttt{arXiv:1709.03605}, (2017).

 
 
\end{thebibliography}
\end{document}